\newtheorem{thm}{Theorem}[section]
\newtheorem{cor}[thm]{Corollary}
\newtheorem{lem}[thm]{Lemma}
\newtheorem{prop}[thm]{Proposition}
\newtheorem{defn}[thm]{Definition}
\newtheorem{rem}{Remark}
\theoremstyle{remark}
\numberwithin{equation}{section}
\newcommand{\al}{\alpha}
\def\sph{\mathbb{S}^{d}}
\def\vz{\varepsilon}
\def\lz{\lambda}
\def\Lz{\Lambda}
\def\Oz{\Omega}
\def\dz{\delta}
\def\az{\alpha}
\def\gz{\gamma}
\def\Gz{\Gamma}
\def\sz{\sigma}
\def\f{\frac}
\def\({\Bigl(}
\def \){ \Bigr)}
\def\Ga{\Gamma}
\def\sub{\substack}
 \def\RR{{\mathbb R}}
\def\sz{\sigma}
\def\ss{{\Bbb S}^{d}}
\def\x{{\bf x}}
\def\y{{\bf y}}
\begin{document}
\def\RR{\mathbb{R}}
\def\Exp{\text{Exp}}
\def\FF{\mathcal{F}_\al}

\title[] { Optimal  quadrature errors and sampling numbers for Sobolev spaces with logarithmic perturbation on spheres}

\author{Jiaxin Geng} \address{ School of Mathematical Sciences, Capital Normal
University, Beijing 100048,
 China.}
 \email{gengjiaxin1208@163.com}

 \author{Yun Ling} \address{ School of Mathematical Sciences, Capital Normal
University, Beijing 100048,
 China.}
 \email{18158616224@163.com}

\author{Jiansong Li} \address{ School of Mathematical Sciences, Capital Normal
University, Beijing 100048,
 China}
\email{2210501007@cnu.edu.cn}

\author{Heping Wang} \address{ School of Mathematical Sciences, Capital Normal
University,
Beijing 100048,
 China.}
\email{wanghp@cnu.edu.cn}

\keywords{Worst-case error,
Numerical integration,
Quadrature formulas, Reproducing kernel Hilbert spaces} \subjclass[2010]{41A63; 65C05;
65D15;  65Y20}

\begin{abstract}
In this paper, we study
 optimal quadrature errors, approximation numbers, and sampling numbers in $L_2(\ss)$ for  Sobolev spaces ${\rm
H}^{\az,\beta}(\ss)$  with logarithmic perturbation on the unit
sphere $\ss$ in $\Bbb R^{d+1}$.  First we obtain strong
equivalences of the approximation numbers for ${\rm
H}^{\az,\beta}(\ss)$ with $\az>0$, which gives a clue to Open
problem 3 as posed by Krieg and Vyb\'iral in \cite{KV}. Second,
for  the optimal quadrature errors for ${\rm H}^{\az,\beta}(\ss)$,
we use  the ``fooling" function technique to get lower bounds in
the case  $\az>d/2$, and apply Hilbert space structure and
Vyb\'iral's theorem about Schur product theory to obtain lower
bounds in the case $\az=d/2,\,\beta>1/2$ of small smoothness,
which confirms the conjecture as posed by Grabner and Stepanyukin
in \cite{GS} and solves Open problem 2 in \cite{KV}. Finally, we
employ
 the weighted
least squares operators and the least squares quadrature rules to
obtain approximation theorems and quadrature errors for ${\rm
H}^{\az,\beta}(\ss)$ with $\az>d/2$ or $\az=d/2,\,\beta>1/2$,
which are order optimal.

.

\end{abstract}

\maketitle
\input amssym.def

\section{Introduction and main results}

\

For $d\ge 2$, denote $\Bbb S^d=\{\x\in \Bbb R^{d+1}
|\,\|\x\|=(x_1^2+\cdots +x_{d+1}^2)^{1/2}=1\}$ by the unit sphere.
 The integral of a continuous function $f:{\Bbb
S}^{d}\to \Bbb R $ is denoted by
$${\rm INT}(f):=\int_{{\Bbb S}^{d}}f({\bf{x}})\,  d\sigma_{d}({\bf x}) ,$$
where $\sigma_{d}(\bf x)$ is the normalized surface measure on
${\Bbb S}^{d}$ (i.e., $\sigma_{d}({\Bbb S}^{d})=1$). Denote by
$L_p\equiv L_p(\ss)$ the usual Lebesgue space with norm
$\|\cdot\|_{L_p}$. Then $L_2$ is a Hilbert space with   inner
product
$$\langle f,g\rangle=\int_{\ss}f(\x)g(\x)d\sz_{d}(\x).$$

Let $H$ be a reproducing kernel Hilbert space on $\ss$ with norm
$\|\cdot\|_H$. We want to approximate ${\rm INT}(f)$ from $f\in H$
using quadrature formulas
$$Q_{{\bf c},X_{n}}=\sum_{j=1}^{n}c_{j}f({\bf x}_{j})$$ with weights ${\bf
c}\in{\Bbb R}^{n}$ and knots $X_{n}=\{{{\bf x}_{1}},{{\bf
x}_{2}},\cdot \cdot \cdot, {{\bf x}_{n}}\}\subset {\Bbb S}^{d} $.

\
The worst case error of $Q_{c,X_{n}}$ is defined by
$$e(Q_{{\bf c},X_{n}},H,{\rm INT}):=\sup_{\|f\|_{H}\leq 1}|{\rm INT}(f)-Q_{{\bf c},X_{n}}(f)|,$$
and the $n$-th optimal  quadrature error
$$e_n(H,{\rm INT}):=\inf_{\sub{{\bf c}\in\Bbb R^n\\ X_n\subset\Bbb S^d}}e(Q_{{\bf c},X_{n}},H,{\rm INT})=\inf_{\sub{{\bf c}\in\Bbb R^n\\ X_n\subset\Bbb S^d}}\sup_{\|f\|_{H}\leq 1}|{\rm INT}(f)-Q_{{\bf c},X_{n}}(f)|.$$
We write $$e_n(H)=e_n(H,{\rm INT}).$$

We  are also interested in  approximation numbers and (linear)
sampling numbers in $L_2$. We  define the approximation numbers
$a_n(H)$ by
$$a_{n}(H)\equiv a_{n}(H,L_2) :=\inf_{\sub{L_1,...,L_n \in H^{'}\\ g_1,...,g_{n}\in L_2}}\sup_{\|f\|_{H}\leq 1}\|f-\sum_{i=1}^{n}L_{i}({f})g_{i}\|_{L_2},$$
and the (linear) sampling  numbers $g_n(H)$ by
$$g_{n}(H)\equiv g_{n}^{\rm lin}(H,L_2):=\inf_{\sub{{\bf x}_{i},...,{\bf x}_{n}\in
 {\Bbb S}^{d}\\ g_1,...,g_n\in L_2}}\sup_{\|f\|_{H}\leq 1}\|f-\sum_{i=1}^{n}f({\bf x}_{i})g_{i}\|_{L_2},$$
where $H'$ denotes the conjugate space of $H$ consisting of all
continuous linear functionals on $H$. The approximation numbers
and the (linear) sampling numbers  reflect the errors of the best
possible linear algorithms for $L_{2}$ approximation using at most
$n$ continuous linear functionals on $H$ or $n$ function values,
respectively. Clearly, we have
\begin{equation}\label{1.1} a_{n}(H)\leq g_{n}(H)\ \ \ {\rm and}\
\ \ e_{n}(H)\leq g_{n}(H).\end{equation} By \eqref{1.1} it
suffices to discuss lower bounds for $e_n(H)$ and upper bounds for
$g_n(H)$.

This paper is devoted to discussing the optimal quadrature error
and the sampling numbers for the Sobolev spaces ${\rm
H}^{\az,\beta}(\ss)$ with logarithmic perturbation (see the
definition in Section 2). Note that ${\rm H}^{\az,\beta}(\ss)$ is
a reproducing kernel Hilbert space if and only if
$\az>d/2,\beta\in\Bbb R$ or $\az=d/2, \beta>1/2$. Moreover,  this
space is just the Besov space $B_{2,2}^\Omega(\ss)$ (see
\cite{WT,WW1}) of generalized smoothness with equivalent norms,
where $\Omega(t)=t^\az(1+(\ln\frac1t)_+)^\beta$ for $t>0$,
$a_+=\max\{a,0\}$, see Subsection 2.1.

For the classical Sobolev spaces ${\rm H}^\az(\ss)={\rm
H}^{\az,0}(\ss)$ on $\ss$, upper and lower bounds for the optimal
quadrature error were obtained in \cite{BH} and \cite{H1},
asymptotics and strong equivalences of the approximation numbers
in \cite{K} and \cite{CW}, and asymptotics of the sampling numbers
in \cite{WW1}. For the Sobolev spaces ${\rm
H}^{\alpha,\beta}(\ss)$ with logarithmic perturbation, asymptotics
of the approximation numbers were obtained in \cite{WT} for
$\az>0$, asymptotics  of the sampling numbers in \cite{WW1} for
$\az>d/2$. Specifically, it was shown in \cite{WT} that for
$\az>0$,\begin{equation}\label{1.3-4} a_n({\rm
H}^{\az,\beta}(\ss))\asymp n^{-\az/d}\ln^{-\beta}n,\end{equation}
and in \cite{WW1} that for $\az>d/2$,
\begin{equation}\label{1.3-0} g_n({\rm H}^{\az,\beta}(\ss))\asymp
n^{-\az/d}\ln^{-\beta}n,\end{equation} where for positive
sequences $\{a_n\}, \{b_n\}$, $a_n \lesssim b_n$ means that there
is a constant $c> 0$ independent of $n$ such that $a_n \leq c b_n
$ for all $n\ge 2$. We write  $a_n \asymp b_n$ if $b_n \lesssim
a_n$ and $a_n \lesssim b_n $.

In the case $\az=d/2,\, \beta>1/2$ of small smoothness,
 Grabner and Stepanyuk  obtained in \cite{GS}, among others, upper and lower bounds for the
 optimal quadrature errors:
 $$n^{-1/2}\ln^{-\beta}n\lesssim
 e_n({\rm H}^{d/2,\beta}(\ss))\lesssim
 n^{-1/2}\ln^{-\beta+1/2}n.$$

  However, there is a logarithmic gap between the upper and lower
 bounds. They conjectured in \cite{GS} that the correct order for
 $e_n({\rm H}^{d/2,\beta}(\ss))$ is just the upper bound. Also
 Krieg and Vyb\'iral posed Open problem 2  in \cite{KV}, which stated that the existing lower bound can be improved.

There has been an increased interest in the comparison of standard
information given by function values and general linear
information for $L_2$ approximation problem. We refer to
\cite{DKU, KUV, KU1, KU2, NSU, T}
 for recent upper bounds and to \cite{HKNV1, HKNV2} for lower
 bounds. Specifically, applying random information,
weighted least squares method,  and infinite-dimensional variant
of subsampling strategy, the authors in \cite{DKU}  obtained  that
there are two absolute constants $c$, $C>0$, such that for every
reproducing
 kernel Hilbert space $H$ and every $n\in\Bbb N$, \begin{equation}\label{1.2}  g_n(H)\le
 C\Big(\sum_{k\ge cn}a_n(H)^2\Big)^{1/2}.\end{equation}
 By \eqref{1.3-4} and
\eqref{1.2} we have for $\beta>1/2$,
\begin{equation}\label{1.3} g_n({\rm H}^{d/2,\beta}(\ss))\lesssim
n^{-1/2}\ln^{-\beta+1/2}n.\end{equation}This give the upper bound
for $g_n({\rm H}^{d/2,\beta}(\ss))$. However, the algorithm used
in \cite{DKU} is not constructive (see \cite[Remark 7]{DKU}).

For constructive algorithms, it was shown in \cite{WW1} that the
filtered hyperinterpolation operators (see the definition in
\cite{WW1}) are asymptotically optimal linear algorithms for
$g_n({\rm H}^{\az,\beta}(\ss))$ with $\az>d/2$. Lu and Wang showed
in \cite{LW} that the weighted least squares operators (see
Subsection 2.3) are asymptotically optimal linear algorithms for
$g_n({\rm H}^{\az}(\ss))$ with $\az>d/2$.

In this paper, we investigate  the approximation numbers, the
sampling numbers, and the optimal quadrature error for the Sobolev
spaces ${\rm H}^{\az,\beta}(\ss),\, \az>0$ with generalized
smoothness. We make special attention to the case
$\az=d/2,\,\beta>1/2$.

 First, we give strong equivalences of the
approximation numbers $a_n({\rm H}^{\az,\beta}(\ss)),\ \az>0$
which generalized the result of \cite[Theorem 3.1]{CW}. Such
result gives a clue to Open problem 3 posed in \cite{KV}.

\begin{thm}\label{thm1.1}Let $\az>0,\beta\in\Bbb R$.
Then\begin{equation}\label{1.5} \lim_{n\to\infty}n^{\az/d}(\ln
n)^{\beta}a_n(
{\rm H}^{\az,\beta}(\ss))=\Big(\frac2{d\,!}\Big)^{\az/d}d^\beta.\end{equation}
\end{thm}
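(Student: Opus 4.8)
The plan is to compute the exact asymptotic constant for the approximation numbers by exploiting the diagonal structure of the space $\mathrm{H}^{\az,\beta}(\ss)$ with respect to the spherical harmonic basis. Since $\mathrm{H}^{\az,\beta}(\ss)$ is a reproducing kernel Hilbert space whose norm is equivalent to a weighted $\ell_2$ norm on spherical harmonic coefficients, the operator embedding $\mathrm{H}^{\az,\beta}(\ss)\hookrightarrow L_2(\ss)$ is diagonal. Consequently the approximation numbers $a_n$ are precisely the singular values of this embedding, which are the reciprocals of the square roots of the eigenvalues of the norm weights, arranged in nonincreasing order. So the first step is to write down these singular values explicitly: for each degree $k$ the eigenspace of spherical harmonics has dimension $Z(d,k)=\dim\CH_k^d$, and the associated singular value is governed by $\Omega(k)^{-1}$ where $\Omega$ encodes the $k^{-\az}(\ln k)^{-\beta}$ behaviour. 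Thus $a_n$ equals the $n$-th largest element in a multiset where the value $\approx k^{-\az}(\ln k)^{-\beta}$ is repeated $Z(d,k)$ times.

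Second, I would convert this order-statistics problem into a counting problem. To pin down $a_n$ asymptotically I need to know, for a given threshold $\lambda$, how many singular values exceed $\lambda$; equivalently I invert the cumulative dimension count $N(K)=\sum_{k\le K}Z(d,k)$. The key classical asymptotic here is $Z(d,k)\sim \tfrac{2}{d!}k^{d-1}$ and hence $N(K)\sim \tfrac{2}{d!}\,\tfrac{K^d}{d}=\tfrac{2}{d\cdot d!}K^d$. Setting $n\approx N(K)$ gives $K\approx (d\cdot d!\, n/2)^{1/d}$, and substituting this into the value $k^{-\az}(\ln k)^{-\beta}$ at $k=K$ produces the leading term. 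The logarithmic factor requires care: since $\ln K \sim \tfrac1d\ln n$, the factor $(\ln K)^{-\beta}$ contributes $(\tfrac1d\ln n)^{-\beta}=d^\beta(\ln n)^{-\beta}$, which is exactly where the constant $d^\beta$ originates. Combining $K^{-\az}\sim\bigl(\tfrac{2n}{d\cdot d!}\bigr)^{-\az/d}$ with the logarithmic factor and multiplying through by $n^{\az/d}(\ln n)^\beta$ yields the claimed limit $\bigl(\tfrac{2}{d!}\bigr)^{\az/d}d^\beta$.

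The main obstacle, and where the real work lies, is making the passage from the crude counting asymptotics to a genuine limit rather than a mere $\asymp$ equivalence, since \eqref{1.3-4} already gives the order and the novelty here is the \emph{strong} equivalence with the sharp constant. The difficulty is twofold: first, the singular values within a single degree $k$ are all equal, so $a_n$ is a step function of $n$, and one must show that the oscillation between consecutive steps washes out in the limit; this requires quantifying the error in $Z(d,k)\sim\tfrac{2}{d!}k^{d-1}$ and in $N(K)$ sharply enough (with a genuine power-saving remainder, not just a leading term) to control the inversion. Second, the logarithmic perturbation means the value function $k\mapsto k^{-\az}(\ln k)^{-\beta}$ is not a pure power, so one cannot simply quote a known eigenvalue-counting result; instead I would linearize $\ln k$ around $\ln K$ and show the correction terms are $o(1)$ after the normalization.

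Concretely, I would carry this out by fixing a large $K$, letting $n=N(K)$ run over the jump points, and establishing two-sided bounds
\begin{equation*}
\Bigl(\tfrac{2}{d!}\Bigr)^{\az/d}d^\beta(1-\varepsilon_K)\le n^{\az/d}(\ln n)^\beta a_n\le \Bigl(\tfrac{2}{d!}\Bigr)^{\az/d}d^\beta(1+\varepsilon_K)
\end{equation*}
with $\varepsilon_K\to 0$, then interpolating over the intermediate integers $n$ between successive jump points using monotonicity of $a_n$ in $n$. The interpolation step is where the step-function oscillation is tamed: because adjacent thresholds $k$ and $k+1$ give values whose ratio tends to $1$, the gap in the normalized quantity across one full block of size $Z(d,k)$ shrinks, so the lower bound at the top of a block and the upper bound at the bottom of the next block squeeze together. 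This mirrors the strategy of \cite[Theorem 3.1]{CW} for the unperturbed case $\beta=0$, and the present contribution is to show that inserting the slowly varying logarithmic factor contributes only the clean multiplicative constant $d^\beta$ and does not disturb the limit's existence.
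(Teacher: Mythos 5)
Your overall route is the same as the paper's: the diagonality/order-statistics fact you derive is exactly Lemma~\ref{lem3.2-0} (quoted from \cite[Theorem 2.8]{CW}), namely $a_n({\rm H}^{\Lz}(\ss))=\lz_{\ell,d}$ for $C(d,\ell-1)<n\le C(d,\ell)$, after which the paper performs precisely the block-squeeze you sketch, using monotonicity of $x\mapsto x^{\az/d}(\ln x)^{\beta}$, the asymptotics $C(d,\ell)/\ell^{d}\to 2/d!$, and an elementary lemma giving $\ln C(d,\ell)/\ln(\ell^{d})\to 1$. So there is no difference in strategy; the issues below are with your execution.

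There is a genuine numerical error at the heart of your constant computation, and since the entire content of the theorem is the constant (the order is already known from \eqref{1.3-4}), it is not cosmetic. You take $Z(d,k)=\dim\mathcal{H}_k^d\sim\frac{2}{d!}\,k^{d-1}$. This is false: by \eqref{2.1}, $\dim\mathcal{H}_k^d\sim\frac{2}{(d-1)!}\,k^{d-1}$; the constant $\frac{2}{d!}$ belongs to the \emph{cumulative} count, $N(K)=\dim\Pi_K^{d+1}=C(d,K)\sim\frac{2}{d!}K^d$, by \eqref{2.2}. Your version gives $N(K)\sim\frac{2}{d\cdot d!}K^d$, and if one carries your numbers through consistently, the normalization $n\approx N(K)$ yields
\[
\lim_{n\to\infty}n^{\az/d}(\ln n)^{\beta}a_n=\Big(\frac{2}{d\cdot d!}\Big)^{\az/d}d^{\beta},
\]
which is \emph{not} the claimed $\big(\frac{2}{d!}\big)^{\az/d}d^{\beta}$; your final sentence simply asserts the correct constant that your arithmetic does not produce. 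The fix is one line (use the correct per-degree dimension, or work directly with $C(d,K)$ as the paper does), but as written the computation establishes a false constant and then states the true one.

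A second, related weakness: a strong equivalence cannot be launched from ``the norm is \emph{equivalent} to a weighted $\ell_2$ norm,'' since equivalence of norms preserves only $\asymp$, never sharp constants. What you actually need (and what is true) is that the ${\rm H}^{\az,\beta}$-norm \emph{equals}, by definition, the weighted $\ell_2$ norm built from $\Lz^{(\az,\beta)}$, so that the singular values of the embedding are exactly $\lz_\ell^{(\az,\beta)}$ with multiplicity $N(d,\ell)$. For the same reason, describing the singular values as ``$\approx k^{-\az}(\ln k)^{-\beta}$'' is insufficient: one must prove that $\lim_{\ell\to\infty}\lz_\ell^{(\az,\beta)}\,\ell^{\az}(\ln\ell)^{\beta}$ exists and insert its value into the final limit --- this is exactly the hypothesis of the paper's Theorem~\ref{thm3.1}. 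Be warned that with the paper's definition one has $\ln(3+\ell(\ell+d-1))\sim 2\ln\ell$, so this limit equals $2^{-\beta}$ rather than $1$; a multiplicative constant of precisely this kind is invisible to an ``$\approx$'' argument, and confronting (or normalizing away) such factors is what separates a strong-equivalence proof from an order-of-magnitude one.
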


\begin{rem}If $\beta=0$, then Theorem \ref{thm1.1} reduces to \cite[Theorem 3.1]{CW}. One can rephrase \eqref{1.5} as strong equivalences
$$a_n(
{\rm H}^{\az,\beta}(\ss))\sim
\Big(\frac2{d\,!}\Big)^{\az/d}d^\beta n^{-\az}(\ln n)^{-\beta},$$
as $n\to\infty$  for arbitrary fixed $d$ and $\az>0,\beta\in\Bbb
R$. The novelty of Theorem \ref{thm1.1} is that it gives strong
equivalences of $a_n( {\rm H}^{\az,\beta}(\ss))$ and provides
asymptotically optimal constants, for arbitrary fixed $d$ and
$\az>0,\beta\in\Bbb R$. \end{rem}

Next, we discuss lower bounds for $e_n({\rm H}^{\az,\beta}(\ss))$
with $\az>d/2$ or $\az=d/2,\,\beta>1/2$. In the case of $\az>d/2$,
we can use  the  ``fooling" function technique, i.e., for any
quadrature rule $Q_{{\bf c},X_n}$, we can construct a fooling
function $f$ which vanishes in all nodes $X_n$, and satisfies
 $$\|f\|_{H^{\az,\beta}(\ss)}\lesssim 1,\ \ \ \ \int_{\ss}f(\x)d\sz_{d}(\x)\gtrsim
 n^{-\az/d}\ln^{-\beta}n.$$See \cite{DW, GS, W2} for the details of this
 technique. We use such technique in \cite{DW} to obtain the
 following theorem.
\begin{thm}\label{thm1.2} Let $\az>d/2$. Then
   \begin{equation} \label{1.6-0}e_n({\rm H}^{\az,\beta}(\ss))\gtrsim
n^{-\az/d}\ln^{-\beta}n. \end{equation}\end{thm}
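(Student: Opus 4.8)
The plan is to apply the ``fooling'' function technique developed in \cite{DW}. I would fix an arbitrary quadrature rule $Q_{{\bf c},X_n}$ with nodes $X_n=\{\x_1,\dots,\x_n\}$. Since $\az>d/2$, the space ${\rm H}^{\az,\beta}(\ss)$ is a reproducing kernel Hilbert space, so point evaluations are continuous and it suffices to produce a single $f\in{\rm H}^{\az,\beta}(\ss)$ that vanishes at every node, has bounded norm, and satisfies $\int_{\ss}f\,d\sz_{d}\gtrsim n^{-\az/d}\ln^{-\beta}n$. Indeed, applying the definition of the worst-case error $e(Q_{{\bf c},X_n},{\rm H}^{\az,\beta}(\ss),{\rm INT})$ to the normalised function $f/\|f\|_{{\rm H}^{\az,\beta}(\ss)}$ and using $Q_{{\bf c},X_n}(f)=\sum_j c_j f(\x_j)=0$, one gets $e(Q_{{\bf c},X_n},{\rm H}^{\az,\beta}(\ss),{\rm INT})\ge |{\rm INT}(f)|/\|f\|_{{\rm H}^{\az,\beta}(\ss)}\gtrsim n^{-\az/d}\ln^{-\beta}n$; taking the infimum over all rules then yields \eqref{1.6-0}.

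First I would run a packing argument. Put $r\asymp n^{-1/d}$, with the implicit constant small enough that $\ss$ carries a family of $N\ge 2n$ pairwise disjoint spherical caps of radius $r$; this is possible because the maximal number of such disjoint caps is $\asymp r^{-d}\asymp n$, so the constant can be tuned. Each node lies in at most one cap, hence at least $N-n\ge n$ caps contain no point of $X_n$. Let $J$ index these node-free caps, so $|J|\gtrsim n$. Next, on each node-free cap $C_j$, $j\in J$, I would place a smooth radial bump $\phi_j$ of height $h$ supported in $C_j$, rescaling a single fixed profile to the scale $r$. Elementary scaling gives $\int_{\ss}\phi_j\,d\sz_{d}\asymp h\,r^{d}$, and the spectral (Besov $B_{2,2}^\Omega$) description of the norm, with $\Omega(t)=t^{\az}(1+(\ln\frac1t)_+)^{\beta}$, yields the single-bump bound $\|\phi_j\|_{{\rm H}^{\az,\beta}(\ss)}\lesssim h\,r^{d/2-\az}(\ln\frac1r)^{\beta}$ (the spectral energy sits at degrees $\lesssim 1/r$, where the weight is $\asymp r^{-\az}(\ln\frac1r)^{\beta}$, and $\|\phi_j\|_{L_2}\asymp h\,r^{d/2}$). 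Setting $f=\sum_{j\in J}\phi_j$, the disjointness of the caps forces $f(\x_i)=0$ at every node, while $\int_{\ss}f\,d\sz_{d}\asymp|J|\,h\,r^{d}\asymp n\cdot h\cdot n^{-1}=h$.

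The crux is the norm estimate for the superposition. For a fractional, logarithmically perturbed smoothness norm, disjoint supports do not give exact additivity, so I would invoke the localisation lemma underlying the construction in \cite{DW} to get $\|f\|_{{\rm H}^{\az,\beta}(\ss)}^{2}\lesssim\sum_{j\in J}\|\phi_j\|_{{\rm H}^{\az,\beta}(\ss)}^{2}\lesssim n\,h^{2}r^{d-2\az}(\ln\frac1r)^{2\beta}$. With $r\asymp n^{-1/d}$ this is $\asymp h^{2}n^{2\az/d}(\ln n)^{2\beta}$, so choosing $h\asymp n^{-\az/d}(\ln n)^{-\beta}$ makes $\|f\|_{{\rm H}^{\az,\beta}(\ss)}\lesssim 1$ while keeping $\int_{\ss}f\,d\sz_{d}\asymp h\asymp n^{-\az/d}\ln^{-\beta}n$. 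Together with the reduction of the first paragraph, this proves \eqref{1.6-0}. The main obstacle is exactly this superposition bound: controlling the logarithmically perturbed Sobolev norm of a sum of $\asymp n$ well-separated bumps, which is where the localisation machinery of \cite{DW} does the real work.
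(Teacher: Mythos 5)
Your construction (cap packing, disjoint smooth bumps on node-free caps, then normalise) is the classical fooling-function argument going back to Hesse \cite{H1}, and it is a genuinely different route from the paper's; but as written it has a real gap at exactly the step you identify as the crux. The superposition bound $\|\sum_{j\in J}\phi_j\|_{{\rm H}^{\az,\beta}(\ss)}^{2}\lesssim\sum_{j\in J}\|\phi_j\|_{{\rm H}^{\az,\beta}(\ss)}^{2}$ cannot be quoted from \cite{DW}: that paper contains no localisation lemma for sums of disjointly supported bumps, and its lower-bound machinery is precisely the ``flat polynomial'' lemma reproduced in this paper as Lemma~\ref{lem4.1-1}. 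Since the norm of ${\rm H}^{\az,\beta}(\ss)$ is defined spectrally, disjointness of supports gives $L_2$-orthogonality but says nothing about the high-frequency components; to obtain the claimed almost-additivity you would have to pass to a local characterisation (e.g.\ the modulus-of-smoothness description of $B_{2,2}^{\Omega}(\ss)$ from \cite{WT}), arrange that the bump supports are \emph{separated} at scale $r$ (not merely lying in disjoint caps), check that $k$-th order differences with step $t\ll r$ meet at most one bump, and use a crude bound for $t\gtrsim r$. That is genuinely nontrivial work --- essentially Hesse's argument for ${\rm H}^{\az}(\ss)$, which would further need extension to the logarithmically perturbed scale --- and your proposal asserts it rather than proves it. The single-bump estimate $\|\phi_j\|_{{\rm H}^{\az,\beta}(\ss)}\lesssim h\,r^{d/2-\az}(\ln\frac1r)^{\beta}$ is also only heuristic as stated (a smooth bump does not have spectrum confined to degrees $\lesssim 1/r$), though that part is routine to repair via tail estimates of the form $E_{m}(\phi_j)_{L_2}\lesssim (mr)^{-K}h\,r^{d/2}$ for large $K$.

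By contrast, the paper's proof avoids the localisation problem entirely by making the fooling function a polynomial: choose $N\asymp n^{1/d}$ with $d_N=\dim\Pi_N^{d+1}\ge 2n$, note that the subspace $X_0\subset\Pi_N^{d+1}$ of polynomials vanishing at the $n$ nodes satisfies $\dim X_0\ge d_N-n\ge d_N/2$, and apply Lemma~\ref{lem4.1-1} (that is, \cite[Proposition 3.5]{DW}) to get $g_0\in X_0$ with $\|g_0\|_{L_p}\asymp 1$ for all $1\le p\le\infty$. The fooling function $f_0=N^{-\az}\ln^{-\beta}N\,(g_0)^2\ge 0$ vanishes at all nodes, has $\int_{\ss}f_0\,d\sz_d\asymp N^{-\az}\ln^{-\beta}N\asymp n^{-\az/d}\ln^{-\beta}n$, and --- because $f_0\in\Pi_{2N}^{d+1}$ forces $E_{2^j}(f_0)_{L_2}=0$ for $2^j>2N$ --- the norm bound $\|f_0\|_{{\rm H}^{\az,\beta}(\ss)}\lesssim N^{\az}\ln^{\beta}N\,\|f_0\|_{L_2}\lesssim\|g_0\|_{L_4}^{2}\lesssim 1$ falls out of \eqref{2.8-0} with no superposition estimate at all. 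So either supply the localisation inequality for ${\rm H}^{\az,\beta}(\ss)$ yourself, or replace the sum of bumps by the square of a vanishing polynomial as above; as it stands, the central step of your proof is unsupported by the source you cite for it.
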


 However,
in the case of $\az=d/2,\,\beta>1/2$, the  ``fooling" function
technique does not give  optimal lower bounds (see \cite{KV}).
 We  exploit the Hilbert space structure
and a variant of the Schur product theory to give optimal lower
  bounds  for $e_n({\rm H}^{d/2,\beta}(\ss))$  with $\beta>1/2$.
   Such technique was used in \cite{HKNV1, HKNV2, KV} on the $d$ dimensional  torus $\Bbb
   T^d$. It is very surprising that the above technique seems not to
   give optimal lower bounds  for $e_n({\rm H}^{\az,\beta}(\ss))$  with $\az>d/2,\ d\ge2$.

   \begin{thm}\label{thm1.3}Let $\az=d/2,\,\beta>1/2$. Then
   \begin{equation} \label{1.6}e_n({\rm H}^{\az,\beta}(\ss))\gtrsim
n^{-1/2}\ln^{-\beta+1/2}n.\end{equation}\end{thm}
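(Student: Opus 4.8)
The plan is to realize the worst-case error as a Hilbert-space norm and then extract the logarithmic factor from a carefully chosen frequency window, with the crucial positivity supplied by Vyb\'iral's theorem. First I would use the reproducing kernel $K$ of $H:={\rm H}^{d/2,\beta}(\ss)$, which is zonal with the expansion $K(\x,\y)=\sum_{k\ge0}\lambda_k^{-2}Z_k(\x,\y)$, where $Z_k$ is the reproducing kernel of the degree-$k$ spherical harmonics (so $Z_k(\x,\x)=d_k\asymp(1+k)^{d-1}$) and $\lambda_k^2\asymp(1+k)^{d}(1+\ln(1+k))^{2\beta}$ when $\az=d/2$. Writing $e(Q_{\mathbf{c},X_n})^2=\|h-\sum_{j}c_jK(\x_j,\cdot)\|_H^2$ with $h=\int_\ss K(\x,\cdot)\,d\sz_d(\x)$ the Riesz representer of ${\rm INT}$, and using that integration annihilates every harmonic of positive degree, a direct computation gives the exact identity
\begin{equation*}
e(Q_{\mathbf{c},X_n})^2=\lambda_0^{-2}\Big(1-\sum_{j}c_j\Big)^2+\sum_{k\ge1}\lambda_k^{-2}\,\mathbf{c}^{\top}Z_k^{(n)}\mathbf{c},
\end{equation*}
where $Z_k^{(n)}=(Z_k(\x_i,\x_j))_{i,j=1}^n$ is positive semidefinite. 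If $|1-\sum_jc_j|\ge1/2$ the first term already yields a constant lower bound, so I may assume $\sum_jc_j\in[1/2,3/2]$.

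Second, since each $Z_k^{(n)}$ is positive semidefinite I would discard all low frequencies and keep only $k\ge N$, so that $e(Q_{\mathbf{c},X_n})^2\ge\mathbf{c}^{\top}H^{(n)}\mathbf{c}$ with $H^{(n)}:=\sum_{k\ge N}\lambda_k^{-2}Z_k^{(n)}$, and choose the window $N\asymp n^{1/d}$. The common diagonal entry of $H^{(n)}$ is
\begin{equation*}
h:=\sum_{k\ge N}\lambda_k^{-2}d_k\asymp\sum_{k\ge N}(1+k)^{-1}(1+\ln(1+k))^{-2\beta}\asymp(\ln N)^{1-2\beta}\asymp(\ln n)^{1-2\beta},
\end{equation*}
and this tail is finite precisely because $\beta>1/2$; this is where the small-smoothness hypothesis enters, and it pins down the logarithmic exponent $1-2\beta$. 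Thus the target $e_n(H)^2\gtrsim n^{-1}(\ln n)^{1-2\beta}$ reduces to the quadratic-form bound
\begin{equation*}
\mathbf{c}^{\top}H^{(n)}\mathbf{c}\ \gtrsim\ \frac{h}{n}\Big(\sum_{j}c_j\Big)^2,
\end{equation*}
valid uniformly over all node sets $X_n\subset\ss$, since then $\sum_jc_j\asymp1$ forces $\mathbf{c}^\top H^{(n)}\mathbf{c}\gtrsim h/n$.

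This inequality, equivalently the semidefinite domination $H^{(n)}\succeq(c_0h/n)\,\mathbf{1}\mathbf{1}^{\top}$, is the heart of the matter and is where I would invoke Vyb\'iral's variant of Schur's product theorem. The point is that $H^{(n)}/h$ is a correlation matrix (positive semidefinite, unit diagonal) of the special form $(\phi(\langle\x_i,\x_j\rangle))$ with $\phi$ a nonnegative combination of Gegenbauer polynomials; because the linearization coefficients of Gegenbauer polynomials are nonnegative, all entrywise (Schur) powers of $H^{(n)}$ again lie in the cone generated by the matrices $Z_m^{(n)}$. Feeding this positivity into Vyb\'iral's Frobenius-type estimate for positive semidefinite matrices with constant diagonal yields $\mathbf{1}^{\top}(H^{(n)})^{-1}\mathbf{1}\lesssim n/h$, which is the dual form of the displayed bound. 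Combining the three steps gives $e_n(H)^2\gtrsim n^{-1}(\ln n)^{1-2\beta}$, that is, \eqref{1.6}.

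The main obstacle is exactly this uniform quadratic-form bound. One cannot argue by pointwise decay of the high-frequency kernel $\phi(\langle\x_i,\x_j\rangle)$, because the nodes are free to coincide or cluster, in which case the off-diagonal entries of $H^{(n)}$ are as large as the diagonal; any estimate that survives clustering must use the global positive-definite/Schur structure rather than separation of nodes, and this is precisely what Vyb\'iral's theorem delivers. This also explains, as noted in the text, why the fooling-function technique of \thmref{thm1.2} is too weak here: it controls $\mathbf{c}^\top H^{(n)}\mathbf{c}$ only along one distinguished direction and misses the $(\ln n)^{1/2}$ improvement. A secondary technical point will be the bookkeeping required to turn the spectral weights $\lambda_k^{-2}$ and the convergent tail $\sum_{k\ge N}\lambda_k^{-2}d_k$ into the constant-diagonal normalization demanded by Vyb\'iral's theorem, together with the (elementary) observation that $H^{(n)}$ is invertible as soon as the $\x_j$ are distinct.
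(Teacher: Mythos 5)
Your framework is the right one---it is essentially the paper's own strategy (exact Hilbert-space error identity, reduction to a positive-semidefinite domination $H^{(n)}\succeq \frac{c_0h}{n}\mathbf{1}\mathbf{1}^{\top}$, Vyb\'iral's theorem plus Gegenbauer positivity), and your computation of the diagonal $h\asymp(\ln n)^{1-2\beta}$ correctly locates where $\beta>1/2$ enters. But the decisive step is asserted, not proved, and the mechanism you propose for it does not work. Vyb\'iral's theorem bounds the \emph{Schur square} of a PSD matrix from below: $M\circ M\succeq\frac1n({\rm diag}\,M)({\rm diag}\,M)^{\top}$. Applying it to $M=H^{(n)}$ gives a lower bound for $(H^{(n)})^{\circ 2}$, which is the Gram matrix of a \emph{different} kernel and says nothing about the quadratic form $\mathbf{c}^{\top}H^{(n)}\mathbf{c}$ you need. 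Your observation that Schur powers of $H^{(n)}$ lie in the cone generated by the $Z_m^{(n)}$ runs in the wrong direction: what is required is the converse, namely to exhibit a zonal PSD kernel $K_{\mu}$ with $H^{(n)}\succeq K_{\mu}\circ K_{\mu}$ (uniformly over node sets) and $\bigl(\|\mu\|_{\ell_1}\bigr)^2\gtrsim h$, so that Vyb\'iral applied to $K_{\mu}$ yields the rank-one domination. By Schoenberg's theorem this amounts to constructing a nonnegative sequence $\mu$ whose Gegenbauer \emph{convolution square} is dominated coefficientwise by the coefficients of $H^{(n)}$; on the sphere the convolution is governed by the linearization coefficients $C_k^{\ell,s}=\langle G_\ell G_s G_k\rangle/h_k^2$, and verifying the domination is the entire technical content of the paper's proof (its Theorems 4.2, 4.8, 4.9): one takes $\mu_\ell\propto\widetilde\lambda_{2\ell}$, and the inequality $(\mu*\mu)_k\le\widetilde\lambda_k$ rests on monotonicity of $\widetilde\Lambda$, the Gasper--Hs\"u nonnegativity of $C_k^{\ell,s}$, the identity $\sum_k C_k^{\ell,s}=1$, and the ratio bound $h_{\lfloor k/2\rfloor}^2\le ch_k^2$. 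None of this is replaceable by ``feeding positivity into Vyb\'iral's estimate.''

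There is also a structural obstruction to your specific reduction: a hard truncation to frequencies $k\ge N$ can never dominate a convolution square coefficientwise, because $(\mu*\mu)_0=\sum_\ell (h_\ell^2/h_0^2)\mu_\ell^2>0$ for any nonzero $\mu$, while your $H^{(n)}$ has zero mass at frequency $0$. The paper circumvents this by never truncating to zero: it replaces $\widetilde\Lambda$ by a non-increasing minorant $\tau$ that is \emph{constant} (equal to $\widetilde\lambda_{\lfloor 4cn\rfloor}$) on the low frequencies and adds a multiple of $\delta_0*\delta_0$ to restore the correct coefficient at $0$, and only then invokes the convolution-square machinery. So your plan needs both the missing construction of $\mu$ with the accompanying linearization-coefficient estimates, and a repair of the truncation step; as written, the inequality $\mathbf{c}^{\top}H^{(n)}\mathbf{c}\gtrsim\frac hn\bigl(\sum_j c_j\bigr)^2$ remains an unproved claim at exactly the point where the fooling-function argument also fails.
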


By \eqref{1.1},  \eqref{1.6-0}, \eqref{1.6}, \eqref{1.3-0}, and
\eqref{1.3} we have the following corollary.

\begin{cor} \label{cor1.1} Let
 $\az>d/2$ or $\az=d/2,\,\beta>1/2$. Then
\begin{align} \label{1.7-0}  e_n({\rm H}^{\az,\beta}(\ss))\asymp
 g_n({\rm H}^{\az,\beta}(\ss))\asymp \bigg\{\begin{array}{ll} n^{-\az/d}\ln^{-\beta}n,\ \ & \az>d/2, \\
n^{-1/2}\ln^{-\beta+1/2}n, \ &\az=d/2, \beta>1/2.\end{array}
\end{align}\end{cor}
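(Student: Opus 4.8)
The plan is to obtain the asserted two-sided bounds by a sandwich argument, combining the quadrature-error lower bounds proved in Theorems \ref{thm1.2} and \ref{thm1.3} with the sampling-number upper bounds recorded in \eqref{1.3-0} and \eqref{1.3}, using the elementary inequality $e_n(H)\le g_n(H)$ from \eqref{1.1} as the bridge between the two quantities. Since all the genuinely hard estimates are already in place, the remaining work is purely a matter of bookkeeping, organised according to the two smoothness regimes.

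First I would treat the case $\az>d/2$. Here \eqref{1.3-0} already gives the full two-sided estimate $g_n({\rm H}^{\az,\beta}(\ss))\asymp n^{-\az/d}\ln^{-\beta}n$. Feeding the upper half of this into $e_n\le g_n$ yields $e_n({\rm H}^{\az,\beta}(\ss))\lesssim n^{-\az/d}\ln^{-\beta}n$, while Theorem \ref{thm1.2} supplies the matching lower bound $e_n({\rm H}^{\az,\beta}(\ss))\gtrsim n^{-\az/d}\ln^{-\beta}n$. Combining these gives $e_n({\rm H}^{\az,\beta}(\ss))\asymp n^{-\az/d}\ln^{-\beta}n$, and since $g_n$ already has the same order we conclude $e_n\asymp g_n\asymp n^{-\az/d}\ln^{-\beta}n$, which is the first line of \eqref{1.7-0}.

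Next I would handle the small-smoothness case $\az=d/2,\ \beta>1/2$. On the sampling side only the upper bound \eqref{1.3}, namely $g_n({\rm H}^{d/2,\beta}(\ss))\lesssim n^{-1/2}\ln^{-\beta+1/2}n$, is available, but this suffices: inserting it into $e_n\le g_n$ gives $e_n\lesssim n^{-1/2}\ln^{-\beta+1/2}n$, and Theorem \ref{thm1.3} provides the reverse inequality $e_n\gtrsim n^{-1/2}\ln^{-\beta+1/2}n$, so $e_n({\rm H}^{d/2,\beta}(\ss))\asymp n^{-1/2}\ln^{-\beta+1/2}n$. The same chain $n^{-1/2}\ln^{-\beta+1/2}n\lesssim e_n\le g_n\lesssim n^{-1/2}\ln^{-\beta+1/2}n$ simultaneously pins $g_n$ down to the identical order and forces $e_n\asymp g_n$, completing the second line of \eqref{1.7-0}.

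Given this structure there is essentially no obstacle to overcome in the corollary itself; the only point deserving a moment's care is that in the critical case it is the new lower bound \eqref{1.6} for $e_n$ that upgrades the one-sided sampling estimate \eqref{1.3} into a genuine equivalence for $g_n$, and not the other way around. All multiplicative constants are absorbed into $\asymp$, so unlike Theorem \ref{thm1.1} no tracking of asymptotically optimal constants is required here.
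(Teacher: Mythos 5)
Your proposal is correct and coincides with the paper's own (implicit) proof: the corollary is deduced there exactly by combining \eqref{1.1}, the lower bounds \eqref{1.6-0} and \eqref{1.6} from Theorems \ref{thm1.2} and \ref{thm1.3}, and the upper bounds \eqref{1.3-0} and \eqref{1.3}, in the same sandwich fashion you describe. Your closing observation---that in the critical case $\az=d/2,\ \beta>1/2$ it is the new lower bound for $e_n$ that upgrades the one-sided estimate \eqref{1.3} into a two-sided equivalence for $g_n$---is precisely the point of the corollary.
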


\begin{rem} By Corollary \ref{cor1.1}  and \eqref{1.3-4} we
obtain for $\az=d/2,\,\beta>1/2$,
\begin{align} \label{1.7} n^{-1/2}\ln^{-\beta+1/2}n\asymp e_n({\rm H}^{\az,\beta}(\ss))\asymp
 g_n({\rm H}^{\az,\beta}(\ss))\asymp
a_n({\rm H}^{\az,\beta}(\ss))\,\ln^{1/2}n.\end{align} This
 confirms the conjecture in \cite{GS} and solves Open
problem 2 posed in \cite{KV}. Of special interest is the
logarithmic gap between the sampling numbers and the approximation
numbers, which has been shown in \cite{HKNV2} on the torus $\Bbb
T$, in \cite{KV} on the $d$-dimensional torus $\Bbb T^d$, and is
now extended to the sphere $\ss$.\end{rem}

Finally, we give  constructive algorithms to obtain upper bounds
for $g_n({\rm H}^{\az,\beta}(\ss))$ with  $\az>d/2$ or
$\az=d/2,\,\beta>1/2$. The algorithms are the weighted least
squares algorithms investigated in \cite{Gr, LW}.

 An $L_2$ Marcinkiewicz-Zygmund (MZ) family $\mathcal{X}$  on a compact space
 $\mathcal M$
 is a double-indexed set of points $\{{\bf
x}_{n,k},\ n=1,2\dots, \,k=1,2,\dots, l_n\}$ in $\mathcal M$  with
a weight $\tau$ of positive numbers $\{\tau_{n,k}\}$ such that the
sampled $\ell_2$ norm of the $n$-th layer
$\left(\sum_{k=1}^{l_n}\tau_{n,k}|Q({\bf
x}_{n,k})|^2\right)^{1/2}$ is equivalent to the $L_2$ norm for all
``polynomials" $Q$ of degree at most $n$   with uniform constants.
Given an $L_2$ MZ family on $\mathcal M$, Gr\"ochenig in \cite{Gr}
introduced weighted least squares operators and least squares
quadrature rules, and derived approximation theorems and least
squares quadrature errors for Sobolev spaces. However, due to the
generality of the compact space
 $\mathcal M$, the obtained results are not optimal.
 Lu and Wang in \cite{LW}  proved that the weighted least
squares  operators $L_N^{\Bbb S}$ and the least squares quadrature
rules $I_N^{\Bbb S}$ on $\ss$ are asymptotical optimal for
$g_n({\rm H}^{\az}(\ss))$ and $e_n({\rm H}^{\az}(\ss))$ with
$\az>d/2$.

The following theorem shows that the weighted least squares
operators $L_N^{\Bbb S}$ and the least squares quadrature rules
$I_N^{\Bbb S}$ on $\ss$ are asymptotical optimal for $g_n({\rm
H}^{\az,\beta}(\ss))$ and $e_n({\rm H}^{\az,\beta}(\ss))$ with
$\az>d/2$ or $\az=d/2,\, \beta>1/2$.

\begin{thm}\label{thm1.5} Let $\mathcal{X}$ be an
$L_2$ MZ family on $\ss$ with associated weight $\tau$, global
condition number $\kappa$, and $l_N\asymp N^d\asymp n$, and let
$L_{N}^{\Bbb S}$ be the weighted least squares operator and
$I_N^{\Bbb S}$ be least squares quadrature rule induced by
$\mathcal{X}$.  We have for all $f\in {\rm H}^{\az,\beta}(\ss) $
with $\az>d/2$ or $\az=d/2, \beta>1/2$,
\begin{equation} \label{1.8-0}\left|\int_{\Bbb S^{d}}f({\bf x}){\rm d}\sigma_d ({\bf x})-I_N^{\Bbb S}(f)\right|\le \|f-L_{N}^{\Bbb S}(f)\|_{L_2}\le
c(1+\kappa^{1/2})C_{\az,\beta}(N)\|f\|_{{\rm
H}^{\az,\beta}(\ss)},\end{equation}  where $c>0$ is independent of
$n$,
$\kappa$, $f$, or $\mathcal{X}$, and $$C_{\az,\beta}(N):=\bigg\{\begin{array}{ll} N^{-\az}\ln^{-\beta}N,\ \ & \az>d/2, \\
N^{-\az}\ln^{-\beta+1/2}N, \ &\az=d/2, \beta>1/2.\end{array}$$
\end{thm}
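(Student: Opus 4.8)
The plan is to split the claimed chain into its two parts and treat them separately. The left inequality is routine: the least squares quadrature rule is defined by $I_N^{\Bbb S}(f)=\int_{\ss}L_N^{\Bbb S}(f)(\x)\,d\sz_{d}(\x)$, so that $\int_{\ss}f\,d\sz_{d}-I_N^{\Bbb S}(f)=\int_{\ss}\bigl(f-L_N^{\Bbb S}(f)\bigr)\,d\sz_{d}$, and since $\sz_{d}$ is a probability measure, Cauchy--Schwarz gives $\bigl|\int_{\ss}(f-L_N^{\Bbb S}f)\,d\sz_{d}\bigr|\le\|f-L_N^{\Bbb S}f\|_{L_1}\le\|f-L_N^{\Bbb S}f\|_{L_2}$. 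Thus everything reduces to the right inequality, i.e.\ to estimating $\|f-L_N^{\Bbb S}f\|_{L_2}$.

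For the main inequality I would use that $L_N^{\Bbb S}$ reproduces the space $\Pi_N$ of spherical polynomials of degree at most $N$ and is stable. Writing the defining $L_2$ Marcinkiewicz--Zygmund inequality as $A\|P\|_{L_2}^2\le\sum_k\tau_{N,k}|P(\x_{N,k})|^2\le B\|P\|_{L_2}^2$ for $P\in\Pi_N$, with $\kappa=B/A$, the projection property of $L_N^{\Bbb S}$ in the discrete inner product yields $\|L_N^{\Bbb S}g\|_{L_2}^2\le A^{-1}\sum_k\tau_{N,k}|g(\x_{N,k})|^2$ for every continuous $g$. Let $S_N$ denote the $L_2$-orthogonal projection onto $\Pi_N$. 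Since $S_Nf\in\Pi_N$ we have $L_N^{\Bbb S}(S_Nf)=S_Nf$, hence $f-L_N^{\Bbb S}f=(f-S_Nf)-L_N^{\Bbb S}(f-S_Nf)$ and
\[
\|f-L_N^{\Bbb S}f\|_{L_2}\le\|f-S_Nf\|_{L_2}+A^{-1/2}\Bigl(\textstyle\sum_k\tau_{N,k}|(f-S_Nf)(\x_{N,k})|^2\Bigr)^{1/2}.
\]
The first term is the best $L_2$ approximation $E_N(f)_{L_2}$, which by the definition of the norm of ${\rm H}^{\az,\beta}(\ss)$ is at most a constant times $N^{-\az}\ln^{-\beta}N\,\|f\|_{{\rm H}^{\az,\beta}(\ss)}$. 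The whole difficulty is therefore concentrated in the discrete (``aliasing'') term.

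The key step I would isolate as a lemma: writing $g:=f-S_Nf=\sum_{j>J}\delta_jf$ with $J=\log_2N$ and $\delta_j$ the dyadic block projecting onto harmonic degrees in $(2^{j-1},2^j]$, I claim
\[
\Bigl(\textstyle\sum_k\tau_{N,k}|(\delta_jf)(\x_{N,k})|^2\Bigr)^{1/2}\lesssim B^{1/2}\,(2^j/N)^{d/2}\,\|\delta_jf\|_{L_2},\qquad 2^j>N.
\]
To prove it I would combine three standard facts about MZ families and spherical polynomials: (i) the individual weights satisfy $\tau_{N,k}\lesssim B\,N^{-d}$ (test the upper MZ inequality against the degree-$N$ reproducing kernel centred at $\x_{N,k}$); (ii) the localized Nikolskii/Christoffel estimate $|P(\x)|^2\lesssim m^d\int_{\mathrm{cap}(\x,\,c/m)}|P|^2\,d\sz_{d}$ for $P\in\Pi_m$, applied with $m=2^j$ and $\x=\x_{N,k}$; and (iii) the node separation $\gtrsim1/N$, which makes the caps $\mathrm{cap}(\x_{N,k},c/2^j)$ pairwise disjoint once $2^j>N$. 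Summing (i)--(ii) over $k$ and using (iii) to bound $\sum_k\int_{\mathrm{cap}(\x_{N,k},c/2^j)}|\delta_jf|^2$ by $\|\delta_jf\|_{L_2}^2$ gives the lemma, and applying $A^{-1/2}$ turns $B^{1/2}$ into $\kappa^{1/2}$.

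It then remains to run Minkowski over the dyadic blocks and a single Cauchy--Schwarz against the $H^{\az,\beta}$ weights $\|\delta_jf\|_{L_2}\asymp2^{-j\az}j^{-\beta}\|\delta_jf\|_{{\rm H}^{\az,\beta}(\ss)}$, which reduces the discrete term to $\kappa^{1/2}\bigl(N^{-d}\sum_{j>J}2^{j(d-2\az)}j^{-2\beta}\bigr)^{1/2}\|f\|_{{\rm H}^{\az,\beta}(\ss)}$. For $\az>d/2$ the exponent $d-2\az$ is negative, the series is geometric and dominated by its first term $j\sim\log_2N$, giving $N^{-\az}\ln^{-\beta}N$; for $\az=d/2$ the exponent vanishes and the series collapses to $\sum_{j>J}j^{-2\beta}\asymp(\ln N)^{1-2\beta}$, which converges precisely because $\beta>1/2$ and produces the extra half-power, i.e.\ $N^{-d/2}\ln^{-\beta+1/2}N$. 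Combining with the $E_N(f)_{L_2}$ term (which is never larger than these) yields the factor $C_{\az,\beta}(N)$ and the constant $c(1+\kappa^{1/2})$. I expect the main obstacle to be the aliasing lemma above --- in particular verifying the localized Nikolskii inequality and the node-separation property at the level of generality of an arbitrary MZ family --- together with the bookkeeping showing that the endpoint case $\az=d/2$ genuinely loses the factor $(\ln N)^{1/2}$ and hence needs $\beta>1/2$ for convergence.
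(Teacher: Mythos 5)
Your overall architecture coincides with the paper's: reduce to $\|f-L_N^{\Bbb S}(f)\|_{L_2}$ via \eqref{2.33-4}, split off the best polynomial approximation (controlled through \eqref{2.5-1}), bound the least squares part by $A^{-1/2}$ times the discrete norm of the tail $f-S_Nf$, expand the tail into dyadic blocks, apply an aliasing bound $\|\sigma_j(f)\|_{(N)}\lesssim B^{1/2}(2^j/N)^{d/2}\|\sigma_j(f)\|_{L_2}$ to each block, and finish with Cauchy--Schwarz against the weights $2^{j\az}j^{\bz}$; your endpoint bookkeeping (the series $\sum_j j^{-2\bz}\asymp(\ln N)^{1-2\bz}$ when $\az=d/2$, $\bz>1/2$) is exactly the paper's computation \eqref{6.6}, and your treatment of the left-hand inequality matches \eqref{2.33-4}. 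The aliasing estimate you isolate as the key lemma is precisely the paper's Lemma~\ref{lem5.1}, i.e., Dai's theorem \cite[Theorem 2.1]{D2} with $p_0=q=2$, $C_1=B$, $M=2^j$; had you invoked that result, your proof would be complete and essentially identical to the paper's.

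The genuine gap is your proposed proof of that lemma: both of the ``standard facts'' (ii) and (iii) are false. For (iii), an arbitrary $L_2$ MZ family has no minimal separation: duplicate every node and halve its weights, or adjoin clusters of nearly coincident points with sufficiently small weights; \eqref{2.33-1} survives with essentially the same constants while the separation becomes $0$. For (ii), the pointwise localized Nikolskii inequality $|P(\x)|^2\lesssim m^d\int_{\mathrm{cap}(\x,c/m)}|P|^2\,d\sz_d$ for $P\in\Pi_m^{d+1}$, with a constant independent of $m$, is not true. Heuristically, the right-hand side cannot detect a spike at $\x$ of height $1$ and width $1/(mM)$ sitting on a polynomial that is negligible on the rest of the cap at the price of having size $M$ elsewhere on the sphere; Bernstein's inequality only forces $m^d\int_{\mathrm{cap}(\x,c/m)}|P|^2\gtrsim M^{-d}$, which vanishes as $M$ grows. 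This can be made rigorous already for $d=1$: rescaling the cap $[-c/m,c/m]$ to $[-1,1]$ turns degree-$m$ trigonometric polynomials into exponential sums with frequencies $jc/m\in[-c,c]$, which as $m\to\infty$ approximate every element of the span of $\{e^{i\xi u}:|\xi|\le c\}$; and no bound $|f(0)|^2\le C\int_{-1}^1|f|^2\,du$ can hold on that span, since it would produce $h\in L_2[-1,1]$ with $\int_{-1}^1e^{i\xi u}h(u)\,du=1$ for all $|\xi|\le c$, i.e., a compactly supported $L_2$ function whose Fourier transform equals $1$ on $[-c,c]$, hence everywhere by analyticity, contradicting the Riemann--Lebesgue lemma. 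What is true, and what actual proofs of Lemma~\ref{lem5.1} use, is (a) a cap-mass bound $\sum_{\x_{N,k}\in\mathrm{cap}(\x,1/N)}\tau_{N,k}\lesssim BN^{-d}$ for \emph{every} cap of radius $1/N$ --- a strengthening of your (i) that requires testing \eqref{2.33-1} against needle-type polynomials which are uniformly large on a whole cap, rather than against the reproducing kernel --- together with (b) the summed Marcinkiewicz--Zygmund-type maximal inequality of Mhaskar--Narcowich--Ward \cite{MNW}: $\sum_i\sz_d(R_i)\max_{R_i}|P|^2\lesssim\|P\|_{L_2}^2$ for $P\in\Pi_M^{d+1}$ and partitions $\{R_i\}$ of $\ss$ at scale $1/M$. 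The sum over a full partition is bounded; the individual terms, which is what your (ii) asserts, are not. So either cite Lemma~\ref{lem5.1} as the paper does, or rebuild this step from (a) and (b); as written, the central step of your argument does not stand.
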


\begin{rem} Let $\mathcal{X}$ be an
$L_2$ MZ family on $\ss$ with $l_N\le n$ and $l_N\asymp N^d\asymp
n$. For  such $L_2$ MZ family it follows from \eqref{1.7-0} and
\eqref{1.8-0} that
$$g_n({{\rm
H}^{\az,\beta}(\ss)})\le \sup_{\|f\|_{{\rm H}^{\az,\beta}(\ss)}\le
1}\|f-L_N^{\Bbb S}(f)\|_2\asymp g_n({{\rm
H}^{\az,\beta}(\ss)})\asymp C_{\az,\beta}(N),$$which implies that
the weighted least squares operators $L_N^{\Bbb S}$ are order
optimal linear algorithms in the sense of the sampling numbers.
Also, we have
$$e_n ({\rm
H}^{\az,\beta}(\ss))\le \sup_{\|f\|_{{\rm H}^{\az,\beta}(\ss)}\le
1} \left|\int_{\sph} f(\x) \, d\sz_d(\x)-I_N^{\Bbb
S}(f)\right|\asymp e_n ({\rm H}^{\az,\beta}(\ss))\asymp
C_{\az,\beta}(N),
$$ which means that  the least squares quadrature rules $I_N^{\Bbb S}$ are the
order optimal quadrature  formulas.

\end{rem}

The  paper is structured as follows. In Subsection 2.1 we
introduce Sobolev spaces $H^{\az,\beta}$ with logarithmic
perturbation. Then we delve into lower bounds for optimal
quadrature and positive definiteness and present some auxiliary
lemmas in Subsection 2.2. Definition and properties of the
weighted least squares operators are given in Subsection 2.3.
After that, in Section 3 we give the  proofs of  Theorems
\ref{thm1.1} and \ref{thm1.2}. Section 4 is devoted to elaborating
the lower bounds for  $e_n({\rm H}^{\az,\beta}(\ss))$ with
$\az=d/2,\, \beta>1/2$. We give the proof of Theorems \ref{thm1.5}
in Section 5.

\section{Preliminaries}

\subsection{Sobolev spaces ${\rm H}^{\az,\beta}(\ss)$ with logarithmic
perturbation}\

 We denote by $\mathcal{H}_\ell^d$ the space of
all spherical harmonics of degree $\ell$ on $\ss$. Denote by
$\Pi^{d+1}_m$  the set of spherical polynomials on $\mathbb{S}^d$
of degree $\le m$, which is just the set of polynomials of total
degree $\le m$ on $\mathbb{R}^{d+1}$ restricted to $\mathbb{S}^d$.
It is well known (see \cite[Corollaries 1.1.4 and 1.1.5]{DaiX})
that the dimension of $\mathcal{H}_\ell^d$ is
\begin{equation}\label{2.1} N(d,\ell):={\rm
dim}\,\mathcal{H}_\ell^d=\left\{\begin{array}{cl} 1,\ \ \
   & {\rm if}\ \ \ell=0,\\
\frac{(2\ell +d-1)\,(\ell +d-2)!}{(d-1)!\ \ell!}, \ \     & {\rm
if}\ \ \ell=1,2,\dots,
\end{array}\right.\end{equation} and \begin{equation}\label{2.2}C(d,m):={\rm
dim}\,\Pi_m^{d+1}=\frac{(2m+d)(m+d-1)!}{m!\,d!},\ m\in\Bbb
N.\end{equation} It is evident that $$N(d,\ell)\asymp \ell^{d-1}\
\ \ {\rm and} \ \ C(d,m)\asymp m^d.$$

 Let
$$\{Y_{\ell,k}\ |\ k=1,\dots, N(d,\ell)\}$$
be a collection of $L_2$-orthonormal real spherical harmonics of
degree $\ell$ (see, e.g., \cite{M}). Each spherical harmonic
$Y_{\ell,k}$ of exact degree $\ell$ is an eigenfunction of the
negative Laplace-Beltrami opertor $-\Delta_{0}^{*}$ with
eigenvalue
$$\widehat{\lz_\ell}=\ell(\ell+d-1).$$
By  the addition formula for spherical harmonic  of exact degree
$\ell$, we have  for $ \x,\y \in\ss,$
 \begin{equation}\label{2.3}
\sum_{k=1}^{N(d,\ell)}Y_{\ell,k}(\x)Y_{\ell,k}(\y)=\frac{n+\frac{d-1}2}{\frac{d-1}2}C^{\frac{d-1}2}_{n}(\x\cdot\y)=N(d,\ell)G_\ell(\x\cdot\y),
\end{equation}
where $C_{n}^{\frac{d-1}2}(t), \ n\ge 0,\,t\in[-1,1]$ are the
usual Gegenbauer polynomials defined as in \cite[ pp. 18-20 and
Proposition 1.4.11]{DX}, and $G_n,\,n=0,1,\dots,$ are the
Gegenbauer polynomials of the index $\frac{d-1}2$ normalized by
\begin{equation}\label{2.4} G_n(1)=1,\ \ \
C_n^{\frac{d-1}2}(1)\,G_n(t)=C_n^{\frac{d-1}2}(t).
\end{equation}
We fix $G_n$ in the sequel.

Let
$$\{Y_{\ell,k}\ |\ k=1,\dots, N(d,\ell)\}$$
be a fixed orthonormal basis for $\mathcal{H}_\ell^d$. Then
$$\{Y_{\ell,k}\ |\ \ell=0,1,2,\dots,\ k=1,\dots, N(d,\ell) \}$$ is
an orthonormal basis for the Hilbert space $L_2(\ss)$. Thus any
$f\in L_2(\ss)$ can be expressed by its Fourier (or Laplace)
series
$$f=\sum_{\ell=0}^{\infty}
H^d_\ell(f)=\sum_{\ell=0}^{\infty}{\sum_{k=1}^{N(d,\ell)} \langle
f,Y_{\ell,k}\rangle Y_{\ell,k}},$$
 where $$H_\ell^d(f)(\x)=\sum\limits_{k=1}^{N(d,\ell)} \langle
f,Y_{\ell,k}\rangle
Y_{\ell,k}(\x)=N(d,\ell)\int_{\ss}f(\y)G_\ell(\x\cdot
\y)d\sz_{d}(\y)$$ denote the orthogonal projections of $f$ onto
$\mathcal{H}_\ell^d$, and
$$\langle f,Y_{\ell,k}\rangle =\int_{\ss} f(\x)
Y_{\ell,k}(\x) \, d\sz_{d}(\x)$$ are the  Fourier coefficients of
$f$. We also have the following Parseval equality,
$$\|f\|_{L_2}=\Big(\sum_{\ell=0}^{\infty}\sum_{k=1}^{N(d,\ell)} |\langle
f,Y_{\ell,k}\rangle
|^2\Big)^{1/2}=\Big(\sum_{\ell=0}^{\infty}\|H_\ell^d(f)\|_{L_2}^2\Big)^{1/2}.$$

Let $S_n$ denote the orthogonal projection  onto $\Pi_n^{d+1}$.
Then for $f\in L_1(\ss)$,
\begin{equation}\label{2.5-0}
S_n(f)=\sum_{j=0}^nH_j^d(f)=\int_{\ss}f(\y)E_n(\x,\y)d\sz_d(\y),
\end{equation}where $$E_n(\x,\y)=\sum_{\ell=0}^n \sum_{k=1}^{N(d,\ell)}
Y_{\ell,k}(\x) Y_{\ell,k}(\y)$$ is the reproducing kernel of
$\Pi_n^{d+1}$ with respect to the inner product of $L_2$.

For $f\in L_p,\ 1\le p\le \infty,$ we define
$$E_n(f)_{L_p}=\inf_{g\in\Pi_n^{d+1}}\|f-g\|_{L_p}.$$
It is well known that
\begin{equation}\label{2.5}
E_n(f)_{L_2}=\|f-S_n(f)\|_{L_2}. \end{equation}

 Let
$\Lz=\{\lz_\ell\}_{\ell=0}^\infty$ be a bounded sequence, and let
$T^{\Lz}$ be a multiplier operator on $L_2(\ss)$ defined by
$$T^{\Lz} (f)=\sum_{\ell=0}^{\infty}\lz^{-1}_\ell
H^d_\ell(f)=\sum_{\ell=0}^{\infty}\lz^{-1}_\ell
{\sum_{k=1}^{N(d,\ell)} \langle f,Y_{\ell,k}\rangle Y_{\ell,k}}.$$

\begin{defn}\label{d2.1}Let $\Lz=\{\lz_\ell\}_{\ell=0}^\infty$ be
a non-increasing positive  sequence with
$\lim\limits_{\ell\to\infty}\lz_\ell=0$. We define the multiplier
space ${\rm H}^{\Lz}(\ss)$ by
\begin{align*}{\rm H}^{\Lz}(\ss)&:=\Big\{T^{\Lz} f\,    \big|\, f\in
L_2(\ss) \ ,\ \|T^{\Lz}
f\,\|_{{\rm H}^{\Lz}(\ss)}=\|f\|_{L_2(\ss)}<\infty\Big\}\\&
 := \Big\{f\in
L_2(\ss)\,    \big|\,
\|f\,\|_{{\rm H}^{\Lz}(\ss)}:=\Big(\sum_{\ell=0}^{\infty}\lz_\ell^{-2}\sum_{k=1}^{N(d,\ell)}
|\langle f,Y_{\ell,k}\rangle|^2\Big)^{1/2}<\infty\Big\}.
\end{align*}\end{defn}

Clearly, the multiplier  space ${\rm H}^{\Lz}(\ss)$ is a Hilbert space
with inner product
$$\langle f,g \rangle_{{\rm H}^{\Lz}(\ss)}= \sum_{\ell=0}^{\infty}\lz_\ell^{-2}\sum_{k=1}^{N(d,\ell)}
\langle f,Y_{\ell,k}\rangle\, \langle g,Y_{\ell,k}\rangle=
\sum_{\ell=0}^{\infty}\lz_\ell^{-2} \langle H_\ell^d
(f),H_\ell^d(g)\rangle,$$ and
\begin{equation}\label{2.5-1}
E_n(f)_{L_2}=\|f-S_n(f)\|_{L_2}\le \lz_{n} \|f\|_{{\rm
H}^{\Lz}(\ss)}.
\end{equation}

Moreover, we remark that the embedding ${\rm
H}^{\Lz}(\ss)\hookrightarrow C(\ss)\ ({\rm or}\ L_{\infty}(\ss))$
is continuous if
\begin{equation}\label{2.6}
\sum_{\ell=0}^{\infty}\lz^{2}_\ell N(d,\ell)<\infty,
\end{equation}
which ensures ${\rm H}^{\Lz}(\ss)$ is a reproducing kernel Hilbert space
with the reproducing kernel
\begin{align}\label{2.7}
  K^\Lambda({\bf x},{\bf y})
  &=\sum_{\ell=0}^\infty\lz_\ell^2\sum_{k=1}^{N(d,\ell)}Y_{\ell,k}({\bf x})Y_{\ell,k}({\bf y})
    =\sum_{\ell=0}^\infty\lambda_\ell^2N(d,\ell)G_\ell({\bf x}\cdot{\bf
    y}),
\end{align}where $G_\ell(t),\ t\in
[-1,1]$ is given in \eqref{2.4}. Indeed, if \eqref{2.6} holds, then  we have
 for any
$\x\in\ss$,
$$\|K^\Lz(\x,\cdot)\|_{{\rm H}^\Lz(\ss)}^2=\langle K^\Lz(\x,\cdot),
 K^\Lz(\x,\cdot)\rangle_{{\rm H}^\Lz(\ss)}=K^\Lz(\x,\x)=\sum_{\ell=0}^\infty \lz_\ell^2 N(d,\ell)<\infty,
 $$and for $f\in {\rm H}^{\Lz}(\ss)$, $$\langle f,
K^\Lz(\x,\cdot)\rangle_{{\rm H}^\Lz(\ss)}=\sum_{\ell=0}^\infty\lz_\ell^2\sum_{k=1}^{N(d,\ell)}\langle
f,Y_{\ell,k}\rangle\lz_\ell^{-2}Y_{\ell,k}({\bf x})=f({\bf x}),$$
which means that ${\rm H}^{\Lz}(\ss)$ is a reproducing kernel Hilbert
space with the kernel $K^\Lz(\cdot,\cdot)$. It follows from the
Cauchy inequality that for $f\in {\rm H}^{\Lz}(\ss),\ \x,\y\in\ss,$
\begin{align*}|f(\x)-f(\y)|^2&=|\langle f,
K^\Lz(\x,\cdot)-K^\Lz(\y,\cdot)\rangle_{{\rm H}^\Lz(\ss)}|^2\\ &\le
\|f\|_{{\rm H}^\Lz(\ss)}^2\
\|K^\Lz(\x,\cdot)-K^\Lz(\y,\cdot)\|_{{\rm H}^\Lz(\ss)}^2\\
&=2\|f\|_{{\rm H}^\Lz(\ss)}^2 \sum_{\ell=0}^\infty
\lz_\ell^{2}N(d,\ell)(1-G_\ell(\x\cdot \y)).\end{align*}This
implies that $f$ is continuous on $\ss$. Hence, the space ${\rm
H}^{\Lz}(\ss)$ is continuously embedded in $C(\ss)$.

For $\Lz=\{\lz_\ell\}_{\ell=0}^\infty$, we set
$$\tilde\Lz=\{\tilde \lz_\ell\}_{\ell=0}^\infty,\ \
\tilde\lz_\ell=\lz_\ell^2N(d,\ell),\ \ell=0,1,\dots.$$ If
\eqref{2.6} holds, then $\tilde \Lz\in\ell_1(\Bbb N_0)$, i.e.,
$$\|\tilde \Lz\|_{\ell_1}\equiv\|\tilde \Lz\|_{\ell_1(\Bbb N_0)}:=\sum_{\ell=0}^\infty |\tilde
\lz_\ell|<\infty,
$$and the reproducing kernel of ${\rm H}^\Lz(\ss)$ can be written by
$$K^\Lz(\x,\y)=\sum_{\ell=0}^\infty \tilde \lz_\ell
G_\ell(\x\cdot\y),\ \ \x,\y\in\ss.$$ We write $${\rm
H}_{\widetilde\Lambda}:={\rm H}^{\Lambda}(\ss),\ \ \
K_{\widetilde\Lambda}({\bf x},{\bf y}):=K^\Lambda({\bf x},{\bf
y}).$$
\begin{defn}Let $\az>0,\,\beta\in\Bbb R$. The Sobolev space ${\rm H}^{\az,\beta}(\ss)$ with logarithmic
perturbation is defined to be the multiplier space
${\rm H}^{\Lz^{(\az,\beta)}}(\ss)$, where
$\Lz^{(\az,\beta)}=\{\lz_\ell^{(\az,\beta)}\}_{\ell=0}^\infty$,
and
$$\lz_\ell^{(\az,\beta)}=(1+\ell(\ell+d-1))^{-\az/2}(\ln(3+\ell(\ell+d-1)))^{-\beta}.
$$ \end{defn}

If $\beta=0$, then  the space ${\rm H}^{\az,\beta}(\ss)$ reduces
to the   Sobolev space ${\rm H}^{\az,+}$ as defined in
\cite[Definition 2.2]{CW}. Hence ${\rm H}^{\az,\beta}(\ss)$  is
the Sobolev space with logarithmic perturbation. Such spaces were
investigated intensively (see \cite{DT, GS, Mou, WW1}). In the
case $\az>d/2$ or $\az=d/2,\, \beta>1/2$, the space ${\rm
H}^{\az,\beta}(\ss)$ is continuously embedded into the space of
continuous functions $C(\ss)$, and   is a reproducing kernel
Hilbert space.

 For $f\in {\rm H}^{\az,\beta}(\ss),\, \az>0$, we  define
$$\sigma_1(f)=S_{2}(f), \ \ \sigma_j(f)=S_{2^{j}}(f)-S_{2^{j-1}}(f),\ \
{\rm for}\ j\ge2.$$Then $\sigma_j(f)\in\Pi_{2^{j}}^{d+1}$ and
$$f=\sum_{j=1}^\infty\sigma_j(f)$$ converges  in the $L_2$ norm.
Note that $\sigma_j(f)$ satisfies the Bernsetin inequality,
i.e., for any $r>0$,
$$ \|(-\triangle_0)^{r/2}(\sz_j(f))\|_{L_2}\lesssim
2^{jr}\|\sz_j(f)\|_{L_2}.$$ Also,  by \eqref{2.5} we obtain
\begin{align*}
  \|\sigma_j(f)\|_{L_{2}}\le
  \|f-S_{2^{j}}(f)\|_{L_{2}}+\|f-S_{2^{j-1}}(f)\|_{L_{2}}\le
  2E_{2^{j-1}}(f)_{L_{2}}.
  \end{align*}
Now set $\Omega(t)=t^\az(1+(\ln\frac1t)_+)^\beta$ for $t>0$.
Consider
  the Besov space $B_{2,2}^\Omega(\ss)$  of generalized smoothness  (see \cite{WT,WW1}). It
follows from \cite[Remark 2]{WT} and the above two inequalities
that $$\|f\|_{B_{2,2}^\Oz(\ss)}\asymp \Big(\sum_{j=0}^\infty \frac
{\|\sz_j(f)\|_{L_2}^2}{\Oz(2^{-j})^2}\Big)^{1/2}.
$$ On the other hand, we have
$$\|f\|_{{\rm H}^{\az,\beta}(\ss)}^2=\sum_{\ell=0}^\infty
(\lz_\ell^{(\az,\beta)})^{-2}\|H_\ell^d(f)\|_{L_2}^2=:\sum_{j=1}^\infty
I_j.
$$
Here, $$I_1:=\sum_{\ell=0}^2
(\lz_\ell^{(\az,\beta)})^{-2}\|H_\ell^d(f)\|_{L_2}^2\asymp
\sum_{\ell=0}^2 \|H_\ell^d(f)\|_{L_2}^2=\|\sz_1(f)\|_{L_2}^2\asymp
\frac{ \|\sz_1(f)\|_{L_2}^2}{\Oz(2^{-1})^2},
$$
and for $j\ge 2$,
$$I_j:=\sum_{\ell=2^{j-1}+1}^{2^j}
(\lz_\ell^{(\az,\beta)})^{-2}\|H_\ell^d(f)\|_{L_2}^2\asymp
2^{2j\az}j^{2\beta}\sum_{\ell=2^{j-1}+1}^{2^j}
\|H_\ell^d(f)\|_{L_2}^2\asymp \frac{
\|\sz_j(f)\|_{L_2}^2}{\Oz(2^{-j})^2}.
$$
It follows that
\begin{align}\|f\|_{{\rm H}^{\az,\beta}(\ss)}&\asymp \|f\|_{B_{2,2}^\Oz(\ss)}\asymp \Big(\sum_{j=1}^\infty \frac
{\|\sz_j(f)\|_{L_2}^2}{\Oz(2^{-j})^2}\Big)^{1/2}\notag\\&
\asymp\Big(\sum_{j=1}^\infty 2^{2j\az}
j^{2\beta}\|\sz_j(f)\|_{L_2}^2\Big)^{1/2}.\label{2.8}
\end{align} This means that the space ${\rm H}^{\az,\beta}(\ss)$ is
just the Besov space $B_{2,2}^\Omega(\ss)$ (see \cite{WT,WW1}) of
generalized smoothness  with above equivalent norms. Here,
$\Omega(t)=t^\az(1+(\ln\frac1t)_+)^\beta$ for $t>0$. Hence, from
\cite{WT,WW1}) we have
\begin{align}\|f\|_{{\rm H}^{\az,\beta}(\ss)}\asymp \|f\|_{L_2} +\Big(\sum_{j=1}^\infty 2^{2j\az}
j^{2\beta} E_{2^j}(f)_{L_{2}}^2\Big)^{1/2}.\label{2.8-0}
\end{align}

\subsection{Lower bounds for optimal quadrature and positive definiteness}\

Let $H \equiv H(K)$ be a reproducing kernel Hilbert space (RKHS)
consisting of real-valued functions on $\ss$ with  reproducing
kernel $K\in C({\Bbb S}^{d}\times {\Bbb S}^{d})$ and inner product
$\langle \cdot,\cdot\rangle_H$. That is, $H$ is a Hilbert space of
real-valued functions on $\Bbb S^d$ such that point evaluation
$$\delta_{\bf x}:\,H(K)\to\Bbb R,\ \ \delta_{\bf x}(f):=f({\bf x})$$
is a continuous functional on $H$ for all ${\bf x}\in\Bbb S^d$,
and the reproducing kernel $K$ satisfies

(i) for all $\x,\y\in\ss$, $K(\x,\y)=K(\y,\x)$;

(ii) for all $\x\in\ss$, $K(\x,\cdot)\in H$;

(iii) for all $\x\in\ss$ and $f\in H$, $f(\x)=\langle f,
K(\x,\cdot)\rangle_{H}$.

\noindent We refer to \cite{A, BT} for basics on RKHSs.

For $h\in H$, we define $S_h$ the continuous linear functional on
$H$ by  $S_h(f)=\langle f,h\rangle_{H}$ for $f\in H$. Since $\rm
INT$ is continuous on $H$, by the Riesz representation theorem,
there exists an $h_0 \in H$ for which
$${\rm INT}(f)=\int_{{\Bbb S}^{d}}f({\bf{x}})\, d\sigma_{d}({\bf
{x}})=\langle f,h_0\rangle_{H}=S_{h_0}(f).$$ We have for
$\x\in\ss$,
$$ h_0({\bf x})=\big\langle h_0, K({\bf x},\cdot) \big\rangle_{H}={\rm INT}(K({\bf x},\cdot))
=\int_{{\Bbb S}^{d}}K({\bf{x}},{\bf{y}}) d\sigma_{d}({\bf y}).$$

 We consider the space ${\rm H}^{\Lz}(\ss)={\rm H}_{\tilde \Lz}$, where $\Lz=\{\lz_k\}_{k=0}^\infty$ is a positive
 sequence, $\tilde \Lz=\{\tilde \lz_k\}_{k=0}^\infty$, and $\tilde\lz_k=\lz_k^2N(d,k),\ k=0,1,\dots$. As seen in Subsection 2.1, the space ${\rm H}^{\Lz}(\ss)$
 is  RKHS on the sphere  if and only if $$\sum_{k=0}^\infty
 \lz_k^2N(d,k)=\sum_{k=0}^\infty\tilde\lz_k<+\infty.$$
 In this case, the reproduce kernel is $$K_{\tilde{\Lz}}({\bf x},{\bf y}):=\sum_{\ell=0}^\infty \widetilde{\lambda}_\ell G_\ell ({\bf x}\cdot{\bf y}),
\  {\bf x},{\bf y}\in\Bbb S^d,$$and the representer of the
integration problem INT is
$$ h_{\widetilde \Lz}({\bf x})=\int_{{\Bbb S}^{d}}K_{\widetilde \Lz}({\bf{x}},{\bf{y}}) d\sigma_{d}({\bf y})
=\sum_{\ell=0}^\infty \widetilde{\lambda}_\ell \int_{\Bbb
S^d}G_\ell ({\bf x}\cdot{\bf y})d\sigma_{d}({\bf
y})=\widetilde{\lambda}_0=\lz_0^2,$$ for all ${\bf{x}}\in {\Bbb
S}^{d} $. This means that
$${\rm INT}(f)=\int_{{\Bbb S}^{d}}f({\bf{x}})\, d\sigma_{d}({\bf
{x}})=\langle f,h_{\widetilde \Lz}\rangle_{{\rm
H}_{\tilde\Lz}}=S_{h_{\widetilde \Lz}}(f).$$ Furthermore, we have
$$\|h_{\widetilde \Lz}\|_{{\rm H}_{\tilde\Lz}}=(\tilde\lambda_0)^{1/2}=\lz_0.$$

 We say that a symmetric matrix $M\in {\Bbb R}^{n\times n}$
is positive semi-definite if $\mathbf{c}^{T}M\mathbf{c} \geq 0$
for all $\mathbf{c}\in {\Bbb R}^{n}$. If $M,N \in {\Bbb
R}^{n\times n}$, we denote by $M\circ N$ their Hadamard (Schur)
product, i.e., a matrix with entries $(M\circ
N)_{j,k}=M_{j,k}\cdot N_{j,k}$ for all $j,k=1,...,n$ (see
\cite{H}). Furthermore, the partial ordering $M\succeq N$ of
symmetric  matrices means that $ M-N $ is positive semi-definite.
We denote by ${\rm diag }M=(M_{1,1},...,M_{n,n})^{T}$ the vector
of diagonal entries of $M$ whenever $M\in {\Bbb R}^{n\times n}$.

In order to prove  lower bounds for $e_n({\rm H}_{\tilde \Lz})$ we
need the following lemmas. The first lemma is Schoenberg's
remarkable result about positive definite functions on the sphere.

\begin{lem}(See \cite{Sch, DaiX}.) \label{lem2.3} Let $\Gamma=\{\gz_j\}_{j=0}^\infty\in\ell_1(\Bbb N_0)$, and
let
$$f_\Gz (t)=\sum_{j=1}^\infty \gz_j G_j(t),\ \  \ \ K_\Gz(\x,\y)=f_\Gz(\x\cdot\y),$$where $G_j$ are the
normalized Gegenbauer polynomials given in \eqref{2.4}, $t\in
[-1,1]$, $\x,\y\in\ss$. Then for all $n\in \Bbb N$ and ${\bf
x}_{1},{{\bf x}_{2}},\cdot \cdot \cdot, {{\bf x}_{n}} \in {\Bbb
S}^{d} $, the matrix
$$\big(K_\Gz(\x_{j},\x_{k})\big)_{1\le
j,k\leq n}$$ is positive semi-definite if and only if $\Gz$ is
nonnegative.\end{lem}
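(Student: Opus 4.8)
The statement is Schoenberg's characterization of positive definite zonal kernels on $\ss$, and the plan is to prove the two implications separately: the \emph{sufficiency} (nonnegativity of $\Gz$ forces positive semi-definiteness) is a direct consequence of the addition formula \eqref{2.3}, while the \emph{necessity} relies on upgrading discrete positivity to a continuous one and then extracting the individual coefficients by testing against spherical harmonics.

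For the sufficiency, I would first observe that since $\Gz\in\ell_1(\NN_0)$ and $|G_\ell(t)|\le G_\ell(1)=1$ for all $t\in[-1,1]$, the series defining $K_\Gz$ converges absolutely and uniformly, so the manipulations below are legitimate. Fix $n$, points $\x_1,\dots,\x_n\in\ss$ and $\cb=(c_1,\dots,c_n)^T\in\RR^n$. Rewriting $G_\ell(\x_j\cdot\x_k)=N(d,\ell)^{-1}\sum_{m=1}^{N(d,\ell)}Y_{\ell,m}(\x_j)Y_{\ell,m}(\x_k)$ via \eqref{2.3}, I would compute
\begin{equation*}
\sum_{j,k=1}^n c_jc_k K_\Gz(\x_j,\x_k)
=\sum_{\ell}\frac{\gz_\ell}{N(d,\ell)}\sum_{m=1}^{N(d,\ell)}\Big(\sum_{j=1}^n c_j Y_{\ell,m}(\x_j)\Big)^2 .
\end{equation*}
Each inner term is a sum of squares, hence nonnegative; thus if every $\gz_\ell\ge0$ the whole quadratic form is $\ge0$, i.e. the matrix is positive semi-definite.

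For the necessity, I would first pass from the discrete positivity to a continuous one: if $\big(K_\Gz(\x_j,\x_k)\big)$ is positive semi-definite for every finite configuration, then
\begin{equation*}
\int_{\ss}\int_{\ss}K_\Gz(\x,\y)g(\x)g(\y)\,d\sz_d(\x)\,d\sz_d(\y)\ge0
\end{equation*}
for every continuous $g$ on $\ss$. This follows by approximating the double integral with Riemann sums: partitioning $\ss$ into small cells with sample points $\x_i$ and weights $w_i>0$ equal to the cell measures, the Riemann sum equals $\sum_{i,k}c_ic_kK_\Gz(\x_i,\x_k)$ with $c_i=w_ig(\x_i)\in\RR$, which is $\ge0$ by hypothesis; letting the mesh tend to $0$ and using the uniform continuity of $K_\Gz$ and $g$ gives the integral inequality. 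I would then isolate the coefficients by taking $g=Y_{\ell,m}$: inserting the uniformly convergent expansion of $K_\Gz$ and using the orthonormality $\langle Y_{j,k},Y_{\ell,m}\rangle=\delta_{j\ell}\delta_{km}$ yields
\begin{equation*}
\int_{\ss}\int_{\ss}K_\Gz(\x,\y)Y_{\ell,m}(\x)Y_{\ell,m}(\y)\,d\sz_d(\x)\,d\sz_d(\y)=\frac{\gz_\ell}{N(d,\ell)}\ge0,
\end{equation*}
so $\gz_\ell\ge0$ for every $\ell$, i.e. $\Gz$ is nonnegative.

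The main obstacle I anticipate is the necessity direction, specifically the passage from positive semi-definiteness on all finite point sets to positivity of the integral operator (the Riemann-sum limit), together with the justification — via $\Gz\in\ell_1(\NN_0)$ and the uniform bound $|G_\ell|\le1$ — of the term-by-term integration used to extract $\gz_\ell$. Everything else reduces to the addition formula \eqref{2.3} and the orthonormality of the $Y_{\ell,m}$.
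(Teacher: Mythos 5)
Your proposal is correct, and both directions are sound: the sufficiency argument via the addition formula \eqref{2.3} (turning the quadratic form into a nonnegative combination of squares $\bigl(\sum_j c_j Y_{\ell,m}(\x_j)\bigr)^2$), and the necessity argument via Riemann-sum approximation to get integral positivity followed by testing against a single $Y_{\ell,m}$ to extract $\gz_\ell/N(d,\ell)\ge 0$. There is nothing in the paper to compare against: the authors state this lemma as Schoenberg's classical theorem and cite \cite{Sch, DaiX} without proof, and your two-step argument is essentially the standard proof found in those references, with all the analytic justifications (absolute and uniform convergence from $\Gz\in\ell_1(\Bbb N_0)$ and $|G_\ell(t)|\le G_\ell(1)=1$, which holds for the normalized Gegenbauer polynomials of positive index $\tfrac{d-1}2$) correctly identified. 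One pedantic remark: the statement as printed sums from $j=1$, which is evidently a typo for $j=0$ (otherwise $\gz_0$ would not influence $K_\Gz$ at all and the ``only if'' direction could not recover its sign); your proof implicitly and correctly treats the sum as running over all of $\Bbb N_0$, consistent with the kernels $K_{\widetilde\Lambda}$ used elsewhere in the paper.
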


The second lemma gives a characterization of lower bounds for
$e_n(H)$ via the positive definiteness of certain matrices
involving the reproducing kernel $K$ and the representer $h_0$ of
the integration problem INT.

\begin{lem}(See \cite[Proposition 1]{HKNV1} and \cite{HKNV2, KV}.)
\label{lem2.4} Let $H$ be a RKHS on $\ss$ with reproducing kernel
$K \in C(\ss \times \ss)$, and let ${\rm INT}=\langle \cdot, h_0
\rangle_{H}$ for some $h_0 \in H$,  $\alpha > 0$. For all $n\in
\Bbb N$ and ${\bf x}_{1},{{\bf x}_{2}},\cdot \cdot \cdot, {{\bf
x}_{n}} \in {\Bbb S}^{d} $, the matrix
$$\big(K(\x_{j},\x_{k})- \alpha h_0(\x_{j})h_0(\x_{k})\big)_{1\le
j,k\leq n}$$ is positive semi-definite, if and only if
$$e_{n}^{2}(H)\geq \|h_0\|_{H}^{2}- \alpha^{-1}.$$
\end{lem}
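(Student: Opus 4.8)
The plan is to reduce the statement to a finite-dimensional linear-algebra fact about the Gram matrix $K_{X_n}:=(K(\x_j,\x_k))_{1\le j,k\le n}$ and the vector $\mathbf h:=(h_0(\x_1),\dots,h_0(\x_n))^T$ of representer values, and then to identify positive semi-definiteness of $K_{X_n}-\alpha\,\mathbf h\mathbf h^T$ with a bound on a quadratic form via a Schur-complement / Cauchy--Schwarz argument.

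First I would derive the classical worst-case error formula in an RKHS. Writing $Q_{\mathbf c,X_n}(f)=\sum_j c_j f(\x_j)=\langle f,\sum_j c_j K(\x_j,\cdot)\rangle_H$ and ${\rm INT}(f)=\langle f,h_0\rangle_H$, one gets ${\rm INT}(f)-Q_{\mathbf c,X_n}(f)=\langle f,\,h_0-\sum_j c_j K(\x_j,\cdot)\rangle_H$, so that
$$e(Q_{\mathbf c,X_n},H,{\rm INT})^2=\Big\|h_0-\sum_{j=1}^n c_j K(\x_j,\cdot)\Big\|_H^2=\|h_0\|_H^2-2\,\mathbf c^T\mathbf h+\mathbf c^T K_{X_n}\mathbf c,$$
by the reproducing property $\langle h_0,K(\x_j,\cdot)\rangle_H=h_0(\x_j)$ and $\langle K(\x_j,\cdot),K(\x_k,\cdot)\rangle_H=K(\x_j,\x_k)$. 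Minimising this quadratic over $\mathbf c\in\RR^n$ (the minimum is attained at a solution of $K_{X_n}\mathbf c=\mathbf h$; note $\mathbf h\in{\rm range}\,K_{X_n}$ automatically, since $\mathbf c\in\ker K_{X_n}$ forces $\sum_j c_j K(\x_j,\cdot)=0$ and hence $\mathbf c^T\mathbf h=0$) yields, for fixed nodes,
$$\inf_{\mathbf c}e(Q_{\mathbf c,X_n},H,{\rm INT})^2=\|h_0\|_H^2-\mathbf h^T K_{X_n}^{+}\mathbf h,$$
where $K_{X_n}^{+}$ is the Moore--Penrose inverse. Taking the infimum over $X_n$ then shows that $e_n^2(H)\ge\|h_0\|_H^2-\alpha^{-1}$ holds if and only if $\mathbf h^T K_{X_n}^{+}\mathbf h\le\alpha^{-1}$ for every choice of $n$ nodes.

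It then remains to prove the matrix-theoretic equivalence: for $K_{X_n}\succeq0$ and $\mathbf h\in{\rm range}\,K_{X_n}$, one has $K_{X_n}-\alpha\,\mathbf h\mathbf h^T\succeq0$ if and only if $\mathbf h^T K_{X_n}^{+}\mathbf h\le\alpha^{-1}$. For the ``only if'' direction I would test the quadratic form against the minimum-norm solution $\mathbf v=K_{X_n}^{+}\mathbf h$ of $K_{X_n}\mathbf v=\mathbf h$, using $\mathbf v^T K_{X_n}\mathbf v=\mathbf v^T\mathbf h=\mathbf h^T K_{X_n}^{+}\mathbf h$ to deduce $\mathbf h^T K_{X_n}^{+}\mathbf h\,(1-\alpha\,\mathbf h^T K_{X_n}^{+}\mathbf h)\ge0$, whence $\mathbf h^T K_{X_n}^{+}\mathbf h\le\alpha^{-1}$. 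For the ``if'' direction I would apply Cauchy--Schwarz in the semi-inner product $\langle\mathbf a,\mathbf b\rangle:=\mathbf a^T K_{X_n}\mathbf b$: for any $\mathbf c$, $(\mathbf c^T\mathbf h)^2=\langle\mathbf c,\mathbf v\rangle^2\le(\mathbf c^T K_{X_n}\mathbf c)(\mathbf h^T K_{X_n}^{+}\mathbf h)\le\alpha^{-1}\,\mathbf c^T K_{X_n}\mathbf c$, which is exactly $\mathbf c^T(K_{X_n}-\alpha\,\mathbf h\mathbf h^T)\mathbf c\ge0$. Combining this with the reduction above proves the lemma.

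The main obstacle I anticipate is the careful treatment of the singular case in which $K_{X_n}$ is not invertible, so that neither the naive expression $\mathbf h^T K_{X_n}^{-1}\mathbf h$ nor the Schur complement of the bordered matrix $\begin{pmatrix}\alpha^{-1}&\mathbf h^T\\ \mathbf h&K_{X_n}\end{pmatrix}$ is immediately available. The geometric fact that makes everything go through is that $\mathbf h$ always lies in the range of $K_{X_n}$, which legitimises replacing the inverse by the pseudoinverse and which drives the Cauchy--Schwarz step. Everything else is routine, and one may alternatively package the two directions through the Schur-complement positivity criterion for the bordered matrix rather than the explicit Cauchy--Schwarz computation.
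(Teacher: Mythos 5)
Your proof is correct and complete. A point of comparison worth making explicit: the paper itself contains no proof of Lemma~\ref{lem2.4} --- it is imported verbatim from \cite[Proposition 1]{HKNV1} (see also \cite{HKNV2,KV}) --- so what you have written is essentially a self-contained reconstruction of the standard argument in that reference. Your route is the expected one: the RKHS identity $e(Q_{{\bf c},X_n},H,{\rm INT})=\big\|h_0-\sum_{j=1}^n c_j K(\x_j,\cdot)\big\|_H$, minimisation over the weights to obtain $\inf_{{\bf c}}e(Q_{{\bf c},X_n},H,{\rm INT})^2=\|h_0\|_H^2-\mathbf h^T K_{X_n}^{+}\mathbf h$, and then the finite-dimensional equivalence $K_{X_n}-\alpha\,\mathbf h\mathbf h^T\succeq 0\ \Leftrightarrow\ \mathbf h^T K_{X_n}^{+}\mathbf h\le\alpha^{-1}$, quantified over all choices of $n$ nodes. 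The delicate points are all handled correctly: your observation that $\mathbf h$ is orthogonal to $\ker K_{X_n}$ (because ${\bf c}^TK_{X_n}{\bf c}=\|\sum_j c_jK(\x_j,\cdot)\|_H^2$, so ${\bf c}\in\ker K_{X_n}$ forces $\sum_j c_jK(\x_j,\cdot)=0$ and hence ${\bf c}^T\mathbf h=0$) is exactly what legitimises the Moore--Penrose inverse, the test vector $\mathbf v=K_{X_n}^{+}\mathbf h$ in the ``only if'' direction, and the Cauchy--Schwarz step in the ``if'' direction; and you read the quantifiers in the intended way (the matrix condition holding for \emph{all} point sets is equivalent to the lower bound on $e_n(H)$, not a pointwise-in-$X_n$ equivalence). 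In short, your proposal supplies, correctly, the proof that the paper only cites.
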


The third lemma was obtained by J. Vyb\'iral in  \cite{V} and
shows
 that the Schur product of a positive semi-definite matrix with itself
 is not only positive semi-definite but also bounded from below by some rank-1 matrix.

\begin{lem}(See \cite[Theorem 1]{V}.) \label{lem2.5}
Let $M\in {\Bbb R}^{n\times n}$ be a  positive semi-definite
matrix. Then
$$M\circ M \succeq \frac{1}{n}({\rm diag} \,M)({\rm diag} \, M)^{T}.$$

\end{lem}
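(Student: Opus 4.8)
The plan is to reformulate the matrix inequality as a scalar quadratic-form estimate and then prove it by a single Cauchy--Schwarz argument after a suitable Hilbert--Schmidt embedding. Unwinding the partial ordering, the claim $M\circ M\succeq \frac1n({\rm diag}\,M)({\rm diag}\,M)^T$ is equivalent to
$$\sum_{j,k=1}^n c_jc_k M_{j,k}^2\ \ge\ \frac1n\Big(\sum_{j=1}^n c_j M_{j,j}\Big)^2\qquad\text{for every }c=(c_1,\dots,c_n)^T\in\mathbb{R}^n,$$
so it suffices to establish this inequality. Since $M$ is symmetric positive semi-definite, there are vectors $a_1,\dots,a_n\in\mathbb{R}^n$ with $M_{j,k}=\langle a_j,a_k\rangle$; the convenient choice is to take the $a_j$ to be the columns of the symmetric positive square root $M^{1/2}$, so that the factor stays square and the column vectors remain in $\mathbb{R}^n$. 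In particular $M_{j,j}=\|a_j\|^2$.

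The key step is to pass to the space of $n\times n$ matrices equipped with the Frobenius (Hilbert--Schmidt) inner product $\langle X,Y\rangle_F:=\operatorname{tr}(X^TY)$. Associating to each node the rank-one matrix $B_j:=a_ja_j^T$, one has the two elementary identities
$$M_{j,k}^2=\langle a_j,a_k\rangle^2=\langle B_j,B_k\rangle_F,\qquad M_{j,j}=\|a_j\|^2=\operatorname{tr}(B_j)=\langle B_j,I_n\rangle_F,$$
where $I_n$ is the identity matrix. Setting $V:=\sum_{j=1}^n c_jB_j$, the left-hand side of the target inequality becomes $\|V\|_F^2$ and the inner sum becomes $\langle V,I_n\rangle_F$. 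The result then follows at once from the Cauchy--Schwarz inequality in $\langle\cdot,\cdot\rangle_F$ together with the computation $\|I_n\|_F^2=\operatorname{tr}(I_n)=n$:
$$\Big(\sum_{j=1}^n c_jM_{j,j}\Big)^2=\langle V,I_n\rangle_F^2\le \|V\|_F^2\,\|I_n\|_F^2=n\sum_{j,k=1}^n c_jc_kM_{j,k}^2.$$

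I expect the main conceptual obstacle to be locating the right embedding in the key step: recognizing that the Hadamard square of a Gram matrix is again a Gram matrix --- namely the Gram matrix of the outer products $a_ja_j^T$ in Hilbert--Schmidt space --- and that the diagonal of $M$ is exactly the inner product of these outer products with the identity. Once this is seen, the argument is forced, and the constant $1/n$ is dictated precisely by $\|I_n\|_F^2=n$; it is here that the square $n\times n$ factorization matters, since a factorization through some $\mathbb{R}^m$ would only yield the weaker constant $1/m$. Note also that no positivity of the individual $c_j$ is used, so the estimate holds as a genuine matrix (partial-ordering) statement, as claimed.
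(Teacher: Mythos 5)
Your proof is correct. The paper does not actually prove Lemma~\ref{lem2.5} --- it quotes it from \cite[Theorem 1]{V} --- and your argument (factor $M$ through its symmetric square root, recognize $M\circ M$ as the Gram matrix of the outer products $a_ja_j^T$ in the Frobenius inner product, then apply Cauchy--Schwarz against $I_n$ with $\|I_n\|_F^2=n$) is essentially Vyb\'iral's original proof, so it matches the cited source's approach.
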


\subsection{Weighted least squares operators and least squares quadrature rules
}\

First we give the definition of $L_2$ MZ families.
\begin{defn}Let $\mathcal X  =\{\mathcal X_n\}= \{\x_{n,k},\ n=1,2,\dots, k = 1,\dots,
l_n\}$ be a doubly-indexed set of points in $\ss$ and $\tau=
\{\tau_{n,k} : n=1,2,\dots, \, k = 1,\dots, l_n\}$ be a family of
positive numbers. Then $\mathcal X $ is called an $L_2$
Marcinkiewicz-Zygmund (MZ) family with associated weight $\tau$,
if there exist constants $A, B
> 0$ independent of $n$ such that \begin{equation}\label{2.33-1}A\|p\|_2^2\le \sum_{k=1}^{l_n}|p(\x_{n,k})|^2\tau_{n,k}\le B\|p\|_2^2\ \qquad {\rm
for\ all}\ p\in\Pi_n^{d+1}. \end{equation} The ratio $\kappa =
B/A$ is the global condition number of the $L_2$ MZ family
$\mathcal X$, and $\mathcal X _n = \{\x_{n,k} : k = 1, \dots,
l_n\}$ is the $n$th layer of $\mathcal X$.\end{defn}

The existence of $L_2$ MZ families on $\ss$ (including the case
$\kappa=1$) has been established in \cite{MNW, BD, DaiX, MO, MP}.
Additionally, necessary or sufficient density conditions for $L_2$
MZ families on $\ss$ have been discussed in \cite{MP, M}.

 Let
$\mathcal X$ be an $L_2$ MZ family with associated weight $\tau$.
Given the samples $\{ f (\x_{N,k} )\}$ of a continuous function
$f$ on $\ss$ on the $N$th layer $\mathcal X_N$, Gr\"ochenig in
\cite{Gr} introduced the weighted least squares polynomials
$L_N^{\Bbb S}(f)$ to approximate $f$ using only these samples,
i.e., the  solution of weighted least squares problems:
\begin{equation}\label{2.33-2}L_{N}^{\Bbb S}(f) =\arg \min_{p\in\Pi_N^{d+1}}\
\sum_{k=1}^{l_N}|f(\x_{N,k})-p(\x_{N,k})|^2\tau_{N,k}.\end{equation}
This procedure yields  a sequence $\{L_{N}^{\Bbb S}(f)\}$ of the
best weighted $\ell_2$-approximation of the data $\{
f(\x_{N,k})\}$ by   a
  spherical polynomial in $\Pi_N^{d+1}$ for every $f\in
C(\ss)$. We call  $L_{N}^{\Bbb S}$   the weighted least squares
operator.

It follows from \eqref{2.33-1} that for $p\in \Pi_N^{d+1}$, $p=0$
whenever $p(\x_{N,k})=0$. This means that usually $\mathcal X_N$
contains more than $d_N=\dim \Pi_N^{d+1}$ points, so that it is
not an interpolating set for $\Pi_N^{d+1}$. Therefore,
$L_{N}^{\Bbb S}(f)$ is usually  a quasi-interpolant.

We use the weighted discretized inner product
\begin{equation}\label{4.2.20}\langle
f,g\rangle_{(N)}:=\sum_{k=1}^{l_N}f(\x_{N,k})g(\x_{N,k})\tau_{N,k}\end{equation}
and the discretized norm $$\|f\|_{(N)}^2=\langle
f,f\rangle_{(N)}.$$

We consider the corresponding orthogonal polynomial projection
$L_N^{\Bbb S}$ onto $\Pi_N^{d+1}$ with respect to the weighted
discretized inner product $\langle \cdot,\cdot\rangle_{(N)}$,
namely the weighted least squares polynomial $ L_{N}^{\Bbb S}(f)$
defined by
$$L_{N}^{\Bbb S} (f)=\arg\min_{p\in\Pi_N^{d+1}} \|f-p\|_{(N)}^2
=\arg \min_{p\in\Pi_N^{d+1}}\
\sum_{k=1}^{l_N}|f(\x_{N,k})-p(\x_{N,k})|^2\tau_{N,k}.$$ We shall
give  a formal   expression for  $L_{N}^{\Bbb S}(f)$ (see
\cite{LW}). Let $\varphi_i,\ i=1,\dots, d_N$ be an orthonormal
basis of $\Pi_N^{d+1}$ with respect to the weighted discrete
scalar product \eqref{4.2.20}, i.e.,
$$\langle
\varphi_i,\varphi_j\rangle_{(N)}=\delta_{i,j}=\Big\{\begin{array}{cl}
1,\ \
   & {\rm if}\ \ i=j,\\
0,\ \     & {\rm if}\ \ i\neq j,
\end{array} \ \ \  1\le i,j\le d_N.$$ We set
$$D_N(\x,\y)=\sum_{k=1}^{d_N}\varphi_k(\x)\varphi_k(\y).$$
Clearly, $D_N(\x,\y)$ is the reproducing kernel of $\Pi_{N}^{d+1}$
with respect to the weighted discrete scalar product
\eqref{4.2.20}, and the weighted  least squares polynomial
$L_{N}^{\Bbb S}(f)$ is just the orthogonal projection of $f$ onto
$\Pi_{N}^{d+1}$  with respect to the weighted discrete scalar
product \eqref{4.2.20}, i. e.,
\begin{equation}\label{4.2.21}L_{N}^{\Bbb S}(f)({\bf x})=\langle
f,D_N(\cdot,\x)\rangle_{(N)}=\sum_{k=1}^{l_N}
f(\x_{N,k})D_N(\x_{N,k},\x)\tau_{N,k}.\end{equation}

Obviously, $L_N^{\Bbb S}$ is  a linear operator satisfying
\begin{align}\label{2.33-3}
 \|f-L_{N}^{\Bbb S}(f)\|_{(N)}\le \|f-p\|_{(N)},\ \ {\rm for\ all}\ p\in\Pi_N^{d+1}.
\end{align}

Also, Gr\"ochenig  in \cite{Gr} used the frame theory
  to construct   \emph{least squares  quadrature rules}
$$
I_N^{\Bbb S}(f)=\sum_{k=1}^{l_N}w_{N,k}f({\bf x}_{N,k})
$$ induced by an $L_{2}$ MZ
  family.  Lu and Wang in \cite{LW} proved that
$$w_{N,k}=\tau_{N,k}\int_{\ss}D_N({\bf x},{\bf x}_{n,k})d\sz_d({\bf x}), \ \ \ I_N^{\Bbb S}(f)=\int_{\ss}L_{N}^{\Bbb S}(f)({\bf x}){d}\sz_d({\bf x}), $$
and
\begin{equation}\label{2.33-4}\left|\int_{\ss}f({\bf x}){ d}\sz_d({\bf x})-I_N^{\Bbb S}(f)\right|\le \left\|f-L_{N}^{\Bbb S}(f)\right\|_{L_2}.\end{equation}

Finally, we give the relation between  weighted least squares
polynomials and hyperinterpolation (see \cite{LW}).
Hyperinterpolation
 was originally introduced by I. H. Sloan (see \cite{S}).
It uses the Fourier orthogonal projection of a function which can
be expressed in the form of integrals, but approximates the
integrals used in the expansion by means of a positive quadrature
formula. In recent years, hyperinterpolation and its
generalization on $\ss$ has attracted much interest, and a great
number of interesting results
 have been obtained (see \cite{D2,  GS1,   HS,  R, R2, S, SW0, SW1, W2,  WS}).

  Assume that
$Q_N(f)=\sum\limits_{k=1}^{l_N}\tau_{N,k}f(\x_{N,k}),\
N=1,2,\dots$ is a sequence of positive quadrature formulas on
$\ss$ which are exact for $\Pi_{2N}^{d+1}$, i.e., $\tau_{N,k}>0$,
and for all $f\in \Pi_{2N}^{d+1}$,
$$\int_{\ss}f(\x)d\sz_d(\x)=Q_N(f)=\sum_{k=1}^{l_N}\tau_{N,k}f(\x_{N,k}).$$ Then
the family $\mathcal X=\{\x_{N,k}\}$ with weight $\tau$ is an $L_2$ MZ family with the
constants $A=B=1$ and the global condition number $\kappa$
 is equal to $1$. On the other hand, if the global condition number $\kappa$ of an $L_2$ MZ
family $\mathcal X=\{\x_{N,k}\}$ with  weight $\tau$ is equal to
$1$, then $\mathcal X$ determines a sequence of positive
quadrature formulas $Q_N(f)=\frac
1A\sum\limits_{k=1}^{l_N}\tau_{N,k}f(\x_{N,k})$ on $\ss$ which are
exact for $\Pi_{2N}^{d+1}$.

Let $Q_N(f)=\sum\limits_{k=1}^{l_N}\tau_{N,k}f(\x_{N,k}),\
N=1,2,\dots$ be a sequence of positive quadrature formulas on
$\ss$ which are exact for $\Pi_{2N}^{d+1}$.  The
hyperinterpolation operator $H_N^{\Bbb S}$ on $\ss$ is defined by
\begin{equation*}H_{N}^{\Bbb S}(f)(\x)=\sum\limits_{k=1}^{l_N}\tau_{N,k}f(\x_{N,k})E_{N}(\x,\x_{N,k}),\
\ \ f\in C(\ss), \end{equation*} where $E_N(\x,\y)$ is the
reproducing kernel of $\Pi_N^{d+1}$ with respect to the inner
product of $L_2$. We note that for all $f,g\in\Pi_N^{d+1}$,
$$\langle f, g\rangle=\langle f,g\rangle_{(N)},$$
where $\langle \cdot,\cdot\rangle_{(N)}$ is defined by
\eqref{4.2.20}. This means that $$D_N(\x,\y)=E_N(\x,\y),\ \ \
H_N^{\Bbb S}(f)=L_N^{\Bbb S}(f),$$and
$$ I_N^{\Bbb S}(f)=Q_N(f).$$

Hence,  the hyperinterpolation
 is just  the weighted least squares polynomial for an $L_2$  MZ
family with the global condition number $\kappa=1$, and a sequence
of positive quadrature formulas $Q_{N}(f)$ on $\ss$ which are
exact for $\Pi_{2N}^{d+1}$ is just the least squares quadrature
for an $L_2$  MZ family with the global condition number
$\kappa=1$, and the weighted least squares polynomial and the
least squares quadrature may be viewed as a generalization of the
hyperinterpolation and a sequence of positive quadrature formulas
$Q_{N}(f)$ which are exact for $\Pi_{2N}^{d+1}$, respectively.

\section{Proofs of Theorems \ref{thm1.1} and \ref{thm1.2}}

Theorem \ref{thm1.1} follows from the following theorem
immediately.

\begin{thm}\label{thm3.1}
Let $\Lz=\{\lz_{\ell,d}\}_{\ell=0}^\infty$ be  a  nonnegative non-increasing
 sequence with $\lim\limits_{\ell\to\infty}\lz_{\ell,d}\,
\ell^{\az} \ln^{\beta} \ell=1$ for some $\az>0$ and $\beta\in \Bbb
R$. We have
\begin{equation}\label{3.1}\lim_{n\to\infty}n^{\az/d}(\ln n)^{\beta}a_n({\rm H}^{\Lz}(\ss))=\Big(\frac2{d\,!}\Big)^{\az/d}d^\beta.\end{equation}
\end{thm}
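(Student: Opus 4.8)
The plan is to reduce the problem to the known spectrum of the diagonal embedding $J\colon {\rm H}^\Lz(\ss)\hookrightarrow L_2(\ss)$ and then carry out a careful asymptotic count. Since $\{\lz_{\ell,d}\,Y_{\ell,k}\}$ is an orthonormal basis of ${\rm H}^\Lz(\ss)$ and $J(\lz_{\ell,d}\,Y_{\ell,k})=\lz_{\ell,d}\,Y_{\ell,k}$ has $L_2$-norm $\lz_{\ell,d}$, the singular values of $J$ are exactly the numbers $\lz_{\ell,d}$, each occurring with multiplicity $N(d,\ell)$; compactness is guaranteed by $\lz_{\ell,d}\to0$, which follows from the hypothesis since $\az>0$. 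As both spaces are Hilbert spaces, the approximation numbers coincide with the non-increasing rearrangement of these singular values (Schmidt representation), and because $\Lz$ is non-increasing by hypothesis no rearrangement is needed. Recalling $C(d,m)=\sum_{\ell=0}^{m}N(d,\ell)=\dim\Pi_m^{d+1}$, I would record the exact identity
\begin{equation*}
a_n({\rm H}^\Lz(\ss))=\lz_{m,d}\qquad\text{whenever}\qquad C(d,m-1)\le n<C(d,m),
\end{equation*}
with the convention $C(d,-1)=0$. (The standard index shift $a_n=s_{n+1}$ is immaterial for the limit, as will be clear below.) This converts \eqref{3.1} into a purely asymptotic statement about the staircase $n\mapsto\lz_{m,d}$.

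Next I would feed in the asymptotics of the dimension count. From \eqref{2.2}, $C(d,m)=\frac{(2m+d)(m+d-1)!}{m!\,d!}\sim\frac2{d!}\,m^{d}$ as $m\to\infty$. Hence every $n$ with $C(d,m-1)\le n<C(d,m)$ satisfies $n\sim\frac2{d!}\,m^{d}$, so that
\begin{equation*}
n^{\az/d}\sim\Big(\frac2{d!}\Big)^{\az/d}m^{\az},\qquad (\ln n)^{\beta}\sim(d\ln m)^{\beta}=d^{\beta}(\ln m)^{\beta}.
\end{equation*}
Combining with the hypothesis $\lz_{m,d}\,m^{\az}\ln^{\beta}m\to1$, that is $\lz_{m,d}\sim m^{-\az}(\ln m)^{-\beta}$, the three factors telescope:
\begin{equation*}
n^{\az/d}(\ln n)^{\beta}a_n({\rm H}^\Lz(\ss))\sim\Big(\frac2{d!}\Big)^{\az/d}m^{\az}\cdot d^{\beta}(\ln m)^{\beta}\cdot m^{-\az}(\ln m)^{-\beta}=\Big(\frac2{d!}\Big)^{\az/d}d^{\beta},
\end{equation*}
which is precisely the target constant.

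The step demanding genuine care — and the main obstacle — is the uniformity of these estimates as $n$ ranges over the whole block $[C(d,m-1),C(d,m))$ on which $a_n$ is frozen at the single value $\lz_{m,d}$. The key fact is that consecutive dimension counts are asymptotically equal, $C(d,m-1)/C(d,m)\to1$, which follows from $(m-1)^d/m^d\to1$; consequently the relation $n\sim\frac2{d!}m^d$, and with it the asymptotics of $n^{\az/d}$ and $(\ln n)^{\beta}$, hold \emph{uniformly} for all $n$ in the block. To make this rigorous I would fix $\ve>0$, choose $m_0$ so large that for every $m\ge m_0$ and every $n$ in the corresponding block the three ratios $\lz_{m,d}\big/\big(m^{-\az}(\ln m)^{-\beta}\big)$, $n^{\az/d}\big/\big((2/d!)^{\az/d}m^{\az}\big)$ and $(\ln n)^{\beta}\big/\big(d^{\beta}(\ln m)^{\beta}\big)$ each lie within $\ve$ of $1$, and then let $n\to\infty$ (equivalently $m\to\infty$). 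Since the blocks exhaust $\NN$ and their lengths are negligible relative to their position, this yields the limit \eqref{3.1}. I expect the only nontrivial bookkeeping to be the verification that the constant inside the logarithm and the leading coefficient of $C(d,m)$ combine to give exactly $(2/d!)^{\az/d}d^{\beta}$; everything else is routine.
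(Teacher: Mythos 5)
Your proposal is correct and takes essentially the same route as the paper: the paper's proof also rests on the exact staircase identity $a_n({\rm H}^{\Lz}(\ss))=\lz_{m,d}$ on the dimension blocks determined by $C(d,m)$ (quoted there as Lemma 3.2 from [CW], which you rederive via the Schmidt representation of the embedding), followed by the asymptotics $C(d,m)\sim \frac{2}{d!}m^{d}$ and $\ln C(d,m)\sim d\ln m$. The only cosmetic difference is in the last step, where the paper squeezes $n^{\az/d}(\ln n)^{\beta}a_n$ between its values at the block endpoints using monotonicity of $x^{\az/d}(\ln x)^{\beta}$ (plus its Lemma 3.3 on ratios of logarithms), while you obtain the same conclusion by observing that the asymptotics hold uniformly over each block since $C(d,m-1)/C(d,m)\to 1$.
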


In order to prove Theorem \ref{thm3.1} we need the following two
lemmas.
\begin{lem} (See \cite[Theorem 2.8]{CW}.) \label{lem3.2-0} Let $\Lz=\{\lz_{\ell,d}\}_{\ell=0}^\infty$
be  a nonnegative non-increasing
 sequence with
$\lim\limits_{\ell\to\infty}\lz_{\ell,d}=0$. For $C(d,\ell-1)<n\le
C(d,\ell),\ \ell=0, 1,2,\dots,$ we have
$$a_n({\rm H}^\Lz(\ss)))=\lz_{\ell,d},
$$where we set $C(d,-1)=0$. \end{lem}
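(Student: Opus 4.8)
The plan is to reduce the computation of $a_n({\rm H}^\Lz(\ss))$ to the spectral theory of the natural embedding operator and then to count multiplicities. First I would observe that, by Definition~\ref{d2.1} and the formula for $\langle\cdot,\cdot\rangle_{{\rm H}^\Lz(\ss)}$ displayed just after it, the system $\{e_{\ell,k} := \lz_{\ell,d}\, Y_{\ell,k} : \ell\ge 0,\ 1\le k\le N(d,\ell)\}$ is an orthonormal basis of ${\rm H}^\Lz(\ss)$, since a direct computation gives $\langle e_{\ell,k}, e_{\ell',k'}\rangle_{{\rm H}^\Lz(\ss)} = \delta_{\ell\ell'}\delta_{kk'}$. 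Let $J:{\rm H}^\Lz(\ss)\to L_2(\ss)$ denote the identity embedding. Then $Je_{\ell,k} = \lz_{\ell,d}\,Y_{\ell,k}$ as an element of $L_2$, so $\{Je_{\ell,k}\}$ is an orthogonal family in $L_2$ with $\|Je_{\ell,k}\|_{L_2} = \lz_{\ell,d}$; equivalently $J^\ast J\, e_{\ell,k} = \lz_{\ell,d}^2\, e_{\ell,k}$. Thus $J$ is a compact diagonal operator (compact because $\lz_{\ell,d}\to 0$) whose singular values, listed with multiplicity, are exactly the numbers $\lz_{\ell,d}$, each occurring $N(d,\ell)$ times.

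Next I would invoke the standard Hilbert-space fact that the approximation numbers of a compact operator coincide with its singular values arranged in non-increasing order. Since $\{\lz_{\ell,d}\}$ is already non-increasing, no rearrangement is needed: the block corresponding to degree $\ell$ occupies the positions $C(d,\ell-1)+1,\dots,C(d,\ell)$, because $\sum_{j=0}^\ell N(d,j)=C(d,\ell)$ by \eqref{2.2}, with the convention $C(d,-1)=0$. Hence the relevant singular value equals $\lz_{\ell,d}$ precisely when $C(d,\ell-1)<n\le C(d,\ell)$, which is the asserted value of $a_n({\rm H}^\Lz(\ss))$.

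For completeness I would verify the two inequalities directly in this concrete diagonal setting. For the upper bound, fix $n$ with $C(d,\ell-1)<n\le C(d,\ell)$ and approximate using the functionals $f\mapsto \langle f, e_{\ell',k'}\rangle_{{\rm H}^\Lz(\ss)}$ together with the functions $Je_{\ell',k'}$ attached to the first indices: the residual of any $f$ with $\|f\|_{{\rm H}^\Lz(\ss)}\le 1$ is a tail of the orthogonal expansion whose $L_2$-norm is controlled by the largest surviving singular value $\lz_{\ell,d}$, giving $a_n\le \lz_{\ell,d}$. For the lower bound I use a dimension argument: any admissible approximant has rank strictly less than $n$, so its kernel meets the $n$-dimensional space $V={\rm span}\{e_{\ell',k'}:\text{first }n\text{ indices}\}$ in a nonzero vector $v$; normalizing $\|v\|_{{\rm H}^\Lz(\ss)}=1$ and writing $v=\sum c_{\ell',k'}e_{\ell',k'}$, one gets $\|Jv\|_{L_2}^2=\sum c_{\ell',k'}^2\,\lz_{\ell',d}^2\ge \lz_{\ell,d}^2\sum c_{\ell',k'}^2=\lz_{\ell,d}^2$, since the smallest singular value among the first $n$ indices is $\lz_{\ell,d}$ exactly on this range. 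This yields $a_n\ge \lz_{\ell,d}$.

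The argument is essentially routine spectral theory, so I do not expect any analytic difficulty; the one point that must be handled with care is the index bookkeeping. Specifically, one must check that the smallest singular value among the first $n$ equals $\lz_{\ell,d}$ exactly on the half-open interval $C(d,\ell-1)<n\le C(d,\ell)$, and that the convention on the number of approximating terms in the definition of $a_n$ is tracked consistently between the upper and lower bounds so that both produce the same index $\ell$. I expect this counting step, rather than any estimate, to be the main thing to get right.
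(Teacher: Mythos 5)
Your proposal is correct and is essentially the proof behind the cited result (the paper itself gives no proof, deferring to \cite[Theorem 2.8]{CW}): diagonalize the embedding $J:{\rm H}^\Lz(\ss)\to L_2(\ss)$ via the orthonormal basis $\{\lz_{\ell,d}Y_{\ell,k}\}$, read off the singular values $\lz_{\ell,d}$ with multiplicity $N(d,\ell)$, and use $\sum_{j=0}^{\ell}N(d,j)=C(d,\ell)$ to locate the block $C(d,\ell-1)<n\le C(d,\ell)$. The one caveat is exactly the bookkeeping point you flagged yourself: your lower-bound step (``any admissible approximant has rank strictly less than $n$'') tacitly adopts the operator-theoretic convention $a_n=\inf\{\|J-A\|:\operatorname{rank}A<n\}$, under which the stated formula is exact and which is the convention of \cite{CW}, whereas the paper's Section~1 definition permits $n$ functionals (rank $\le n$) and would yield $a_n=s_{n+1}(J)$, shifting the formula by one at block boundaries --- an internal inconsistency of the paper that is harmless for all its asymptotic conclusions.
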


\begin{lem} \label{lem3.3-0} Let $\{a_n\},\ \{b_n\}$ be two positive sequences such
that
$$\lim_{n\to\infty}\frac{b_n}{a_n}=A\neq 0\ \ {\rm and}\ \  \lim_{n\to\infty}a_n=+\infty.$$
Then we have $$\lim_{n\to\infty}\frac{\ln b_n}{\ln a_n}=1.$$\end{lem}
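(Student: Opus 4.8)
The plan is to reduce everything to the elementary decomposition $\ln b_n = \ln a_n + \ln(b_n/a_n)$ and then control the ratio term. First I would observe that since $a_n>0$ and $b_n>0$ for all $n$, the quotient $b_n/a_n$ is positive, so its limit $A$ satisfies $A\ge 0$; combined with the hypothesis $A\neq 0$ this forces $A>0$, and hence $\ln A$ is a well-defined finite real number. This is the only point requiring any care, and it is where the positivity assumption on the two sequences is genuinely used.

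Next I would record that $a_n\to+\infty$ implies $\ln a_n\to+\infty$, so in particular there is an index $n_0$ with $\ln a_n>0$ for all $n\ge n_0$; for such $n$ the ratio $\ln b_n/\ln a_n$ is meaningful and I may divide by $\ln a_n$ freely. Writing
\begin{equation*}
\frac{\ln b_n}{\ln a_n}=1+\frac{\ln\bigl(b_n/a_n\bigr)}{\ln a_n},\qquad n\ge n_0,
\end{equation*}
I reduce the claim to showing that the second summand tends to $0$.

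Finally, the continuity of the logarithm together with $b_n/a_n\to A>0$ gives $\ln(b_n/a_n)\to\ln A$, a finite limit, while $\ln a_n\to+\infty$. Therefore the numerator stays bounded and the denominator diverges, so $\ln(b_n/a_n)/\ln a_n\to 0$, and passing to the limit in the displayed identity yields $\lim_{n\to\infty}\ln b_n/\ln a_n=1$, as claimed.

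There is no real obstacle here: the statement is an elementary consequence of the additivity of the logarithm and the divergence of $\ln a_n$. The only subtlety worth flagging is the verification that $A>0$ (not merely $A\neq 0$), since $A$ a priori could only be shown to be nonnegative from positivity of the sequences; the hypothesis rules out $A=0$ and thus keeps $\ln A$ finite.
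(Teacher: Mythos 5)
Your proof is correct and takes essentially the same route as the paper: both rest on the decomposition $\ln b_n = \ln a_n + \ln(b_n/a_n)$ and the divergence of $\ln a_n$, the only difference being that the paper controls $\ln(b_n/a_n)$ by an explicit $\varepsilon$-sandwich ($\ln(A-\varepsilon) < \ln(b_n/a_n) < \ln(A+\varepsilon)$ for large $n$) while you invoke continuity of the logarithm to get $\ln(b_n/a_n)\to\ln A$ directly. Your explicit remark that positivity of the sequences together with $A\neq 0$ forces $A>0$ is a point the paper's proof uses implicitly (when it chooses $\varepsilon\in(0,A)$) without comment, so your write-up is, if anything, slightly more careful.
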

\begin{proof}
According to $\lim\limits_{n\to\infty}\frac{b_n}{a_n}=A$, we can
get for arbitrary $\varepsilon\in(0,A)$, there is a positive
integer $N$ such that $n> N$,
$$\big|\frac{b_n}{a_n} - A \big|< \varepsilon.$$
This means that  $$\ln a_n+\ln(A - \varepsilon) < \ln {b_n} < \ln
a_n+\ln(A + \varepsilon) ,$$which yields that $$1+\frac{\ln((A-
\varepsilon) )}{\ln a_n }<\frac{\ln b_n}{\ln a_n}<1+\frac{\ln(A+
\varepsilon) )}{\ln a_n}.$$ Since
$\lim\limits_{n\to\infty}a_n=+\infty$, we  obtain  that
$$\lim_{n\to\infty}\frac{\ln b_n}{\ln a_n}=1.$$\end{proof}

\noindent{\it Proof of Theorem \ref{thm3.1}}\

By Lemma \ref{lem3.2-0}, we have for $C(d,\ell-1)<n\le C(d,\ell),\
\ell=0,1,2,\dots,$
$$a_n({\rm H}^{\Lz}(\ss))=\lz_{\ell,d}.$$
Since the function $x^{\az/d}(\ln x)^\beta\ (\az>0)$ is increasing
for sufficiently large $x$, it follows that for sufficiently large
$\ell$,
\begin{align}(C(d,\ell-1))^{\alpha/d}(\ln (C(d,\ell-1)))^{\beta} \lz_{\ell,d}
&< n^{\alpha/d}(\ln n)^{\beta}a_n({\rm H}^{\Lz}(\ss))\notag\\
&\le (C(d,\ell))^{\alpha/d}(\ln (C(d,\ell)))^{\beta}
\lz_{\ell,d}.\label{3.2}\end{align} We recall from \eqref{2.2}
that
$$C(d,\ell)= \frac{(2\ell+d)(\ell+d-1)\,!}{\ell\,!\,d\,!},
$$which yields that
$$\lim_{\ell\to\infty}\frac{C(d,\ell)}{\ell^d}
=\lim_{\ell\to\infty}\frac{C(d,\ell-1)}{\ell^d}=\frac 2{d\,!}.$$
It follows from Lemma \ref{lem3.3-0} that
$$\lim_{\ell\to\infty}\frac{\ln(C(d,\ell))}{\ln(\ell^d)}
=\lim_{\ell\to\infty}\frac{\ln(C(d,\ell-1))}{\ln(\ell^d)}=1.$$
We obtain
\begin{align}
& \quad \ \lim_{\ell\to\infty}(C(d,\ell-1))^{\alpha/d}(\ln (C(d,\ell-1)))^{\beta} \lz_{\ell,d}\notag\\
&=\lim_{\ell\to\infty}(\frac{C(d,\ell)}{\ell^d})^{\alpha/d}\Big(\frac{\ln
(C(d,\ell-1))}
{\ln (\ell^d)}\Big)^{\beta} d^\beta \ell^{\alpha} (\ln \ell)^\beta  \lz_{\ell,d}\notag\\
&=\Big(\frac2{d\,!}\Big)^{\alpha/d} d^\beta,\label{3.3}
\end{align}
and \begin{equation}\lim_{\ell\to\infty}(C(d,\ell))^{\alpha/d}(\ln
(C(d,\ell)))^{\beta}
\lz_{\ell,d}=\Big(\frac2{d\,!}\Big)^{\alpha/d}
d^\beta.\label{3.4}\end{equation} Clearly, $n\to\infty$ if and
only if $\ell\to\infty$. Equation \eqref{3.1} follows from
\eqref{3.2}, \eqref{3.3} and \eqref{3.4} immediately. This
completes the proof of Theorem \ref{thm3.1}. $\hfill\Box$

\begin{cor}
Let $\Lz=\{\lz_{\ell}\}_{\ell=0}^\infty$ be  a nonnegative non-increasing
 sequence with $\lz_{\ell} \asymp \ell^{-\az} \ln^{-\beta}
\ell$ for some $\az>0$ and $\beta\in \Bbb R$. We have
$$a_n({\rm H}^{\Lz}(\ss))\asymp n^{-\az/d}\ln^{-\beta}n.$$
\end{cor}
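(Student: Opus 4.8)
The plan is to obtain the two-sided bound directly from the exact evaluation in Lemma~\ref{lem3.2-0}, together with the polynomial growth of $C(d,\ell)$; note that the sharp-constant statement Theorem~\ref{thm3.1} does not apply here, since its hypothesis $\lz_\ell\,\ell^\az\ln^\beta\ell\to1$ is strictly stronger than the two-sided assumption $\lz_\ell\asymp\ell^{-\az}\ln^{-\beta}\ell$. First I would fix $n$ and let $\ell=\ell(n)$ be the unique index with $C(d,\ell-1)<n\le C(d,\ell)$; such an $\ell$ exists and is unique because $\{C(d,\ell)\}_{\ell\ge0}$ is strictly increasing with $C(d,-1)=0$. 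Lemma~\ref{lem3.2-0} then gives the identity $a_n({\rm H}^\Lz(\ss))=\lz_\ell$.

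Next I would convert the defining inequalities into size comparisons between $n$ and $\ell$. Using $C(d,\ell)\asymp\ell^d$ and $C(d,\ell-1)\asymp\ell^d$ (both follow from $\lim_{\ell\to\infty}C(d,\ell)/\ell^d=\frac{2}{d\,!}$, as recorded before the proof of Theorem~\ref{thm3.1}), the squeeze $C(d,\ell-1)<n\le C(d,\ell)$ yields $n\asymp\ell^d$, hence $\ell\asymp n^{1/d}$ as $n\to\infty$. For the logarithmic factor I would invoke Lemma~\ref{lem3.3-0} with $a_\ell=\ell^d$ and $b_\ell=C(d,\ell)$ to obtain $\ln C(d,\ell)\sim d\ln\ell$, and therefore $\ln n\asymp\ln\ell$, with both quantities tending to $+\infty$.

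Finally, substituting these comparisons into the hypothesis gives $a_n({\rm H}^\Lz(\ss))=\lz_\ell\asymp\ell^{-\az}\ln^{-\beta}\ell\asymp n^{-\az/d}\ln^{-\beta}n$, which is the assertion. The only step needing a little care is the treatment of $\ln^{-\beta}\ell$ when $\beta<0$: because $\ln\ell$ and $\ln n$ are comparable and both bounded below by a positive constant for large $n$, the equivalence $\ln^{-\beta}\ell\asymp\ln^{-\beta}n$ holds for every real $\beta$. I do not expect any genuine obstacle; the corollary is essentially a restatement of Lemma~\ref{lem3.2-0} under the weaker two-sided hypothesis, the mild bookkeeping about the index $\ell$ and the logarithm being the only things to verify.
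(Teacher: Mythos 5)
Your proof is correct and follows essentially the same route the paper intends: the corollary is stated without proof as an immediate consequence of Lemma~\ref{lem3.2-0} together with the index-conversion facts $C(d,\ell)\asymp\ell^d$ and $\ln C(d,\ell)\sim d\ln\ell$ (via Lemma~\ref{lem3.3-0}) that are established inside the proof of Theorem~\ref{thm3.1}, which is exactly the argument you give. Your explicit handling of the case $\beta<0$ and the observation that Theorem~\ref{thm3.1} itself cannot be invoked directly are correct refinements of that implicit argument.
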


\

Next we show Theorem \ref{thm1.2}. In order to prove Theorem
\ref{thm1.2} we need the following lemma.

\begin{lem}(See \cite[Proposition 3.5]{DW}.) \label{lem4.1-1} Let $X$ be a linear subspace of $\Pi_n^{d+1}$ with $\dim X\ge \vz \dim\Pi_n^{d+1}$ for some $\vz\in(0, 1)$.
Then there exists a function $f\in X$ such that
$$\|f\|_{L_p}\asymp \|f\|_{L_\infty} \asymp 1, \ \ \  1\le p<\infty,$$
where $$\|f\|_{L_\infty}=\max_{\x\in\ss}|f(\x)|,\ \ {\rm and}\ \
\|f\|_{L_p}^p=\int_{\ss}|f(\x)|^pd\sz_d(\x),\ 1\le
p<\infty.$$\end{lem}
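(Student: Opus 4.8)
The content of the statement is a lower bound: since $\sigma_d$ is a probability measure we always have $\|f\|_{L_p}\le\|f\|_{L_\infty}$, and $p\mapsto\|f\|_{L_p}$ is nondecreasing, so after normalizing $\|f\|_{L_\infty}=1$ it suffices to produce $f\in X$ with $\|f\|_{L_1}\gtrsim1$. By the elementary inequality $\|f\|_{L_2}^2\le\|f\|_{L_1}\|f\|_{L_\infty}$ this is in turn implied by $\|f\|_{L_2}\gtrsim1$. Thus the whole problem reduces to finding a single \emph{flat} polynomial $f\in X$ with $\|f\|_{L_2}\ge c(d,\vz)\,\|f\|_{L_\infty}$, the constant $c$ being independent of $n$. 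The plan is to discretize, solve a finite-dimensional convex-geometry problem, and pull back.

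First I would fix a positive-weight Marcinkiewicz--Zygmund family for $\Pi_n^{d+1}$ (its existence on $\ss$ is recorded in the references of Subsection 2.3), with nodes $\x_1,\dots,\x_M$, $M\asymp n^d\asymp C(d,n)$, and weights $\tau_k\asymp 1/M$, so that for every $p\in\Pi_n^{d+1}$ one has $\|p\|_{L_2}^2\asymp\sum_k\tau_k|p(\x_k)|^2\asymp\frac1M\sum_k|p(\x_k)|^2$ together with the companion Nikolskii-type equivalence $\|p\|_{L_\infty}\asymp\max_k|p(\x_k)|$. The sampling map $p\mapsto v(p):=(p(\x_1),\dots,p(\x_M))$ is then injective on $\Pi_n^{d+1}$ by the lower Marcinkiewicz--Zygmund bound, so it carries $X$ isomorphically onto a subspace $V=v(X)\subseteq\RR^M$ with $\dim V=\dim X\ge\vz\,C(d,n)\ge\vz' M$ for some $\vz'\asymp\vz$. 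Under these equivalences the desired flatness $\|f\|_{L_2}\gtrsim\|f\|_{L_\infty}$ becomes the purely finite-dimensional assertion that $V$ contains a vector $v$ with $\|v\|_{\ell_2}\gtrsim\sqrt M\,\|v\|_{\ell_\infty}$.

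To find such a spread vector I would run a volume argument rather than a probabilistic one. Normalizing so that $\|v\|_{\ell_\infty}\le1$, the set $V\cap[-1,1]^M$ is a central $m$-dimensional section of the cube, $m=\dim V$. By Vaaler's theorem every central $m$-dimensional section of $[-1,1]^M$ has $m$-dimensional volume at least $2^m$, while $V\cap[-1,1]^M$ is contained in the Euclidean ball of $V$ of radius $R:=\sup_{v\in V,\,\|v\|_{\ell_\infty}\le1}\|v\|_{\ell_2}$, whose volume is $R^m\,\mathrm{vol}(B_2^m)$. Comparing the two and using $\mathrm{vol}(B_2^m)^{1/m}\asymp m^{-1/2}$ gives $R\gtrsim\sqrt m\asymp\sqrt M$. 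Hence there is $v\in V$ with $\|v\|_{\ell_\infty}\le1$ and $\|v\|_{\ell_2}\gtrsim\sqrt M$; pulling this back through the sampling isomorphism yields $f\in X$ with $\|f\|_{L_\infty}\asymp1$ and $\|f\|_{L_2}\gtrsim1$, which by the first paragraph completes the proof.

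The main obstacle is precisely the $n$-uniformity of the flatness constant in the finite-dimensional step. The naive constructions --- a random $\pm1$ combination of an $L_2$-orthonormal basis of $X$, or the projection of the constant function onto $X$ --- control $\|f\|_{L_\infty}$ only through a maximum over $\asymp n^d$ nodes and therefore lose a factor $\sqrt{\log n}$, which would be fatal for the equivalence claimed in the lemma. The volume bound (Vaaler) sidesteps this logarithm and produces a genuinely flat element with a constant depending only on $d$ and $\vz$. The remaining work is bookkeeping: checking that the Marcinkiewicz--Zygmund equivalence constants, the weight normalization $\tau_k\asymp1/M$, and the ratio $M/C(d,n)$ are all controlled independently of $n$, so that every $\asymp$ above hides only constants depending on $d$ and $\vz$ (and, through the final monotonicity step, on $p$).
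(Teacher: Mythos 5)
You should first note what ``the paper's proof'' is here: the paper does not prove this lemma at all; it is imported verbatim from \cite[Proposition 3.5]{DW}. So there is no internal argument to compare against, and your proposal has to be judged as a free-standing proof. As such it is correct, and it is in fact the standard route for results of this type (the one underlying \cite{DW}, going back to Kashin--Temlyakov-style volume estimates): reduce everything to producing one flat element via $\|f\|_{L_2}^2\le\|f\|_{L_1}\|f\|_{L_\infty}$ and monotonicity of $L_p$-norms under a probability measure; discretize by Marcinkiewicz--Zygmund sampling so that flatness becomes a statement about a subspace $V\subseteq\mathbb{R}^M$ of proportional dimension; and get the spread vector from Vaaler's theorem on central sections of the cube, which is exactly the device that avoids the $\sqrt{\log n}$ loss of the naive random-sign or kernel-projection constructions. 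Your bookkeeping is right: $\dim V\ge\varepsilon' M$ with $\varepsilon'$ depending only on $d,\varepsilon$; the section--ball volume comparison gives $v\in V$ with $\|v\|_{\ell_\infty}\le1$, $\|v\|_{\ell_2}\gtrsim\sqrt{M}$; pulling back yields $\|f\|_{L_\infty}\lesssim1$ and $\|f\|_{L_2}\gtrsim1$, and then $c\le\|f\|_{L_1}\le\|f\|_{L_p}\le\|f\|_{L_\infty}\le C$ after normalization, with constants that are even uniform in $p$.

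One point should be stated more carefully. The family you invoke is \emph{not} an arbitrary $L_2$ MZ family in the sense of the paper's Definition 2.6: that definition gives neither the weight normalization $\tau_k\asymp1/M$ nor the sup-norm equivalence $\|p\|_{L_\infty}\asymp\max_k|p(\x_k)|$, and a generic $L_2$ MZ family need not satisfy either. What you need is a quasi-uniform family, e.g.\ a maximal $\delta/n$-separated (hence $\delta/n$-dense) node set with Voronoi-type weights: separation plus density give the two-sided $L_2$ MZ inequality with weights $\asymp n^{-d}\asymp1/M$ (as in \cite{MNW, DaiX}), and for $\delta$ small the $L_\infty$ equivalence follows from the Bernstein gradient estimate $\|\nabla p\|_{L_\infty}\lesssim n\|p\|_{L_\infty}$ for $p\in\Pi_n^{d+1}$. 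With that substitution the argument is complete and all hidden constants depend only on $d$ and $\varepsilon$, as required.
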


\

\noindent{\it Proof of  Theorem \ref{thm1.2}.}

Let $\x_1,\dots,\x_n$ be any $n$ distinct points on $\Bbb S^d$.
Take a positive integer $N$ such that $2n\le d_N\le Cn$ and denote
$$X_0:=\big\{g\in\Pi_N^{d+1}\ \big|\ g(\x_j)=0\ {\rm for\ all}\
j=1,\dots,n\big\},$$where $d_N={\rm dim}\,\Pi_N^{d+1}$. Thus,
$n\asymp N^d$, and $X_0$ is a linear subspace of $\Pi_N^{d+1}$
with
$${\rm dim}\,X_0\ge  d_N-n\ge d_N/2
.$$ It follows from Lemma \ref{lem4.1-1} that there exists a
function $g_0\in X_0$ such that
$$\|g_0\|_{L_p}\asymp1,\ {\rm}\ {\rm for\ all}\
1\le p \le\infty.$$ Let
$$f_0(\x)=N^{-\az}\ln^{-\beta}N (g_0(\x))^2,\ \x\in\ss.$$ Then
$$f_0\in\Pi_{2N}^{d+1}, \ \ f_0(\x_1)=\dots=f_0(\x_n)=0, \ \
f_0(\x)\ge 0\ {\rm for}\ \x\in\ss,$$and
$$\int_{\ss}f_0(\x)d\sz_d(\x)=N^{-\az}\ln^{-\beta}N\|g_0\|_{L_2}^2\asymp
N^{-\az}\ln^{-\beta}N.$$

Now we show $\|f_0\|_{{\rm H}^{\az,\beta}(\ss)}\lesssim 1$. Take
$m\in\Bbb N$ such that $2^{m-1}\le N<2^m$. Then by \eqref{2.8-0}
and the fact that $E_{2^j}(f_0)_{L_2} \le \|f_0\|_{L_2}$ for $j\le
m+1$ and $E_{2^j}(f_0)_{L_2}=0$ for $j>m+1$, we have
\begin{align*}\|f_0\|_{{\rm H}^{\az,\beta}(\ss)}&\asymp
\|f_0\|_{L_2}+\Big(\sum_{j=1}^\infty 2^{2\az
j}j^{2\beta}E_{2^j}(f)_{L_2}^2\Big)^{1/2}\\&\lesssim
\|f_0\|_{L_2}+\Big(\sum_{j=1}^{m+1} 2^{2\az
j}j^{2\beta}\|f_0\|_{L_2}^2\Big)^{1/2}\\ &\lesssim 2^{\az
m}m^{\beta}\|f_0\|_{L_2}\lesssim \|g_0\|_{L_4}^2\lesssim 1
\end{align*}

It follows from \cite[Section 5, Chapter 4]{TWW} that
 \begin{align*}
   e_n({\rm H}^{\az,\beta}(\ss))&\ge \inf_{\substack{\x_1,\dots,\x_n\in \Bbb S^d}}
   \sup_{\substack{\|f\|_{{\rm H}^{\az,\beta}(\ss)}\le 1\\ f(\x_1)=\dots=f(\x_n)=0}}\left|\int_{\Bbb S^d}f({\bf x}){\rm d}\sz_d({\bf
   x})\right|\\&\gtrsim \inf_{\substack{\x_1,\dots,\x_n\in \Bbb
   S^d}} \left|\int_{\Bbb S^d}f_0({\bf x}){\rm d}\sz_d({\bf
   x})\right|\\ &\gtrsim N^{-\az}\ln^{-\beta}N\gtrsim
   n^{-\az/d}\ln^{-\beta} n.
 \end{align*}
The proof of Theorem \ref{thm1.2} is finished. $\hfill\Box$

\section{Lower bounds for  $e_n({\rm H}^{\az,\beta}(\ss))$ with $\az=d/2,\, \beta>1/2$}

We start with a definition of convolution of two sequences and
obtain  lower bounds for $e_n({\rm H}_{\widetilde{\Lambda}})$  for
a sequence $\widetilde{\Lambda}$ which is given as a sum of
convolution squares. Then we give lower bounds for $e_n({\rm
H}_{\widetilde{\Lambda}})$ for a  nonnegative non-increasing
sequence $\widetilde{\Lambda}$. Finally, we prove Theorem
\ref{thm1.3}.

Let $w_0(t):=c_0(1-t^2)^{(d-2)/2},\ d\ge 2$ is a weight function
on $[-1,1]$, where $c_0=(\int_{-1}^1(1-t^2)^{(d-2)/2}{\rm
d}t)^{-1}$. For any $f \in L_1([-1,1],w_0)$, we  define
$$\langle f\rangle:=\int_{-1}^1 f(t)w_0(t){\rm d}t.$$
Thus, $L_2([-1,1],w_0)$ is a Hilbert space with inner product
$$\langle f,g\rangle_{2,w_0}=\int_{-1}^1 f(t)g(t)w_0(t){\rm d}t=\langle f
g\rangle,$$ and norm
$$\|f\|_{2,w_0} =\left(\int_{-1}^1|f(t)|^2w_0(t){\rm d}t\right)^{1/2}=\sqrt{\langle f^2\rangle}.$$
Let $G_\ell,\, \ell=0,1,\dots,$ be the normalized Gegenbauer
polynomials given in \eqref{2.4}. Then $G_\ell,\ \ell=0,1,\dots$,
satisfy \begin{equation}\label{4.1-0}{\rm deg}\, G_\ell=\ell
 \ \ {\rm and}\ \ \langle f G_\ell \rangle =0 \ {\rm for\ all} \ f\in \mathcal P_{\ell-1},\end{equation}
where $\mathcal P_k$ is the space of algebraic polynomials of
degree at most $k$.
 It follows
from \cite{DX} that
\begin{equation}\label{4.1-1}h_\ell^2:=\|G_{\ell}\|^{2}_{2,w_0}=\frac{\ell! (\ell
+d-1)}{(d)_{\ell}(2\ell+d+1)} \, \, \asymp \ell^{-d+1}, \ \
(d)_\ell=d(d+1)\cdots(d+\ell-1).\end{equation}
 We have
\begin{align*}
  \frac{h_\ell^2}{h_{\ell+1}^2}&=\frac{\ell !(\ell+d-1)(d)_{\ell+1}(2\ell+d+3)}{(\ell+1) !(\ell+d)(d)_\ell(2\ell+d+1)}
  =\frac{(\ell+d-1)(2\ell+d+3)}{(\ell+1) (2\ell+d+1)}>1,
\end{align*}which means that $h_\ell^2$ is decreasing.
Obviously,  $\{G_\ell/h_\ell\}_{\ell=0}^\infty$ forms an
orthonormal basis of $L_2([-1,1],w_0)$. It follows that
\begin{align}
G_{\ell}G_{s}=\sum_{k=0}^{\infty}\big\langle G_{\ell}G_{s},
\frac{G_{k}}{h_k}\big\rangle_{2,w_0}\frac{G_{k}}{h_k}=\sum_{k=0}^{\infty}\frac{\langle
G_{\ell}G_{s} G_{k}\rangle}{h^{2}_k} G_{k}
=\sum_{k=0}^{\infty}C_k^{\ell,s}G_k, \label{4.1}
\end{align}
where $C_k^{\ell,s}:={\langle G_\ell G_sG_k\rangle}/h_k^2$.
Moreover, $C_k^{\ell,s}$ have the following properties.
\begin{prop}\label{prop2}For any $\ell,s,k=0,1,\dots$, we have
\begin{enumerate}
  \item $C_k^{\ell,s}=C_k^{s,\ell}$;
  \item $C_k^{\ell,s}\ge0$ if $|\ell-s|\le k\le\ell+s$;
  \item $C_k^{\ell,s}=0$ if $k>\ell+s$ and $k< |\ell-s|$;
  \item $\sum_{k=|\ell-s|}^{\ell+s}C_k^{\ell,s}=1$;
  \item $\sum_{k=|\ell-s|}^{\ell+s}\langle G_\ell G_sG_k\rangle^2/h_k^2=\langle G_\ell^2 G_s^2\rangle$.
\end{enumerate}
\end{prop}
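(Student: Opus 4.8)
The plan is to prove each of the five properties by exploiting the defining relation $C_k^{\ell,s}=\langle G_\ell G_s G_k\rangle/h_k^2$ together with the orthogonality relations \eqref{4.1-0} and the expansion \eqref{4.1}. First I would dispose of the degree-based properties (1)--(3), which are purely structural. Property (1) is immediate from the symmetry of the triple product $\langle G_\ell G_s G_k\rangle$ under permuting its factors. For property (3), note that $G_\ell G_s$ is a polynomial of degree exactly $\ell+s$; since $G_k$ is orthogonal to all polynomials of degree $\le k-1$ by \eqref{4.1-0}, the coefficient $\langle G_\ell G_s G_k\rangle$ vanishes whenever $k>\ell+s$. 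The lower bound $k\ge|\ell-s|$ follows by the same reasoning applied symmetrically: writing $C_k^{\ell,s}=C_\ell^{k,s}\cdot(h_\ell^2/h_k^2)$ via the symmetry of the triple integral, the factor $\langle G_k G_s G_\ell\rangle$ vanishes when $\ell<|k-s|$, i.e.\ when $k>\ell+s$ or $k<s-\ell$; combining the two symmetric statements forces $C_k^{\ell,s}=0$ for $k<|\ell-s|$.

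Property (2), the nonnegativity of $C_k^{\ell,s}$ in the range $|\ell-s|\le k\le\ell+s$, is the step I expect to be the main obstacle, since it is a genuine positivity statement (a linearization-of-products result for Gegenbauer polynomials) rather than a formal consequence of degree counting. The cleanest route is to invoke the classical \emph{linearization formula} for Gegenbauer polynomials: the product $G_\ell G_s$, when expanded in the basis $\{G_k\}$, has all nonnegative coefficients in the admissible index range. This is a well-known positivity theorem (due to Dougall, and reproved by many authors) valid for the Gegenbauer index $\tfrac{d-1}{2}\ge 0$, which holds here since $d\ge2$. I would cite this from \cite{DX} (or a standard reference on special functions) and identify the linearization coefficients with $C_k^{\ell,s}$ via \eqref{4.1}. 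If a self-contained argument is preferred, one can instead appeal to Schoenberg's characterization (Lemma \ref{lem2.3}): the positive-definiteness of the kernel $G_\ell(\x\cdot\y)G_s(\x\cdot\y)$ on $\ss$, being a product of two positive-definite kernels, is again positive-definite, and its Gegenbauer coefficients---which are precisely the $C_k^{\ell,s}$ up to the known factors---must therefore be nonnegative.

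With (2) in hand, properties (4) and (5) are routine evaluations. For (4), I would evaluate \eqref{4.1} at $t=1$ using the normalization $G_k(1)=1$ from \eqref{2.4}: this gives $G_\ell(1)G_s(1)=1=\sum_k C_k^{\ell,s}G_k(1)=\sum_k C_k^{\ell,s}$, and restricting the sum to the nonvanishing range $|\ell-s|\le k\le\ell+s$ (by (3)) yields $\sum_{k=|\ell-s|}^{\ell+s}C_k^{\ell,s}=1$. For (5), I would compute $\langle G_\ell^2 G_s^2\rangle=\langle (G_\ell G_s)^2\rangle$ by substituting the expansion \eqref{4.1} and using the orthogonality $\langle G_kG_{k'}\rangle=h_k^2\delta_{k,k'}$:
\begin{align*}
\langle G_\ell^2 G_s^2\rangle=\Big\langle\Big(\sum_k C_k^{\ell,s}G_k\Big)^2\Big\rangle=\sum_k (C_k^{\ell,s})^2 h_k^2=\sum_{k=|\ell-s|}^{\ell+s}\frac{\langle G_\ell G_s G_k\rangle^2}{h_k^2},
\end{align*}
where the last equality uses $C_k^{\ell,s}=\langle G_\ell G_s G_k\rangle/h_k^2$ and the vanishing range from (3). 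This is exactly the asserted Parseval-type identity.
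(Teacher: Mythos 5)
Your proposal is correct and follows essentially the same route as the paper: symmetry of the triple product for (1), the classical Gegenbauer linearization positivity theorem for (2) (the paper cites \cite{Ga,Hs} for exactly this), degree counting plus orthogonality \eqref{4.1-0} for (3), evaluation of \eqref{4.1} at $t=1$ with $G_k(1)=1$ for (4), and Parseval for (5). Your alternative self-contained argument for (2) --- that $G_\ell(\x\cdot\y)G_s(\x\cdot\y)$ is a product of positive semi-definite kernels, hence positive semi-definite by the Schur product theorem, so its Gegenbauer coefficients are nonnegative by the ``only if'' direction of Lemma~\ref{lem2.3} --- is a valid and pleasant bonus that the paper does not exploit, even though it keeps all tools inside the paper itself.
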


\begin{proof} Property (1) follows from the definition of
$C_k^{\ell,s}$. Property (2) follows from \cite{Ga,Hs}.  Property
(3) is due to \eqref{4.1-0} and the fact that $G_\ell
G_s\in\mathcal P_{\ell+s}$. Property (4) comes from \eqref{4.1},
Property (3), and $G_\ell(1)=1$. Property (5) follows from
\eqref{4.1}, Property (3),  and the Parseval equality.
\end{proof}

Let $\widetilde\mu$ be a nonnegative sequence in $\ell_1(\Bbb
N_0)$. We set
$$f_{\widetilde\mu}(t)=\sum_{j=0}^\infty  \widetilde\mu_j G_j(t),
\ \ \ {\rm and}\ \ \
K_{\widetilde\mu}(\x,\y)=f_{\widetilde\mu}(\x\cdot\y), \
\x,\y\in\ss.$$ Then ${\rm H}_{\widetilde\mu}\equiv {\rm
H}_{\widetilde\mu}(\ss)$ is a RKHS with the reproducing kernel
$K_{\widetilde\mu}$, and for all $\x\in\ss$,
$$K_{\widetilde\mu}(\x,\x)=\|\widetilde\mu\|_{\ell_1}, \ \ \ h_{\widetilde\mu}(\x)=\widetilde
\mu_0 $$where $h_{\widetilde\mu}\in {\rm H}_{\widetilde\mu}$ is
the representer of the integration problem INT for ${\rm
H}_{\widetilde\mu}$.

For two nonnegative sequences $\widetilde\mu, \widetilde\nu$ in
$\ell_{1}(\Bbb N_{0})$, we define the formal convolution
$\widetilde\mu * \widetilde\nu=\{(\widetilde\mu *
\widetilde\nu)_k\}_{k=0}^\infty$ by
$$ (\widetilde \mu \ast  \widetilde \nu)_{k}
=\sum_{\ell=0}^{\infty}\sum_{s=0}^{\infty} C_k^{\ell,s}\widetilde
\mu_{\ell}\widetilde \nu_{s}.$$ It follows from \eqref{4.1} that
$$f_{\widetilde \mu \ast  \widetilde \nu}(t)=f_{\widetilde
\mu}(t)f_{\widetilde \nu}(t),\ \ {\rm and}\  \ K_{\widetilde \mu
\ast \widetilde \nu}(\x,\y)=K_{\widetilde \mu}(\x,\y)K_{\widetilde
\nu}(\x,\y)$$ for all $t\in[-1,1]$ and $\x,\y\in\ss.$
Specifically, we have
$$ K_{\widetilde \mu
\ast \widetilde \mu}(\x,\y)=K_{\widetilde \mu}(\x,\y)^2.$$

\begin{thm}\label{thm4.2}Let $\widetilde\mu, \widetilde\nu$ be  two nonnegative  sequences
 in $\ell_{1}(\Bbb N_{0})$, and let $\widetilde{\Lambda}=\{\widetilde\lz_k\}_{k=0}^\infty$ satisfy $\widetilde{\Lambda}=\widetilde\mu*
 \widetilde\mu+\widetilde\nu*\widetilde\nu$.
  Then we have
$$e_{n}({\rm H}_{\widetilde\Lambda})^{2}\geq  \widetilde\lz_0 \left(1- \frac{n\widetilde\lz_0}{\sum_{\ell=0}^{\infty}\widetilde{\lz}_{\ell}}\right).$$
\end{thm}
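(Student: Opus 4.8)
The plan is to apply Lemma \ref{lem2.4} with a well-chosen value of the free parameter $\alpha>0$, and to verify the positive semi-definiteness hypothesis by combining Lemma \ref{lem2.5} (Vyb\'iral's Schur-product theorem) with Schoenberg's characterization in Lemma \ref{lem2.3}. First I would record the two facts established just above the statement for the space ${\rm H}_{\widetilde\Lambda}$: the reproducing kernel is $K_{\widetilde\Lambda}(\x,\y)=\sum_{\ell=0}^\infty\widetilde\lz_\ell G_\ell(\x\cdot\y)$, so that $K_{\widetilde\Lambda}(\x,\x)=\|\widetilde\Lambda\|_{\ell_1}=\sum_{\ell=0}^\infty\widetilde\lz_\ell$, and the representer of $\rm INT$ is the constant $h_{\widetilde\Lambda}(\x)=\widetilde\lz_0$ with $\|h_{\widetilde\Lambda}\|_{{\rm H}_{\widetilde\Lambda}}^2=\widetilde\lz_0$. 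Thus the target inequality $e_n^2\ge\widetilde\lz_0(1-n\widetilde\lz_0/\sum_\ell\widetilde\lz_\ell)$ has the shape $\|h_{\widetilde\Lambda}\|^2-\alpha^{-1}$ of Lemma \ref{lem2.4} exactly when $\alpha^{-1}=\widetilde\lz_0\cdot n\widetilde\lz_0/\sum_\ell\widetilde\lz_\ell$, i.e.\ $\alpha=\sum_{\ell=0}^\infty\widetilde\lz_\ell/(n\widetilde\lz_0^2)$. So the whole theorem reduces to showing that for this $\alpha$ and for \emph{arbitrary} nodes $\x_1,\dots,\x_n\in\ss$, the matrix $M:=\big(K_{\widetilde\Lambda}(\x_j,\x_k)-\alpha\,\widetilde\lz_0^2\big)_{j,k}$ is positive semi-definite.

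The key step is to exploit the convolution-square structure $\widetilde\Lambda=\widetilde\mu*\widetilde\mu+\widetilde\nu*\widetilde\nu$. By the multiplicativity recorded above, $K_{\widetilde\mu*\widetilde\mu}(\x,\y)=K_{\widetilde\mu}(\x,\y)^2$ and likewise for $\widetilde\nu$, so $K_{\widetilde\Lambda}=K_{\widetilde\mu}^2+K_{\widetilde\nu}^2$. Writing $P:=\big(K_{\widetilde\mu}(\x_j,\x_k)\big)_{j,k}$ and $Q:=\big(K_{\widetilde\nu}(\x_j,\x_k)\big)_{j,k}$, the Gram matrix of $K_{\widetilde\Lambda}$ at the nodes is $P\circ P+Q\circ Q$. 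Since $\widetilde\mu,\widetilde\nu$ are nonnegative sequences in $\ell_1(\Bbb N_0)$, Lemma \ref{lem2.3} (Schoenberg) guarantees that $P$ and $Q$ are each positive semi-definite. Now I would apply Lemma \ref{lem2.5} to each, obtaining
\begin{align*}
P\circ P+Q\circ Q\succeq \frac1n({\rm diag}\,P)({\rm diag}\,P)^T+\frac1n({\rm diag}\,Q)({\rm diag}\,Q)^T.
\end{align*}
Because $P_{jj}=K_{\widetilde\mu}(\x_j,\x_j)=\|\widetilde\mu\|_{\ell_1}$ is the \emph{same} constant for every $j$, and similarly $Q_{jj}=\|\widetilde\nu\|_{\ell_1}$, both diagonal vectors are multiples of the all-ones vector $\mathbf 1$; hence the right-hand side equals $\frac1n\big(\|\widetilde\mu\|_{\ell_1}^2+\|\widetilde\nu\|_{\ell_1}^2\big)\mathbf 1\mathbf 1^T$.

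Finally I would convert this into the required bound. The point is that $\mathbf 1\mathbf 1^T=\big(h_{\widetilde\Lambda}(\x_j)h_{\widetilde\Lambda}(\x_k)/\widetilde\lz_0^2\big)_{j,k}$, since $h_{\widetilde\Lambda}\equiv\widetilde\lz_0$. Therefore $M=(P\circ P+Q\circ Q)-\alpha\widetilde\lz_0^2\,\mathbf 1\mathbf 1^T\succeq \big(\tfrac1n(\|\widetilde\mu\|_{\ell_1}^2+\|\widetilde\nu\|_{\ell_1}^2)-\alpha\widetilde\lz_0^2\big)\mathbf 1\mathbf 1^T$, which is positive semi-definite as soon as the scalar coefficient is nonnegative, i.e.\ as soon as $\alpha\le (\|\widetilde\mu\|_{\ell_1}^2+\|\widetilde\nu\|_{\ell_1}^2)/(n\widetilde\lz_0^2)$. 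I expect the main technical point to be identifying $\|\widetilde\mu\|_{\ell_1}^2+\|\widetilde\nu\|_{\ell_1}^2$ with $\sum_{\ell=0}^\infty\widetilde\lz_\ell$: by Property (4) of Proposition \ref{prop2}, summing $\widetilde\lz_k=(\widetilde\mu*\widetilde\mu)_k+(\widetilde\nu*\widetilde\nu)_k$ over $k$ and using $\sum_k C_k^{\ell,s}=1$ gives $\sum_k(\widetilde\mu*\widetilde\mu)_k=\big(\sum_\ell\widetilde\mu_\ell\big)^2=\|\widetilde\mu\|_{\ell_1}^2$ and similarly for $\widetilde\nu$, so $\sum_{\ell}\widetilde\lz_\ell=\|\widetilde\mu\|_{\ell_1}^2+\|\widetilde\nu\|_{\ell_1}^2$. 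With this identity the admissible range becomes $\alpha\le\sum_\ell\widetilde\lz_\ell/(n\widetilde\lz_0^2)$, so the chosen $\alpha=\sum_\ell\widetilde\lz_\ell/(n\widetilde\lz_0^2)$ is exactly the boundary case; Lemma \ref{lem2.4} then yields $e_n^2\ge\widetilde\lz_0-\alpha^{-1}=\widetilde\lz_0\big(1-n\widetilde\lz_0/\sum_\ell\widetilde\lz_\ell\big)$, as claimed.
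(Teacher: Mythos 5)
Your proposal is correct and follows essentially the same route as the paper's proof: Schoenberg's characterization (Lemma \ref{lem2.3}) gives positive semi-definiteness of the kernel matrices for $\widetilde\mu$ and $\widetilde\nu$, Vyb\'iral's theorem (Lemma \ref{lem2.5}) bounds their Schur squares from below by rank-one matrices with constant diagonal, and Lemma \ref{lem2.4} with $\alpha=\sum_\ell\widetilde\lz_\ell/(n\widetilde\lz_0^2)$ converts this into the stated lower bound, using the same identity $\|\widetilde\Lambda\|_{\ell_1}=\|\widetilde\mu\|_{\ell_1}^2+\|\widetilde\nu\|_{\ell_1}^2$ (which you justify via Proposition \ref{prop2}(4), while the paper reads it off from $K_{\widetilde\Lambda}(\x,\x)=K_{\widetilde\mu}(\x,\x)^2+K_{\widetilde\nu}(\x,\x)^2$). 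The only difference is cosmetic: the paper verifies positive semi-definiteness of the shifted matrix first and then invokes Lemma \ref{lem2.4}, whereas you fix the boundary value of $\alpha$ at the outset; the paper also records the trivial $n=0$ case separately.
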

\begin{proof}
For $n=0$, we have the initial error satisfies
 $$e_{0}({\rm H}_{\widetilde\Lambda})^{2}=\|h_{\widetilde\Lambda}\|_{{\rm H}_{\widetilde\Lambda}}^{2}={\widetilde\lz_0},
 $$where $h_{\widetilde\Lambda}(\x)=\widetilde\lz_0$ is the representer of the integration
problem INT for ${\rm H}_{\widetilde\Lz}$.

For $n=1,2,\dots$, it follows from Lemma 2.3 that for all
$\x_{1},...,\x_{n}\in \ss$, we know the matrixes
$(K_{\widetilde\mu}(\x_{j},\x_{k}))^{n}_{j,k=1}$ and
$(K_{\widetilde\nu}(\x_{j},\x_{k}))^{n}_{j,k=1}$  are positive
semi-definite, and by Lemma 2.5, the matrix with the entries
$$K_{\widetilde\mu}(\x_{j},\x_{k})^2- \frac{K_{\widetilde\mu}(\x_{j},\x_{j})K_{\widetilde\mu}(\x_{k},\x_{k})}{n}
=K_{\widetilde\mu
*\widetilde\mu}(\x_{j},\x_{k})-\frac{\|\widetilde\mu\|_{\ell_{1}}^2}{n}$$is
positive semi-definite. Similarly, the matrix with the entries
$$K_{\widetilde\nu
*\widetilde\nu}(\x_{j},\x_{k})-\frac{\|\widetilde\nu\|_{\ell_{1}}^2}{n}$$
is  positive semi-definite. It follows that the matrix with the
entries $$K_{\widetilde\mu
*\widetilde\mu}(\x_{j},\x_{k})+K_{\widetilde\nu
*\widetilde\nu}(\x_{j},\x_{k})-\frac{\|\widetilde\mu\|_{\ell_{1}}^2}{n}-\frac{\|\widetilde\nu\|_{\ell_{1}}^2}{n}$$
is also positive semi-definite.  We note that
$$K_{\widetilde\Lz}(\x,\y)= K_{\widetilde\mu
*\widetilde\mu}(\x,\y)+ K_{\widetilde\nu *\widetilde\nu}(\x,\y)\ \
{\rm and}\ \
\|\widetilde\Lz\|_{\ell_1}=\|\widetilde\mu\|_{\ell_1}^2+\|\widetilde\nu\|_{\ell_1}^2.$$Hence,
the matrix with the entries
$$K_{\widetilde\Lz
}(\x_{j},\x_{k})-\frac{\|\widetilde\Lz\|_{\ell_{1}}}{n}=K_{\widetilde\Lz
}(\x_{j},\x_{k})-\frac{\|\widetilde\Lz\|_{\ell_{1}}}{n\widetilde
\lz_0^2}h_{\widetilde\Lz}(\x_j)h_{\widetilde\Lz}(\x_k)$$ is also
positive semi-definite.
 Thus applying  Lemma 2.4 and the fact that   $\|h_{\widetilde\Lz}\|_{{\rm H}_{\widetilde\Lz}}^{2}=\widetilde\lz_0$, we have
$$
 e_{n}({\rm H}_{\widetilde\Lambda})^{2} \geq \|h_{\widetilde\Lz}\|_{{\rm H}_{\widetilde\Lambda}}^{2} -\frac{n\widetilde\lz^2_{0}}{\|\widetilde\Lz\|_{\ell_{1}}}
 =\widetilde \lz_0 \left(1- \frac{n\widetilde\lz_0}{\sum_{\ell=0}^{\infty}\widetilde{\lz}_{\ell}}\right).
$$
This completes the proof of Theorem \ref{thm4.2}.
\end{proof}

\begin{thm}\label{thm4.8}
 Let $\widetilde{\Lambda}=\{\widetilde{\lambda}_\ell\}_{\ell=0}^\infty$
 be a nonnegative non-increasing sequence in $\ell_1(\Bbb N_0)$.
Then
$$e_{n}({\rm H}_{\widetilde\Lambda})^{2}\geq  \widetilde\lz_0 \left(1- \frac{2cn\widetilde\lz_0}{\sum_{\ell=1}^{\infty}
\widetilde{\lambda}_{2\ell}+2c\widetilde\lz_0}\right),$$ where
$c>1$ is an absolute constant such that
$h_{\lfloor\ell/2\rfloor}^2\le ch_\ell^2$  for all $\ell\in\Bbb
N_0$, $\lfloor a\rfloor$ denotes the largest integer not exceeding
$a$, and $h_\ell^2$ is given in \eqref{4.1-1}.
\end{thm}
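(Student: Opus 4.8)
The plan is to reduce Theorem \ref{thm4.8} to Theorem \ref{thm4.2} by exhibiting the nonnegative non-increasing sequence $\widetilde\Lambda$ as a sum of two convolution squares $\widetilde\mu*\widetilde\mu+\widetilde\nu*\widetilde\nu$, and then to lower-bound the resulting $\sum_\ell \widetilde\lambda_\ell$ appearing in the denominator of Theorem \ref{thm4.2} by the quantity $\sum_{\ell\ge1}\widetilde\lambda_{2\ell}+2c\widetilde\lambda_0$ that occurs in the present statement. The natural candidates are the ``square-root'' sequences: I would try to define $\widetilde\mu$ and $\widetilde\nu$ so that $f_{\widetilde\mu}(t)^2+f_{\widetilde\nu}(t)^2=f_{\widetilde\Lambda}(t)$ as functions on $[-1,1]$, using the multiplicativity $K_{\widetilde\mu*\widetilde\mu}=K_{\widetilde\mu}^2$ established just before Theorem \ref{thm4.2}. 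Since an arbitrary nonnegative non-increasing $\widetilde\Lambda$ need not itself be a single convolution square, the device of splitting into two squares is what buys the necessary flexibility.

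Concretely, I expect the construction to proceed by grouping the Gegenbauer coefficients in consecutive pairs. The idea is to build $\widetilde\mu,\widetilde\nu$ supported so that $(\widetilde\mu*\widetilde\mu)_k$ and $(\widetilde\nu*\widetilde\nu)_k$ capture the diagonal contributions $C_k^{\ell,\ell}$ of single modes. Using Proposition \ref{prop2}, in particular Property (4) that $\sum_{k=|\ell-s|}^{\ell+s}C_k^{\ell,s}=1$ with all $C_k^{\ell,s}\ge0$, the total mass satisfies $\|\widetilde\mu*\widetilde\mu\|_{\ell_1}=\|\widetilde\mu\|_{\ell_1}^2$, so the $\ell_1$ norms are exactly preserved; this is precisely the identity used in the proof of Theorem \ref{thm4.2}. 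The role of the constant $c$ and of the even-index sum $\sum_{\ell\ge1}\widetilde\lambda_{2\ell}$ signals that the coefficients $h_\ell^2$ from \eqref{4.1-1} must be tracked carefully: the inequality $h_{\lfloor\ell/2\rfloor}^2\le c h_\ell^2$ should be what lets me compare the mass of the convolution-square sequence against the even-indexed partial sum of $\widetilde\Lambda$ itself.

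The main obstacle, and the heart of the argument, will be producing the explicit square-root decomposition and verifying the resulting denominator bound
$$\sum_{\ell=0}^\infty \widetilde\lambda_\ell \;\ge\; \frac{1}{2c}\Big(\sum_{\ell=1}^\infty \widetilde\lambda_{2\ell}\Big)+\widetilde\lambda_0,$$
so that substituting into Theorem \ref{thm4.2} yields exactly
$$e_n({\rm H}_{\widetilde\Lambda})^2\ge \widetilde\lambda_0\Big(1-\frac{n\widetilde\lambda_0}{\sum_\ell\widetilde\lambda_\ell}\Big)\ge \widetilde\lambda_0\Big(1-\frac{2cn\widetilde\lambda_0}{\sum_{\ell=1}^\infty\widetilde\lambda_{2\ell}+2c\widetilde\lambda_0}\Big).$$
The delicate point is that $\widetilde\Lambda$ equals $\widetilde\mu*\widetilde\mu+\widetilde\nu*\widetilde\nu$ only as an identity of the \emph{generating functions} $f_{\widetilde\Lambda}=f_{\widetilde\mu}^2+f_{\widetilde\nu}^2$, not coefficientwise for each fixed $k$; reconstructing the Gegenbauer coefficients of a product of two truncated series requires the linearization coefficients $C_k^{\ell,s}$ and their support constraints from Proposition \ref{prop2}(2),(3).

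I would therefore organize the proof as follows. First, fix the pairing: for each $\ell\ge0$ I would isolate a ``mode'' and choose positive square-root weights whose squared generating function reproduces $\widetilde\lambda_\ell G_\ell$ up to the lower-order spillover controlled by the $C_k^{\ell,s}$ support; the even/odd split into $\widetilde\mu$ (even indices) and $\widetilde\nu$ (odd indices, or the complementary grouping) guarantees that cross terms of differing parity do not interfere destructively and that both sequences remain nonnegative. Second, I would compute $\|\widetilde\Lambda\|_{\ell_1}=\|\widetilde\mu\|_{\ell_1}^2+\|\widetilde\nu\|_{\ell_1}^2$ using mass conservation, and estimate this sum from below by $\sum_{\ell\ge1}\widetilde\lambda_{2\ell}/(2c)+\widetilde\lambda_0$ via the monotonicity of $\widetilde\Lambda$ together with the comparison $h_{\lfloor\ell/2\rfloor}^2\le c h_\ell^2$. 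Third, I would apply Theorem \ref{thm4.2} verbatim to this decomposition to extract the claimed lower bound on $e_n({\rm H}_{\widetilde\Lambda})^2$. The only genuinely subtle estimate is the second step, where the factor $2c$ and the restriction to even indices enter; everything else is bookkeeping with the nonnegative linearization coefficients already catalogued in Proposition \ref{prop2}.
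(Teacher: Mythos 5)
There is a genuine gap, and it sits exactly where you located ``the heart of the argument'': the exact decomposition $\widetilde\Lambda=\widetilde\mu*\widetilde\mu+\widetilde\nu*\widetilde\nu$ with \emph{nonnegative} sequences $\widetilde\mu,\widetilde\nu\in\ell_1(\Bbb N_0)$ (which is what Theorem \ref{thm4.2} requires) simply does not exist for a general nonnegative non-increasing $\widetilde\Lambda$. Note first that there is no wiggle room between ``identity of generating functions'' and ``coefficientwise identity'': since the $G_k$ form a basis and the expansions are unique, $f_{\widetilde\Lambda}=f_{\widetilde\mu}^2+f_{\widetilde\nu}^2$ on $[-1,1]$ is \emph{equivalent} to $(\widetilde\mu*\widetilde\mu)_k+(\widetilde\nu*\widetilde\nu)_k=\widetilde\lambda_k$ for every $k$. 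Now take $\widetilde\Lambda=(1,1,0,0,\dots)$, i.e.\ $f_{\widetilde\Lambda}(t)=1+G_1(t)$. All linearization coefficients are nonnegative, so the coefficientwise identity forces $(\widetilde\mu*\widetilde\mu)_k=(\widetilde\nu*\widetilde\nu)_k=0$ for every $k\ge2$. But if $\widetilde\mu_\ell>0$ for some $\ell\ge1$, then $(\widetilde\mu*\widetilde\mu)_{2\ell}\ge C_{2\ell}^{\ell,\ell}\,\widetilde\mu_\ell^2>0$, because the top linearization coefficient $C_{\ell+s}^{\ell,s}=\mathrm{lc}(G_\ell)\mathrm{lc}(G_s)/\mathrm{lc}(G_{\ell+s})$ is strictly positive. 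Hence $\widetilde\mu$ and $\widetilde\nu$ are supported on $\{0\}$, making $f_{\widetilde\mu}^2+f_{\widetilde\nu}^2$ constant --- a contradiction. This also shows that your even/odd pairing idea cannot help: since all $C_k^{\ell,s}\ge0$, there is never ``destructive interference'' to avoid; the problem is that convolution squares are too rigid to hit $\widetilde\Lambda$ exactly, no matter how the modes are grouped.

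The paper's proof avoids this obstruction by never decomposing $\widetilde\Lambda$ at all: it constructs a coefficientwise \emph{minorant} that is a sum of two convolution squares and uses the monotonicity $e_n({\rm H}_{\widetilde\Lambda})\ge e_n({\rm H}_{\widetilde\nu})$ valid whenever $\widetilde\nu_\ell\le\widetilde\lambda_\ell$ for all $\ell$. Concretely, with $A=\sum_{\ell\ge0}\widetilde\lambda_{2\ell}$ one sets $\widetilde\mu_\ell=(2cA)^{-1/2}\widetilde\lambda_{2\ell}$ and proves, using the symmetry $C_k^{\ell,s}h_k^2=\langle G_kG_\ell G_s\rangle$, the support property and Property (4) of Proposition \ref{prop2}, the monotonicity of $\widetilde\Lambda$, and $h_{\lfloor k/2\rfloor}^2\le ch_k^2$, that $(\widetilde\mu*\widetilde\mu)_k\le\widetilde\lambda_k$ for $k\ge1$ and $(\widetilde\mu*\widetilde\mu)_0\le(2c)^{-1}\widetilde\lambda_0$; then $\widetilde\nu:=\widetilde\mu*\widetilde\mu+t\,\delta_0*\delta_0$, with $t$ chosen so that $\widetilde\nu_0=\widetilde\lambda_0$, is a sum of two convolution squares lying below $\widetilde\Lambda$, and Theorem \ref{thm4.2} applied to $\widetilde\nu$ gives the claim via $\|\widetilde\nu\|_{\ell_1}\ge(2c)^{-1}\sum_{\ell\ge1}\widetilde\lambda_{2\ell}+\widetilde\lambda_0$. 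This also exposes a second misreading in your plan: the inequality you flag as ``the only genuinely subtle estimate,'' namely $\sum_{\ell\ge0}\widetilde\lambda_\ell\ge(2c)^{-1}\sum_{\ell\ge1}\widetilde\lambda_{2\ell}+\widetilde\lambda_0$, is trivially true (the even-indexed terms form a sub-series and $(2c)^{-1}<1$) and plays no role; the constant $c$ does not come from comparing sums of $\widetilde\Lambda$, but is the price paid in the normalization $(2cA)^{-1/2}$ needed to force the convolution square underneath $\widetilde\Lambda$. Indeed, if your exact decomposition were available, Theorem \ref{thm4.2} would immediately yield the strictly stronger bound with denominator $\sum_\ell\widetilde\lambda_\ell$ --- a further sign that the approach claims too much.
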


\begin{proof} We set
$$
\widetilde{\mu}=\{\widetilde{\mu}_\ell\}_{\ell=0}^\infty,\ \ \
\widetilde{\mu}_\ell=(2cA )^{-1/2}\widetilde{\lz}_{2\ell},\
A=\sum_{\ell=0}^{\infty}{\widetilde\lz}_{2\ell},$$ and
$$\dz_0=\{\delta_{0\ell}\}_{\ell=0}^\infty,\ \ \ \dz_{00}=1,\ {\rm and}\ \dz_{0\ell}=0\ {\rm for}\ \ell\ge1,$$where $c>1$ is
an absolute constant such that $h_{\lfloor\ell/2\rfloor}^2\le ch_\ell^2$. Such
constant $c$ exists since $h_\ell^2\asymp (\ell+1)^{-d+1}$. We
note that $$\dz_0*\dz_0=\dz_0.$$

We have for $k\ge1$,
\begin{align*}
(\widetilde\mu \ast  \widetilde\mu)_{k}&=
\sum_{\ell=0}^{\infty}\sum_{s=0}^{\infty}
C_k^{\ell,s}\widetilde\mu_{\ell}\widetilde\mu_{s}\notag =(2cA
)^{-1}\sum_{\ell=0}^{\infty}\sum_{s=0}^{\infty} C_k^{\ell,s}
{\widetilde\lz}_{2\ell}{\widetilde\lz}_{2s}\notag
\\
&\leq\frac{(2cA
)^{-1}}{h_k^2}\left\{\sum_{\ell=0}^{\infty}\sum_{s\ge k/2}
C_s^{\ell,k}{\widetilde\lz}_{2\ell}\widetilde\lz_{2s}h_s^2+\sum_{s=0}^{\infty}\sum_{\ell\ge
k/2}
C_\ell^{s,k}{\widetilde\lz}_{2s}\widetilde\lz_{2\ell}h_\ell^2\right\}\notag
\\&\le\frac{(2cA )^{-1}}{h_k^2}\left\{\sum_{\ell=0}^{\infty}\left(\sum_{s\ge k/2}
C_s^{\ell,k}\right){\widetilde\lz}_{2\ell}+\sum_{s=0}^{\infty}\left(\sum_{\ell\ge
k/2}
C_\ell^{s,k}\right){\widetilde\lz}_{2s}\right\}{\widetilde\lz}_{k}h_{\lfloor
k/2\rfloor}^2\notag
\\&\le (2cA )^{-1}2\frac{h_{\lfloor k/2\rfloor}^2}{h_k^2}\left(\sum_{\ell=0}^{\infty}{\widetilde\lz}_{2\ell}\right)\widetilde\lz_{k}\notag\le\widetilde\lambda_{k},
\end{align*}
where in the first inequality we used the fact that $
C_k^{\ell,s}\neq 0$ implies $s\ge k/2$ or ${\ell\ge k/2} $ and
that $C_k^{\ell,s}h_k^2=\langle G_k G_\ell G_s\rangle$,    in the
second inequality we  used    $\widetilde{\Lambda}$ is a  nonnegative
non-increasing sequence, in the last second inequality we used
Proposition \ref{prop2} (4). And for $k=0$ we have
\begin{align*}
  (\widetilde\mu \ast  \widetilde\mu)_{0}&=\sum_{\ell=0}^{\infty} \frac{h_\ell^2}{h^{2}_0}\tilde \mu^{2}_{\ell}
  =(2cA )^{-1}\sum_{\ell=0}^{\infty} \frac{h_\ell^2}{h^{2}_0} \widetilde{\lz}_{2\ell}^2\\ &\le (2cA )^{-1}
  \left(\sum_{\ell=0}^{\infty}{\widetilde\lz}_{2\ell}\right)\widetilde\lz_{0}=
  (2c)^{-1}\widetilde\lz_{0}.
\end{align*}

 Now we put $\widetilde \nu =\widetilde \mu \ast  \widetilde \mu+ t\,\delta_{0}*\dz_0$
 and choose $t\geq (1-(2c)^{-1})\widetilde\lz_0$ such that  $\widetilde
 \nu_{0}=\widetilde\lz_{0}$.
 Then $\widetilde \nu$ is a sum of two convolution squares, and $\widetilde\lz_\ell\ge \widetilde\mu_\ell$ for all $\ell\in \Bbb N_0$. It follows from
 Theorem 4.2
  that
\begin{equation}\label{4.1-3}e_{n}({\rm H}_{\widetilde\Lz})^{2}\geq e_{n}({\rm
H}_{\widetilde \nu})^{2} \geq \widetilde \nu_0 \left(1-
\frac{n\widetilde\nu_0}{\|\widetilde
\nu\|_{\ell_{1}}}\right)=\widetilde \lz_0 \left(1-
\frac{n\widetilde\lz_0}{\|\widetilde
\nu\|_{\ell_{1}}}\right),\end{equation} where
\begin{align*}
\|\widetilde \nu\|_{\ell_{1}} &= \|\widetilde \mu\|_{\ell_{1}}^2+
t\|\delta_{0}\|_{\ell_1}^2 \notag\geq (2cA
)^{-1}\(\sum_{\ell=0}^{\infty}{\widetilde\lz}_{2\ell}\)^2+(1-(2c)^{-1})\widetilde\lz_0\\&
=(2c)^{-1}\sum_{\ell=0}^{\infty}{\widetilde\lz}_{2\ell}+(1-(2c)^{-1})\widetilde\lz_0=(2c)^{-1}\sum_{\ell=1}^{\infty}{\widetilde\lz}_{2\ell}+\widetilde\lz_0.
\end{align*}By \eqref{4.1-3} we obtain
$$e_{n}({\rm H}_{\widetilde\Lz})^{2}\geq \widetilde \lz_0 \left(1-
\frac{2cn\widetilde\lz_0}{\sum_{\ell=1}^{\infty}{\widetilde\lz}_{2\ell}+2c\widetilde\lz_0}\right).
$$ This completes the proof of Theorem \ref{thm4.8}.
\end{proof}

\begin{thm}\label{thm4.9}
Let
$\widetilde{\Lambda}=\{\widetilde{\lambda}_\ell\}_{\ell=0}^\infty$
be a nonnegative non-increasing sequence in $\ell_1(\Bbb N_0)$.
Then
$$e_{n}({\rm H}_{\widetilde\Lambda})^2\geq \min
\left\{\frac12\widetilde\lz_0, \frac{1}{ 4cn}\sum_{\ell>
2cn}\widetilde \lambda_{2\ell}\right\},$$ where $c>1$ is an
absolute constant such that
 $h_{\lfloor\ell/2\rfloor}^2\le ch_\ell^2$  for all $\ell\in\Bbb N_0$.
\end{thm}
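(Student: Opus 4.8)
The plan is to deduce Theorem \ref{thm4.9} from Theorem \ref{thm4.8} by a simple case analysis. Theorem \ref{thm4.8} already furnishes the lower bound
\begin{equation}\label{recall48}
e_{n}({\rm H}_{\widetilde\Lambda})^{2}\geq \widetilde\lz_0 \left(1- \frac{2cn\widetilde\lz_0}{\sum_{\ell=1}^{\infty}\widetilde{\lambda}_{2\ell}+2c\widetilde\lz_0}\right),
\end{equation}
so the entire task is to extract from the right-hand side of \eqref{recall48} the clean minimum $\min\{\tfrac12\widetilde\lz_0,\ \tfrac1{4cn}\sum_{\ell>2cn}\widetilde\lambda_{2\ell}\}$.

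\medskip
\noindent\textbf{Main idea.} The factor in parentheses in \eqref{recall48} is $1-\frac{2cn\widetilde\lz_0}{S+2c\widetilde\lz_0}$, where I abbreviate $S:=\sum_{\ell=1}^{\infty}\widetilde{\lambda}_{2\ell}$. First I would rewrite this factor as $\frac{S+2c\widetilde\lz_0-2cn\widetilde\lz_0}{S+2c\widetilde\lz_0}=\frac{S-2c(n-1)\widetilde\lz_0}{S+2c\widetilde\lz_0}$, and the quantity I really want to bound below is the tail $\sum_{\ell>2cn}\widetilde\lambda_{2\ell}$. The natural dichotomy is to compare $\widetilde\lz_0$ with the tail sum. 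Split $S=\sum_{1\le\ell\le 2cn}\widetilde\lambda_{2\ell}+\sum_{\ell>2cn}\widetilde\lambda_{2\ell}$; since $\widetilde\Lambda$ is non-increasing, the head satisfies $\sum_{1\le\ell\le 2cn}\widetilde\lambda_{2\ell}\le 2cn\,\widetilde\lz_0$ (each of the at most $2cn$ terms is $\le\widetilde\lambda_0$). This is the key monotonicity input.

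\medskip
\noindent\textbf{Case analysis.} I would then distinguish two regimes according to how the tail $T:=\sum_{\ell>2cn}\widetilde\lambda_{2\ell}$ compares to $2cn\,\widetilde\lz_0$.
\begin{enumerate}
\item If $T\le 4cn\,\widetilde\lz_0$, i.e.\ the target bound $\frac{T}{4cn}\le\widetilde\lz_0$, then it suffices to show $e_n^2\ge\frac{T}{4cn}$, which should follow by choosing the parameters so that the parenthetical factor in \eqref{recall48} is at least $\frac{T}{4cn\widetilde\lz_0}$; here one uses $S\ge T$ and the head estimate to control the denominator.
\item In the complementary regime the dominant term is $\tfrac12\widetilde\lz_0$, and one shows the factor in \eqref{recall48} is at least $\tfrac12$, i.e.\ $2cn\widetilde\lz_0\le\tfrac12(S+2c\widetilde\lz_0)$, using $S\ge T$ together with the size of $T$.
\end{enumerate}
Combining the two cases yields $e_n^2\ge\min\{\tfrac12\widetilde\lz_0,\ \tfrac1{4cn}T\}$, which is exactly the claim.

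\medskip
\noindent\textbf{Main obstacle.} The content of the proof is entirely bookkeeping: no new structural input beyond \eqref{recall48} and the monotonicity of $\widetilde\Lambda$ is needed. The only delicate point I expect is \emph{calibrating the constants} so that the two cases glue together to produce precisely the stated constants $\tfrac12$ and $\tfrac1{4c}$ (rather than some other admissible constants). Concretely, one must verify that the threshold separating the cases, the head bound $\sum_{1\le\ell\le 2cn}\widetilde\lambda_{2\ell}\le 2cn\widetilde\lz_0$, and the algebraic manipulation of $\frac{S-2c(n-1)\widetilde\lz_0}{S+2c\widetilde\lz_0}$ all line up to give the factor $\tfrac1{4cn}$ in the tail case and $\tfrac12$ in the other. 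This is routine but requires care with the inequalities, and is where I would spend most of the effort.
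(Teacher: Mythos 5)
Your plan cannot be completed, and the obstacle is not constant calibration: Theorem \ref{thm4.8} applied directly to $\widetilde\Lambda$ is genuinely weaker than Theorem \ref{thm4.9}, and your Case 1 fails. Concretely, take $\widetilde\lambda_0=1$, $\widetilde\lambda_\ell=\varepsilon$ for $1\le\ell\le 8cn$, $\widetilde\lambda_\ell=0$ for $\ell>8cn$, with $n\ge 2$ and $\varepsilon<1/4$. Then $S=\sum_{\ell\ge1}\widetilde\lambda_{2\ell}\le 4cn\varepsilon$, so the bound of Theorem \ref{thm4.8} is
$$\widetilde\lambda_0\left(1-\frac{2cn\widetilde\lambda_0}{S+2c\widetilde\lambda_0}\right)\le 1-\frac{2cn}{4cn\varepsilon+2c}=1-\frac{n}{2n\varepsilon+1}<0,$$
i.e.\ vacuous; yet the tail is $T=\sum_{\ell>2cn}\widetilde\lambda_{2\ell}\ge(2cn-1)\varepsilon>0$, so the target $\min\{\tfrac12\widetilde\lambda_0,\tfrac{1}{4cn}T\}$ is strictly positive. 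This example sits in your Case 1 ($T\le 4cn\widetilde\lambda_0$), where you want the parenthetical factor to be at least $T/(4cn\widetilde\lambda_0)$ --- but it is negative. The point is structural: the bound of Theorem \ref{thm4.8} degenerates whenever $\widetilde\lambda_0$ dominates $\sum_{\ell\ge1}\widetilde\lambda_{2\ell}$, whereas Theorem \ref{thm4.9} is designed to remain meaningful precisely in that regime; no bookkeeping with the head estimate $\sum_{1\le\ell\le 2cn}\widetilde\lambda_{2\ell}\le 2cn\widetilde\lambda_0$ can bridge this. (Your Case 2 is fine: if $T>4cn\widetilde\lambda_0$, then $S\ge T$ forces the factor to be at least $\tfrac12$.)

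The missing idea, which is the actual content of the paper's proof, is to apply Theorem \ref{thm4.8} not to $\widetilde\Lambda$ but to an auxiliary \emph{minorizing} sequence. The paper sets $\tau_\ell=\widetilde\lambda_\ell$ for $\ell>4cn$, flattens the head by $\tau_\ell=\widetilde\lambda_{\lfloor 4cn\rfloor}$ for $0<\ell\le 4cn$, and caps the zeroth entry,
$$\tau_0=\min\Big\{\widetilde\lambda_0,\ \max\Big\{\tfrac12\widetilde\lambda_{\lfloor 4cn\rfloor},\ \tfrac{1}{2cn}\textstyle\sum_{\ell>2cn}\widetilde\lambda_{2\ell}\Big\}\Big\}.$$
Since $\tau_\ell\le\widetilde\lambda_\ell$ for all $\ell$, the unit ball of ${\rm H}_{\tau}$ is contained in that of ${\rm H}_{\widetilde\Lambda}$, hence $e_n({\rm H}_{\widetilde\Lambda})\ge e_n({\rm H}_{\tau})$; this monotonicity of $e_n$ in the sequence is the step absent from your outline. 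One then verifies that, by construction, $\sum_{\ell\ge1}\tau_{2\ell}+2c\tau_0\ge 4cn\tau_0$, so that Theorem \ref{thm4.8} applied to $\tau$ gives $e_n({\rm H}_{\tau})^2\ge\tfrac12\tau_0$, and finally $\tfrac12\tau_0\ge\min\{\tfrac12\widetilde\lambda_0,\tfrac{1}{4cn}\sum_{\ell>2cn}\widetilde\lambda_{2\ell}\}$ by dropping the $\max$ term $\tfrac12\widetilde\lambda_{\lfloor 4cn\rfloor}$. In short: because decreasing the sequence can only decrease the quadrature error, one first shrinks $\widetilde\lambda_0$ and flattens the head until Theorem \ref{thm4.8} becomes effective; that reduction --- not a two-case estimate of the bound for $\widetilde\Lambda$ itself --- is what produces the stated minimum.
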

\begin{proof}
Let  $\tau=\{\tau_\ell \}_{\ell=0}^\infty$ defined by
$$\tau_{0}=\min \left\{\widetilde\lz_0, \,\max\left\{ \frac{1}{2}\widetilde
\lambda_{\lfloor 4cn\rfloor}, \frac{1}{2cn}\sum_{\ell>
2cn}\widetilde \lambda_{2\ell}\right\}\right\}$$ and
$$\tau_{\ell} :=\left\{\begin{aligned}& \widetilde\lambda_{\ell},
   &&{\rm for}\ \ \ell>  4cn,\\&
 \widetilde \lambda_{\lfloor 4cn\rfloor},     &&{\rm for}\
\ 0<\ell\leq  4cn.
\end{aligned}\right.
$$
Then $\tau$ is a nonnegative non-increasing sequence in
$\ell_1(\Bbb N_0)$. Moreover, we have $\widetilde\lz_\ell
>\tau_\ell$ for all $\ell\in\Bbb N_0$, and
\begin{align*}
\sum^{\infty}_{\ell=1}\tau_{2\ell}+2c\tau_{0}&\geq
\sum_{2\ell> 4cn}\widetilde\lambda_{2\ell}+\sum_{0<2\ell\leq 4cn}\widetilde\lambda_{\lfloor 4cn\rfloor}+2c\tau_{0}\notag \\
&\geq \sum_{\ell> 2cn}\widetilde\lambda_{2\ell}+
cn\widetilde\lambda_{\lfloor 4cn\rfloor}\geq4cn\tau_{0}.
\end{align*}
Applying Theorem \ref{thm4.8}, we have

\begin{align*}
e_{n}({\rm H}_{\widetilde\Lz})^{2}\geq e_{n}({\rm H}_{\tau})^{2}
&\geq \tau_0 \left(1-
\frac{2cn\tau_0}{\sum^{\infty}_{\ell=1}\tau_{2\ell}
+2c\tau_{0}}\right)\notag \geq \frac{1}{2}\tau_0\notag
\\&\ge\min \left\{\frac12\widetilde\lz_0, \frac{1}{ 4cn}\sum_{\ell>2cn}\widetilde \lambda_{2\ell}\right\}.
\end{align*}\
This completes the proof of Theorem \ref{thm4.9}.
\end{proof}

\noindent{\it Proof of Theorem \ref{thm1.3}}\

We consider the space ${\rm H}^{\az,\beta}(\ss)={\rm
H}^{\Lz^{(\az,\beta)}}(\ss)={\rm H}_{\widetilde\Lz^{(\az,\beta)}}$
with $\az=d/2,\,\beta>1/2$, where
$$\Lz^{(\az,\beta)}=\{\lz_\ell^{(\az,\beta)}\}_{\ell=0}^\infty,\ \
\
\widetilde\Lz^{(\az,\beta)}=\{\widetilde\lz_\ell^{(\az,\beta)}\}_{\ell=0}^\infty,$$
$$\lz_\ell^{(\az,\beta)}=(1+\ell(\ell+d-1))^{-d/4}(\ln(3+\ell(\ell+d-1)))^{-\beta},
$$and
$$\widetilde\lz_\ell^{(\az,\beta)}=(\lz_\ell^{(\az,\beta)})^2N(d,\ell)\asymp
\ell^{-1}\ln^{-2\beta}\ell.
$$
It follows from Theorem \ref{thm4.9}  that
 $$e_n({\rm H}^{\az,\beta}(\ss))^{2}
\gtrsim \frac{1}{4cn}\sum_{\ell>2cn}\widetilde
\lambda^{(\az,\beta)}_{2\ell} \asymp n^{-1}\sum_{\ell>
2cn}\ell^{-1}\ln^{-2\beta}\ell\asymp n^{-1}\ln^{-2\beta+1}\ell.$$
This completes the proof of Theorem \ref{thm1.3}. $\hfill\Box$

\
\section{Proof of Theorems \ref{thm1.5}}\

In order to prove Theorem \ref{thm1.5}, we shall use the following lemma.

\begin{lem}\label{lem5.1} (See \cite[Theorem 2.1]{D2}.)
Let  $\Ga$ be   a finite subset of  $\,\sph$, and  let  $\{
\mu_{\boldsymbol \omega}:\ \ \boldsymbol\omega\in \Ga\}$ be   a
set of positive numbers satisfying
\begin{equation*} \sum_{ \boldsymbol\omega\in\Ga} \mu_{\boldsymbol\omega}
|f(\boldsymbol\omega)|^{p_0} \leq C_1 \int_{\sph} |f(\x)|^{p_0}\,
d\sz_d(\x),\ \ \forall f\in \Pi_{N}^{d+1},
\end{equation*} for some $0<p_0<\infty$ and some positive integer $N$. If
 $0< q<\infty$, $M\ge
N$ and $f\in \Pi_M^{d+1}$, then
\begin{equation*} \sum_{ \boldsymbol\omega\in \Ga}
\mu_{\boldsymbol\omega} |f(\boldsymbol\omega)|^q \leq C C_1 \( \f
MN\)^{d} \int_{\sph} |f(\y)|^q \, d\sz_d(\y),\end{equation*} where
$C>0$ depends only on $d$ and $q$.
\end{lem}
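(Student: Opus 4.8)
The plan is to deduce the $(M,q)$ inequality from the $(N,p_0)$ hypothesis through two local ingredients: a pointwise local Nikolskii estimate that bounds $|f(\boldsymbol\omega)|^q$ by a normalized $L_q$-average of $f$ over a geodesic cap of radius $\asymp 1/M$, and a cap-measure bound extracted from the hypothesis that controls the total mass of $\mu=\sum_{\boldsymbol\omega\in\Ga}\mu_{\boldsymbol\omega}\delta_{\boldsymbol\omega}$ on every cap of radius $\asymp 1/N$. Combining them by interchanging summation and integration turns the discrete sum into an integral of $|f|^q$ against the weight $\boldsymbol y\mapsto\mu(B(\boldsymbol y,c/M))$, and the hypothesis $M\ge N$ is exactly what lets me absorb that weight into the factor $(M/N)^d$.

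First I would record the local Nikolskii inequality: there are $c=c(d)$ and $C=C(d,q)$ so that for every $f\in\Pi_M^{d+1}$ and every $\boldsymbol\omega\in\sph$, writing $B=B(\boldsymbol\omega,c/M)$ for the cap of radius $c/M$,
\begin{equation*}
|f(\boldsymbol\omega)|^q\le \f{C}{\sz_d(B)}\int_{B}|f(\x)|^q\,d\sz_d(\x)\lesssim_{d,q} M^d\int_{B(\boldsymbol\omega,c/M)}|f(\x)|^q\,d\sz_d(\x),
\end{equation*}
using $\sz_d(B)\asymp M^{-d}$. This is proved in the standard way, comparing the value $f(\boldsymbol\omega)$ with a local $L_q$ average through a Bernstein/Remez argument on the cap, or equivalently by reproducing $f$ against a highly localized polynomial kernel $K_M$ of degree $\le M$ whose mass concentrates on $B$ and which decays like $(1+M\,\mathrm{dist}(\x,\boldsymbol\omega))^{-S}$.

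Second I would extract the cap-measure bound from the hypothesis. For fixed $\boldsymbol\omega$, choose a nonnegative bump $g_{\boldsymbol\omega}\in\Pi_N^{d+1}$ with $g_{\boldsymbol\omega}\gtrsim 1$ on $B(\boldsymbol\omega,1/N)$ and $\|g_{\boldsymbol\omega}\|_{L_{p_0}}^{p_0}\lesssim N^{-d}$ (again a localized filtered kernel of degree $\le N$ with sufficiently fast decay). Feeding $g_{\boldsymbol\omega}$ into the hypothesis gives
\begin{equation*}
\mu\bigl(B(\boldsymbol\omega,1/N)\bigr)\lesssim\sum_{\boldsymbol\omega'\in\Ga}\mu_{\boldsymbol\omega'}|g_{\boldsymbol\omega}(\boldsymbol\omega')|^{p_0}\le C_1\|g_{\boldsymbol\omega}\|_{L_{p_0}}^{p_0}\lesssim C_1N^{-d},
\end{equation*}
and covering a cap of radius $c/N$ by $O_d(1)$ caps of radius $1/N$ upgrades this to $\mu(B(\boldsymbol\omega,c/N))\lesssim C_1N^{-d}$ uniformly in $\boldsymbol\omega$.

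Finally I would assemble the pieces. Summing the local Nikolskii bound against $\mu$ and swapping sum and integral yields
\begin{align*}
\sum_{\boldsymbol\omega\in\Ga}\mu_{\boldsymbol\omega}|f(\boldsymbol\omega)|^q
&\lesssim_{d,q} M^d\int_{\sph}|f(\y)|^q\Bigl(\sum_{\boldsymbol\omega\in\Ga}\mu_{\boldsymbol\omega}\mathbf 1_{B(\boldsymbol\omega,c/M)}(\y)\Bigr)d\sz_d(\y)\\
&=M^d\int_{\sph}|f(\y)|^q\,\mu\bigl(B(\y,c/M)\bigr)\,d\sz_d(\y),
\end{align*}
where I used $\y\in B(\boldsymbol\omega,c/M)\iff\boldsymbol\omega\in B(\y,c/M)$. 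Since $M\ge N$ gives $B(\y,c/M)\subseteq B(\y,c/N)$, the cap bound yields $\mu(B(\y,c/M))\lesssim C_1N^{-d}$ uniformly in $\y$; pulling this constant out leaves $C\,C_1(M/N)^d\int_{\sph}|f|^q\,d\sz_d$, as claimed. The main obstacle is the analysis of the localized polynomials underlying both lemmas: producing the local Nikolskii constant depending only on $d,q$ and the cap-mass constant that is uniform over the admissible range of $p_0$ both rest on the sharp decay estimate $|K_L(\x\cdot\y)|\lesssim L^d(1+L\,\mathrm{dist}(\x,\y))^{-S}$ for filtered kernels, which is where the genuine work lies.
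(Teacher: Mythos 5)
The paper does not prove this lemma at all: it is quoted from Dai \cite[Theorem 2.1]{D2}, so there is no internal proof to compare against. Your argument is correct and is, in substance, the standard proof of that theorem (and of its analogues in the Dai--Xu book): a local Nikolskii/Remez estimate at scale $1/M$ for polynomials in $\Pi_M^{d+1}$, a cap-mass bound $\mu\bigl(B(\y,c/N)\bigr)\lesssim C_1N^{-d}$ extracted from the hypothesis by testing it on localized kernel bumps of degree $N$, and an interchange of sum and integral, with $M\ge N$ yielding precisely the factor $(M/N)^d$; all three steps rest, as you say, on the decay estimate for filtered kernels. Two small points deserve attention. First, in the local step you should either use the kernel-decay form $|f(\boldsymbol\omega)|^q\lesssim M^d\int_{\sph}|f(\y)|^q(1+M\,\mathrm{dist}(\boldsymbol\omega,\y))^{-S}d\sz_d(\y)$ (for $q<1$ this needs the known bootstrap argument, not just H\"older) or allow an enlarged cap in the Remez-type inequality; either variant feeds into your Fubini step unchanged, with the polynomially decaying tail handled by the same dyadic cap-mass bound. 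Second, your construction of the bump $g_{\boldsymbol\omega}$ with $\|g_{\boldsymbol\omega}\|_{L_{p_0}}^{p_0}\lesssim N^{-d}$ requires kernel decay of order $S>d/p_0$, so for small $p_0$ you must pass to powers of kernels of proportionally lower degree and the resulting constant degrades as $p_0\to0$; thus your proof literally gives $C=C(d,q,p_0)$ rather than the stated $C=C(d,q)$. Since $p_0$ is fixed in the hypothesis, and $p_0=q=2$ in the only application made in this paper (the estimate \eqref{6.5} in the proof of Theorem \ref{thm1.5}), this discrepancy is harmless, but it is worth flagging if you want the statement verbatim.
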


\noindent{\it Proof of Theorem \ref{thm1.5}}\

By \eqref{2.33-4} it suffices to give the upper bounds for
$\|f-L_N^{\Bbb S}(f)\|_{L_2}$. For $f\in {\rm H}^{\az,\beta}(\ss)$
with $\az>d/2$ or $\az=d/2,\,\beta>1/2$, we define
$$\sigma_1(f)=S_{2}(f), \ \ \sigma_j(f)=S_{2^{j}}(f)-S_{2^{j-1}}(f),\ \
{\rm for}\ j\ge2.$$Then $\sigma_j(f)\in\Pi_{2^{j}}^{d+1}$ and
$$f=\sum_{j=1}^\infty\sigma_j(f)$$ converges  in the uniform norm.

For a positive integer $N$, let  $m$ be an integer such that
$2^m\le N<2^{m+1}$. Observe that for any $f\in{\rm
H}^{\az,\beta}(\ss)$
\begin{align}\label{6.1}
  \|f-L_N^{\Bbb S}(f)\|_{L_{2}}&\le
  \|f-S_{2^{m}}(f)\|_{L_{2}}+\|L_{N}^{\Bbb S}(f)-S_{2^{m}}(f)\|_{L_2}\notag
  \\&=:{\rm I}+{\rm II}.
\end{align}

To estimate  ${\rm I}=\|f-S_{2^{m}}(f)\|_{L_2}$, we can use
\eqref{2.5-1} to obtain
\begin{align}\label{6.2}
  {\rm I}\ &\lesssim   \left(2^{m}\right)^{-\alpha}\ln^{-\beta}(1+2^{m})\,\|f\|_{{\rm H}^{\az,\beta}(\ss)}\notag
  \\&\lesssim N^{-\alpha}\ln^{-\beta}N\,\|f\|_{{\rm H}^{\az,\beta}(\ss)}.
\end{align}

To estimate $\text{II} = \|L_N^{\Bbb S}(f)-S_{2^{m}}(f)\|_{L_2}$,
we can use \eqref{2.33-1} and \eqref{2.33-3} with
$S_{2^m}(f)\in\Pi_N^{d+1}$ to obtain that
\begin{align}
  {\rm II}\ &\le  A^{-1/2}\|L_N^{\Bbb S}(f)-S_{2^{m}}(f)\|_{(N)}
  \notag\\&\le A^{-1/2}\{\|L_N^{\Bbb S}(f)-f\|_{(N)}+\|f-S_{2^{m}}(f)\|_{(N)}\}
  \notag\\&\le 2 A^{-1/2}\|f-S_{2^{m}}(f)\|_{(N)},\label{6.3}
\end{align}where we use the fact that $L_N^{\Bbb S}(f)-S_{2^{m}}(f)\in \Pi_N^{d+1}$.
Since for all $f\in {\rm H}^{\az,\beta}(\ss)$, the series
$\sum_{j=m+1}^\infty\sigma_j(f)$ converges to $f-S_{2^{m}}(f)$ in
the uniform norm, we have
 \begin{equation}\label{6.4}
 \|f-S_{2^{m}}(f)\|_{(N)}\le \sum\limits_{j=m+1}^\infty\|
  \sigma_j(f)\|_{(N)}.
 \end{equation}

For $j\ge m+1$, using Lemma \ref{lem5.1} with $p_0=q=2$ we get
\begin{align}\label{6.5}
\| \sigma_j(f) \|_{(N)}^2 &= \sum_{k=1}^{l_N} \tau_{N,k} |
\sigma_j(f)(\mathbf{x}_{N,k})|^2 \lesssim B {2^{jd}}N^{-d} \|
\sigma_j(f) \|_{L_2}^2.
\end{align}
Applying \eqref{6.3}, \eqref{6.4},  \eqref{6.5}, the H\"older
inequality, and \eqref{2.8} we have
\begin{align}\label{6.6}
  {\rm II}&\le A^{-1/2}\sum_{j=m+1}^\infty \|\sigma_j(f)\|_{(N)}\notag\\&\lesssim \kappa^{1/2} N^{-d/2}\sum_{j=m+1}^\infty{2^{jd/2}} \|
\sigma_j(f) \|_{L_2}\notag
  \\&\lesssim \kappa^{1/2} N^{-d/2}\left(\sum_{j=m+1}^\infty2^{2j\az}j^{2\beta}\|
\sigma_j(f) \|_{L_2}^2\right)^{1/2}\
\left(\sum_{j=m+1}^\infty2^{-2j\az+jd}j^{-2\beta}\right)^{1/2}
\notag
   \\&\lesssim \kappa^{1/2} N^{-d/2}\left(\sum_{j=m+1}^\infty2^{-2j\az+jd}j^{-2\beta}\right)^{1/2}\ \|f\|_{{\rm H}^{\az,\beta}(\ss)}\notag
  \\&\lesssim \left\{\begin{aligned}&\kappa^{1/2} N^{-d/2}m^{-\beta+1/2} \ \|f\|_{{\rm H}^{\az,\beta}(\ss)}, &&\alpha=d/2,\,\beta>1,
  \\&\kappa^{1/2} N^{-d/2}m^{-\beta}2^{-(\alpha-d/2) m}\ \|f\|_{{\rm H}^{\az,\beta}(\ss)}, &&\alpha>d/2,\ \beta\in \Bbb R,
  \end{aligned}\right.\notag
  \\& \lesssim \kappa^{1/2} C_{\az,\beta}(N)  \|f\|_{{\rm H}^{\az,\beta}(\ss)} \end{align}
 Therefore, it follows from \eqref{6.1}, \eqref{6.2}, and
\eqref{6.6} that
$$\|f-L_N^{\Bbb S}(f)\|_{L_{2}}\lesssim (1+\kappa^{1/2}) C_{\az,\beta}(N)  \|f\|_{{\rm H}^{\az,\beta}(\ss)},$$
where
$$C_{\az,\beta}(N):=\bigg\{\begin{array}{ll} N^{-\az}\ln^{-\beta}N,\ \ & \az>d/2, \\
N^{-\az}\ln^{-\beta+1/2}N, \ &\az=d/2, \beta>1/2.\end{array}$$

This completes the proof of Theorem \ref{thm1.5}. $\hfill\Box$

\

 \noindent{\bf Acknowledgment}   The authors  were
supported by the National Natural Science Foundation of China
(Project no. 12371098).

\end{document}